\newtheorem{theorem}{Theorem}[section]
\newtheorem{prop}[theorem]{Proposition}
\newtheorem{lemma}[theorem]{Lemma}
\theoremstyle{definition}
\newtheorem{definition}[theorem]{Definition}
\newtheorem{example}[theorem]{Example}
\newtheorem{remark}[theorem]{Remark}
\newtheorem{conjecture}[theorem]{Conjecture}
\newtheorem{claim}[theorem]{Claim}
\renewcommand{\rm}{\mathrm}
\newcommand{\Spec}{\mathrm{Spec} \,}
\newcommand{\eps}{\epsilon}
\newcommand{\vp}{\varphi}
\newcommand{\bA}{{\mathbf A}}
\newcommand{\bB}{{\mathbf B}}
\newcommand{\bC}{{\mathbf C}}
\newcommand{\bF}{{\mathbf F}}
\newcommand{\bG}{{\mathbf G}}
\newcommand{\bH}{{\mathbf H}}
\newcommand{\bQ}{{\mathbf Q}}
\newcommand{\bT}{{\mathbf T}}
\newcommand{\bZ}{{\mathbf Z}}
\newcommand{\cO}{\mathcal{O}}
\newcommand{\cT}{\mathcal{T}}
\newcommand{\sD}{{\mathscr D}}
\newcommand{\sU}{{\mathscr U}}
\newcommand{\fm}{\mathfrak{m}}
\newcommand{\fp}{\mathfrak{p}}
\newcommand{\fq}{\mathfrak{q}}
\newcommand{\bGm}{\bG_{\rm{m}}}
\newcommand{\LG}{\prescript{L}{}{G}}
\newcommand{\Lvp}{\prescript{L}{}{\varphi}}
\DeclareMathOperator{\Ad}{Ad}
\DeclareMathOperator{\chara}{char}
\DeclareMathOperator{\coker}{coker}
\DeclareMathOperator{\diag}{diag}
\DeclareMathOperator{\Frac}{Frac}
\DeclareMathOperator{\GL}{GL}
\DeclareMathOperator{\GSp}{GSp}
\DeclareMathOperator{\Hom}{Hom}
\DeclareMathOperator{\im}{im}
\DeclareMathOperator{\Out}{Out}
\DeclareMathOperator{\PGL}{PGL}
\DeclareMathOperator{\QCoh}{QCoh}
\DeclareMathOperator{\Rep}{Rep}
\DeclareMathOperator{\SL}{SL}
\DeclareMathOperator{\Tor}{Tor}
\DeclareMathOperator{\tr}{tr}
\DeclareMathOperator{\uHom}{\underline{\Hom}}
\newcommand{\wtil}[1]{\wtil}
\newcommand{\ov}[1]{\overline{#1}}
\title{Connected components of the moduli space of L-parameters}
\author{Sean Cotner}
\begin{document}

\begin{abstract}
Recently, in order to formulate a categorical version of the local Langlands correspondence, several authors have constructed moduli spaces of $\bZ[1/p]$-valued L-parameters for $p$-adic groups. The connected components of these spaces over various $\bZ[1/p]$-algebras $R$ are conjecturally related to blocks in categories of $R$-representations of $p$-adic groups. Dat-Helm-Kurinczuk-Moss described the components when $R$ is an algebraically closed field and gave a conjectural description when $R = \ov{\bZ}[1/p]$. In this paper, we prove a strong form of this conjecture applicable to any integral domain $R$ over $\ov{\bZ}[1/p]$.
\end{abstract}

\maketitle

\section{Introduction}

Let $p$ be a prime number, let $F$ be a nonarchimedean local field of residue characteristic $p$, and let $G$ be a connected reductive group over $F$. Since its inception, one of the primary goals in the Langlands program has been to understand the smooth $\bC$-representations of $G(F)$. Recently, in order to study congruences of smooth representations, there has been considerable interest in the category $\Rep_R(G(F))$ of $R$-representations, where $R$ is any commutative ring, e.g., $R = \ov{\bZ}_\ell$; see \cite{Vigneras-ICM} for a recent and extensive guide to the literature. We will only consider the case that $p$ is invertible in $R$, and to avoid rationality issues we will assume further that $R$ is a $\ov{\bZ}[1/p]$-algebra. \smallskip

When studying any category of representations, a fundamental issue is to understand the blocks, i.e., the indecomposable direct factors. If $R = \bC$, the blocks of $\Rep_R(G(F))$ were determined by Bernstein \cite{Bernstein-Deligne} in terms of (super)cuspidal support. If $R = \ov{\bF}_\ell$, $\ell \neq p$, it is expected that a similar block decomposition holds, and there has been a good deal of recent work in this direction, for instance \cite{Secherre-Stevens-block}, \cite{Drevon-Secherre}, \cite{Cui-support}. When $R = W(\ov{\bF}_\ell)$ for $\ell \neq p$ and $G = \GL_n$, Helm established a similar block decomposition in \cite{Helm1}, \cite{Helm2}, \cite{Helm3}. Understanding these blocks for inner forms of $\GL_n$ has led to new results on the Jacquet--Langlands correspondence in \cite{Secherre-Stevens}.\smallskip

This paper concerns the ``dual side" of this question. To describe it, let $W_F$ be the Weil group of $F$, which acts on the Langlands dual $\hat{G}$ over $\bZ[1/p]$. Recently, \cite{DHKM} and \cite{Zhu} have constructed a space $\underline{\rm{Z}}^1(W_F^0, \hat{G})$ consisting (roughly) of 1-cocycles $\vp\colon W_F \to \hat{G}$ over $\bZ[1/p]$, or, equivalently, L-homomorphisms $\Lvp\colon W_F \to \LG \coloneqq \hat{G} \rtimes W_F$.\footnote{Because of the incompatibility of the $\ell$-adic and $\ell'$-adic topologies for $\ell \neq \ell'$, it is not totally clear how to make this definition functorially. In the approach of \cite{DHKM}, which we recall in Section~\ref{section:example}, this is achieved by replacing $W_F$ by a dense subgroup $W_F^0$ with ``discretized inertia".} Recent categorical versions of the local Langlands correspondence as in \cite{Hellmann}, \cite{BZCHN}, \cite{Zhu}, and \cite{Fargues-Scholze} suggest that there should be a fully faithful embedding of $\Rep_R(G(F))$ into $\QCoh([\underline{\rm{Z}}^1(W_F^0, \hat{G})/\hat{G}]_R)$. The latter category breaks up into a product indexed by the components of $\underline{\rm{Z}}^1(W_F^0, \hat{G})_R$, so one expects that each block of $\Rep_R(G(F))$ can be associated to such a component.\smallskip

Focusing on a single prime, i.e., taking $R = \ov{\bF}_\ell$ or $\ov{\bZ}_\ell$ for $\ell \neq p$, the connected components of $\underline{\rm{Z}}^1(W_F^0, \hat{G})_R$ were determined in \cite[4.8]{DHKM} in terms of restriction to prime-to-$\ell$ inertia. On the other end of the spectrum, instead of considering a single prime $\ell \neq p$, one can consider all primes other than $p$ at the same time by taking $R = \ov{\bZ}[1/p]$. Representations of finite $p$-groups do not deform away from characteristic $p$, so restriction to the wild inertia subgroup $P_F$ is a natural discrete invariant of $\underline{\rm{Z}}^1(W_F^0, \hat{G})_{\bZ[1/p]}$. In fact, \cite[\S 4.1]{DHKM} exhibits a decomposition
\begin{equation}\label{eqn:z-1/p-decomp}
\underline{\rm{Z}}^1(W_F^0, \hat{G})_{\ov{\bZ}[1/p]} = \coprod_{\substack{\phi \in \Phi \\ \alpha \in \Sigma(\phi)}} \underline{\rm{Z}}^1(W_F^0, \hat{G})_{\phi, \alpha},
\end{equation}
where membership of a cocycle $\vp$ in $\underline{\rm{Z}}^1(W_F^0, \hat{G})_{\phi, \alpha}$ is determined by the restriction $\phi$ of $\vp$ to $P_F$ and a map $\alpha$ from $W_F$ to the component group of a twisted centralizer of $\prescript{L}{}{\phi}$ in $\LG$. We will recall the precise definitions in a self-contained way in Section~\ref{ss:motivation}.

\begin{conjecture}\cite[4.3]{DHKM}, \cite[3.10]{Dat-notes}\label{conj:4.3}
    Each $\ov{\bZ}[1/p]$-scheme $\underline{\rm{Z}}^1(W_F^0, \hat{G})_{\phi, \alpha}$ is connected.
\end{conjecture}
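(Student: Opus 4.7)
The strategy is to rigidify the problem by fixing the restriction of a cocycle to the pro-$p$ subgroup $P_F$, then to express the resulting moduli as a closed subscheme of $C \times C$ (where $C$ is the twisted centralizer of the fixed restriction) cut out by a single cocycle relation, and finally to deduce connectedness from the theory of twisted conjugacy classes in the connected reductive group $C^\circ$.

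\emph{Rigidification and presentation.} Since $P_F$ is pro-$p$ and $p$ is invertible in any $\ov{\bZ}[1/p]$-algebra $R$, a Schur--Zassenhaus-type vanishing argument shows that every $R$-point of $\underline{\rm{Z}}^1_\phi$ is $\hat{G}(R)$-conjugate to a cocycle whose restriction to $P_F$ equals a chosen representative $\phi_0\colon P_F \to \hat{G}(\ov{\bZ}[1/p])$, and that two such are $C(R)$-conjugate, where $C := Z_{\hat{G}}(\phi_0)$. Let $\underline{E}_\phi$ be the closed subscheme of $\underline{\rm{Z}}^1(W_F^0, \hat{G})$ parameterizing cocycles whose restriction to $P_F$ is exactly $\phi_0$. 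The induced map $\hat{G} \times \underline{E}_\phi \to \underline{\rm{Z}}^1_\phi$ is smooth and surjective, so connectedness of $\underline{\rm{Z}}^1_{\phi,\alpha}$ reduces to connectedness of the corresponding locus $\underline{E}_{\phi,\alpha} \subseteq \underline{E}_\phi$. Because $W_F^0/P_F$ is generated (possibly after a finite extension) by a Frobenius $\sigma$ and a discretized tame generator $\tau$ subject to a twisted-braid relation, $\underline{E}_\phi$ embeds as the closed subscheme of $C \times C$ cut out by a single polynomial equation, and the datum $\alpha \in \Sigma(\phi)$ selects the components of $\pi_0(C)$ in which the images of $\sigma$ and $\tau$ are required to lie.

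\emph{Connectedness argument.} Working first over $\ov{\bQ}$, I would project $\underline{E}_{\phi,\alpha} \subset C \times C$ to the $\tau$-coordinate. For fixed $\tau$, the fiber is a $C^\circ$-orbit inside $C$ (a $\sigma$-twisted conjugacy class), hence geometrically connected. The image of the projection is the locus, inside the chosen component of $C$, where $\tau$ and $\tau^q$ are $\sigma$-twisted $C^\circ$-conjugate; this is $C^\circ$-stable, contains an open dense regular semisimple part parameterized by the corresponding twisted Cartan quotient, and is connected by a Steinberg cross-section argument. Assembling connected fibers over a connected base gives connectedness of the geometric generic fiber of $\underline{\rm{Z}}^1_{\phi,\alpha}$. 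Flatness of $\underline{\rm{Z}}^1(W_F^0, \hat{G})$ over $\ov{\bZ}[1/p]$ (from \cite{DHKM}) then upgrades this to connectedness of the total scheme over $\ov{\bZ}[1/p]$, since every irreducible component of $\underline{\rm{Z}}^1_{\phi,\alpha}$ dominates the connected base.

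\emph{Main obstacle.} The subtlest ingredient is matching the $\alpha$-datum with the component structure of the relation locus in $C \times C$: $\Sigma(\phi)$ parameterizes cohomology classes of $W_F$ valued in $\pi_0(C)$ with its natural outer action, and one must verify via a Kottwitz-type calculation that fixing a single $\alpha$ singles out exactly one connected component of the relation subscheme in $C \times C$, rather than several. This is genuinely delicate for non-split $\hat{G}$ or intricately ramified $\phi$, but once this identification is established the rest of the argument is essentially formal.
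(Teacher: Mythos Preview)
Your proposal has a fundamental gap: the geometric generic fiber $\underline{\rm{Z}}^1_{\phi,\alpha}\otimes\ov{\bQ}$ is \emph{not} connected in general, so the strategy ``prove connectedness over $\ov{\bQ}$, then use flatness'' cannot work. Already for $\hat{G}=\SL_2$ with trivial $W_F$-action, the locus of $\tau\in\SL_2$ with $\tau$ conjugate to $\tau^q$ forces the eigenvalues of $\tau$ to lie in $\mu_{q-1}\cup\mu_{q+1}$; over $\ov{\bQ}$ this is a finite disjoint union of semisimple conjugacy classes (plus their unipotent thickenings), hence disconnected. Your ``Steinberg cross-section'' claim for the image of the $\tau$-projection is therefore false: the cross-section parameterizes \emph{all} of the adjoint quotient, but here you land in a proper, typically very disconnected, closed subscheme. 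The paper emphasizes exactly this point in the introduction: the content of the conjecture is that the many components over $\bC$ ``conspire to connect modulo the primes $\ell\neq p$''.

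The paper's proof shares your first two steps (tame reduction to $C^\circ$, then projecting to the $s$-coordinate and further to the twisted adjoint quotient $\hat{G}/\!/_s\hat{G}$), and the fibers of that projection are shown to be connected via complete reducibility and Steinberg's theorem on connected centralizers---close in spirit to your fiber argument. The essential new ingredient you are missing is the analysis of the \emph{image}: it is a finite flat $\ov{\bZ}[1/p]$-scheme covered by closed subschemes $C_w$ indexed by the $s$-fixed Weyl group $W_0$, each $C_w$ connected over $\ov{\bZ}[1/p]$ (being a torsor for a finite multiplicative group of order prime to $p$), and the real work is to exhibit, for every pair $w,w'$, a chain $C_{w_0},\dots,C_{w_N}$ with nonempty intersections at well-chosen residue characteristics. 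Outside type~A a single prime suffices; in type~A (e.g.\ $\SL_6$, $p=7$) no single prime works and one needs a genuinely arithmetic chain argument, carried out in the paper's Appendix~A. This integral, prime-by-prime gluing is the heart of the proof and is entirely absent from your plan.
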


\begin{theorem}\label{theorem:intro-MAIN}
    Conjecture~\ref{conj:4.3} is true.
\end{theorem}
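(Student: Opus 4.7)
My plan is to reduce Theorem~\ref{theorem:intro-MAIN} to two assertions about $X \coloneqq \underline{\rm{Z}}^1(W_F^0, \hat{G})_{\phi, \alpha}$: that $X$ is flat over $\ov{\bZ}[1/p]$, and that its generic fiber $X_{\ov{\bQ}}$ is connected. These suffice: since $X$ is affine, say $X = \Spec B$, flatness gives an injection $B \hookrightarrow B \otimes_{\ov{\bZ}[1/p]} \ov{\bQ}$, which preserves idempotents, so any disconnection of $X$ would descend to the generic fiber. The same argument after base change would yield the ``strong form'' over any integral domain extension of $\ov{\bZ}[1/p]$, provided $X_{\ov{\bQ}}$ is in fact geometrically connected.

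\textbf{Flatness.} The ambient moduli space $\underline{\rm{Z}}^1(W_F^0, \hat{G})$ is flat — indeed, a local complete intersection — over $\bZ[1/p]$, by the construction in \cite{DHKM} and \cite{Zhu}. Since $X$ is an open-closed subscheme of $\underline{\rm{Z}}^1(W_F^0, \hat{G})_{\ov{\bZ}[1/p]}$ by the decomposition~\eqref{eqn:z-1/p-decomp}, it inherits flatness.

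\textbf{Connectedness of the generic fiber.} This is the substantive step. Let $H_\phi \coloneqq C_{\hat{G}}(\phi(P_F))$; since $\phi(P_F)$ has $p$-power order and $p$ is invertible, $H_\phi$ is smooth over $\ov{\bZ}[1/p]$, with connected reductive identity component $H_\phi^0$. Fixing a finite presentation of $W_F^0 / P_F$ on tame generators $t_1, \ldots, t_r$ together with a Frobenius lift $F$, subject to the standard relations (commutativity of the $t_i$'s and $F t_i F^{-1} = t_i^q$), I would describe $X_{\ov{\bQ}}$ as a closed subscheme of a product of $H_\phi^0$-torsors — one factor per generator, constrained to lie in the $\alpha$-component of the corresponding transporter set in $\hat{G}$ — cut out by the cocycle equations translating these relations under $\vp$. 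The plan is then to exhibit this subscheme as an iterated fibration over $\ov{\bQ}$ with geometrically connected fibers, peeling off one generator at a time.

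\textbf{Main obstacle.} The crux is controlling these cocycle equations: the Frobenius relation $\vp(F) \cdot {}^F\vp(t_i) \cdot \vp(F)^{-1} = \vp(t_i)^q$ is nonlinear, and the commutativity constraints couple the $\vp(t_i)$'s. I expect an induction, fixing one $\vp(t_i)$ at a time and reducing to an analogous problem inside the smaller connected reductive centralizer $C_{H_\phi^0}(\vp(t_i))$. The Frobenius relation should be handled by a Jordan-decomposition analysis: the semisimple part of $\vp(t_i)$ varies in a torus (connected moduli), while the unipotent part can be deformed to the identity via the contracting $q$-th-power action of Frobenius on the unipotent radical. Ensuring that successive centralizers remain connected and reductive throughout the induction — which ultimately rests on $p$ being invertible and standard results on reductive group schemes — is the most delicate point.
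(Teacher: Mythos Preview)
Your reduction step---flat over $\ov{\bZ}[1/p]$ plus connected generic fiber implies connected---is logically sound, but the premise fails: the generic fiber $X_{\ov{\bQ}}$ is \emph{not} connected in general. The paper makes this explicit just after stating the conjecture, rephrasing it as the assertion that any two $\bC$-points of $X$ can be joined by a chain of components of $X_{\bC}$ that meet only after reduction modulo various primes $\ell \neq p$. The $\SL_2$ example in Section~\ref{section:example} already exhibits this: when $z=-1$ (so $q$ is odd), the image $\im\ov{\Sigma}$ over $\ov{\bQ}$ is supported on $\mu'_{2(q-1)}\cup\mu'_{2(q+1)}$, and these two pieces are disjoint in characteristic $0$; they meet only over $\ov{\bF}_2$. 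So the entire content of the theorem lives in the special fibers, and cannot be read off from the generic fiber.

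Your fibration sketch for $X_{\ov{\bQ}}$ contains a related error. You write that ``the semisimple part of $\vp(t_i)$ varies in a torus (connected moduli),'' but the Frobenius relation $\Lvp(\rm{Fr})\Lvp(s)\Lvp(\rm{Fr})^{-1} = \Lvp(s)^q$ forces the semisimple part of $\Lvp(s)$ to have eigenvalues lying in a \emph{finite} set of roots of unity; the moduli here is zero-dimensional, not a torus. This finite scheme is exactly the paper's $(\im\ov{\Sigma}_{D'})_R$, and proving its connectedness over $\ov{\bZ}[1/p]$ is the heart of the argument (Section~\ref{ss:eigenvalues} and Appendix~\ref{appendix:a}). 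The paper writes it as a union $\bigcup_{w\in W_0} C_w$ of connected pieces indexed by the Weyl group and then shows, prime by prime, that the $C_w$ are linked---with type A requiring a nontrivial chain (Example~\ref{example:type-A-good}). None of this is visible over $\ov{\bQ}$.
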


Conjecture~\ref{conj:4.3} was previously proved in \cite[4.30]{DHKM} when $Z(\hat{G})$ is smooth, which corresponds to the condition that $G$ has fundamental group (in the sense of Borovoi) with no torsion of order prime to $p$. This handles the case that $G$ is semisimple and simply connected, as well as some other groups like $G = \GL_n$ or $\GSp_{2n}$, but it says very little if, for instance, $G$ is of adjoint type. Indeed, when $G = \PGL_n$ for an integer $n$ which is divisible by at least two distinct primes, this condition does not hold for any $p$.\smallskip

Roughly, Conjecture~\ref{conj:4.3} describes the way in which components of $\underline{\rm{Z}}^1(W_F^0, \hat{G})_{\bC}$ conspire to connect modulo the primes $\ell \neq p$. In fact, it can equivalently be phrased as the statement that if two $\bC$-valued cocycles $\vp, \psi$ have the same restriction to $P_F$ and the same invariant $\alpha$ as above, then there is a sequence of primes $\ell_1, \dots, \ell_n \neq p$ and a sequence $\vp = \vp_0, \vp_1, \dots, \vp_n = \psi$ such that the components containing $\vp_i$ and $\vp_{i+1}$ in $\underline{\rm{Z}}^1(W_F^0, \hat{G})_{\bC}$ meet modulo $\ell_{i+1}$ for all $0 \leq i \leq n-1$. Under this interpretation, we note the similarity to \cite[Thm.\ B]{Secherre-Stevens}. \smallskip

As stated above, the cases $R = \ov{\bZ}[1/p]$ and $R = \ov{\bF}_{\ell}$ are in a sense at opposite extremes. We will interpolate between them by considering an arbitrary integral domain $R$ over $\ov{\bZ}[1/p]$. For simplicity, let us assume for now that $R = \ov{\bZ}[D^{-1}]$ for some multiplicatively closed subset $D \subset \bZ$. Reasoning as in the case $R = \ov{\bZ}[1/p]$ suggests that there should be a discrete invariant of $\underline{\rm{Z}}^1(W_F^0, \hat{G})_{\ov{\bZ}[D^{-1}]}$ coming from restriction to the maximal ``pro-$D$" subgroup of the inertia subgroup $I_F$. Because of topological issues alluded to above, this does not literally make sense, but nonetheless we construct in Section~\ref{section:tame-reduction} a decomposition
\begin{align}\label{align:main-decomp}
\underline{\rm{Z}}^1(W_F^0, \hat{G})_{\ov{\bZ}[D^{-1}]} = \coprod_{\substack{\phi_D \in \Phi_D \\ \alpha_D \in \Sigma(\phi_D)}} \underline{\rm{Z}}^1(W_F^0, \hat{G})_{\phi_D, \alpha_D}
\end{align}
where, informally, $\Phi_D$ is a set of cocycles from the maximal pro-$D$ subgroup of $I_F$ to $\hat{G}$ and $\Sigma(\phi_D)$ comes from the failure of the centralizer of $\prescript{L}{}{\phi_D}$ in $\hat{G}$ to be connected. The following theorem, which is slightly generalized in Theorem~\ref{theorem:main-general}, is our main result.

\begin{theorem}\label{theorem:intro-MAIN-main}
    Every summand in (\ref{align:main-decomp}) is connected.
\end{theorem}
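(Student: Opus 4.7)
The plan is to identify each summand in (\ref{align:main-decomp}) with a simpler cocycle moduli space via a centralizer reduction, establish fiberwise connectedness over $\Spec \ov{\bZ}[D^{-1}]$, and then upgrade to global connectedness using faithful flatness of the structure map.

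First, let $I^D \subset I_F$ denote the maximal pro-$D$ subgroup (or its discretized analogue inside $W_F^0$). Unwinding (\ref{align:main-decomp}) from Section~\ref{section:tame-reduction}, a point of $X := \underline{\rm{Z}}^1(W_F^0, \hat{G})_{\phi_D, \alpha_D}$ is a $1$-cocycle $\vp\colon W_F^0 \to \hat{G}$ whose restriction to $I^D$ is $\hat{G}$-conjugate to $\phi_D$ and whose Frobenius data realize $\alpha_D$. Fix a cocycle $\vp_0$ extending $\phi_D$; any $\vp \in X$ can be written $\vp(w) = c(w)\vp_0(w)$ for a cocycle $c\colon W_F^0 \to \hat{G}$ which is trivial on $I^D$ and takes values in the twisted centralizer $\hat{H}$ of the image of $\phi_D$. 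After fppf descent for $\hat{G}$-conjugation, this presents $X$ as an open-and-closed subscheme of $\underline{\rm{Z}}^1(W_F^0/I^D, \hat{H})_{\ov{\bZ}[D^{-1}]}$, cut out by a locally constant condition on $\pi_0(\hat{H})$-cohomology encoded by $\alpha_D$.

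Second, I would verify fiberwise connectedness of $X$. At a geometric point $s$ of $\Spec \ov{\bZ}[D^{-1}]$ of residue characteristic $\ell \notin D$ (so $\ell \neq p$, since $p \in D$), the quotient $W_F^0/I^D$ retains nontrivial pro-$\ell$ inertia, so cocycles into $\hat{H}$ over the residue field have deformation room coming from the pro-$\ell$ inertia generators. The strategy is to show that any two $s$-points of $X_s$ sharing the $\alpha_D$-label can be joined by an explicit chain of curves in $\underline{\rm{Z}}^1(W_F^0/I^D, \hat{H})_s$: for each prime-to-$D$ tame generator, one multiplies by a one-parameter family in $\hat{H}^\circ$, while $\alpha_D$, being built from $\pi_0(\hat{H})$, is locally constant along each such family. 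At the generic point (characteristic zero), connectedness is essentially classical.

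Finally, to pass from fiberwise to absolute connectedness, I would check that $X \to \Spec \ov{\bZ}[D^{-1}]$ is faithfully flat using the finite presentation of $\underline{\rm{Z}}^1(W_F^0, \hat{G})$ by equations in $\hat{G}^n$ together with a dimension count; combined with fiberwise connectedness and the connectedness of the base, this forces $X$ to be connected. The main obstacle will be the fiberwise step in residue characteristic $\ell \notin D$: it is precisely here that the interaction of the $\ell$-torsion of $Z(\hat{G})$ with $\pi_0(\hat{H})$ becomes visible, which is exactly what prevented the argument of \cite[4.30]{DHKM} from extending beyond the case of smooth center. Circumventing this will require a careful study of $\pi_0(\hat{H})$ as a module for $W_F/I^D$ together with an explicit $\alpha_D$-preserving construction of deformations in $\hat{H}$.
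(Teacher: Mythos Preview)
Your plan has a genuine gap at step~2, and it is fatal: the geometric fibers of $X = \underline{\rm{Z}}^1(W_F^0, \hat{G})_{\phi_D, \alpha_D}$ over $\Spec \ov{\bZ}[D^{-1}]$ are \emph{not} connected in general. In particular, your claim that ``at the generic point (characteristic zero), connectedness is essentially classical'' is false. Over $\ov{\bQ}$, the connected components of $\underline{\rm{Z}}^1(W_F^0, \hat{G})$ are indexed by the full semisimple part of the inertial restriction, which is strictly finer than the pair $(\phi_D, \alpha_D)$; this is exactly the content of the remark after Theorem~\ref{theorem:intro-MAIN} that components over $\bC$ ``conspire to connect modulo the primes $\ell \neq p$''. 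Already for $\hat{G} = \SL_2$ with trivial $W_F$-action and $D = \{\pm p^n\}$, Claim~\ref{claim:conn-image} shows that the image of $\ov{\Sigma}$ (and hence $X$ itself) has many components over $\ov{\bQ}$ which meet only at positive-characteristic fibers. So there is no hope of running ``connected fibers $+$ connected base $\Rightarrow$ connected total space''.

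The paper's argument is organized quite differently. After the centralizer reduction (your step~1, which matches Section~\ref{section:tame-reduction}), it does \emph{not} fiber over $\Spec \ov{\bZ}[D^{-1}]$ but instead constructs a map $\ov{\Sigma}$ from $X$ to a finite flat $\ov{\bZ}[D^{-1}]$-scheme $\im \ov{\Sigma}_{D'}$ recording the eigenvalues of a tame inertia generator. The fibers of $\ov{\Sigma}$ over points of $\im \ov{\Sigma}_{D'}$ are shown to be connected (Lemma~\ref{lemma:connected-fibers}, using complete reducibility and Steinberg's theorem on centralizers), and then purity (Lemma~\ref{lemma:pure-to-finite-connected}) reduces one to the connectedness of the finite scheme $(\im \ov{\Sigma}_{D'})_R$. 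The latter is the heart of the matter: one covers $\im \ov{\Sigma}_{D'}$ by connected pieces $C_w$ indexed by $W_0$ and must show that any two $C_w$, $C_{w'}$ can be linked by a chain meeting at well-chosen primes (Section~\ref{ss:eigenvalues}, Appendix~\ref{appendix:a}). This linking-at-primes phenomenon is precisely what your fiberwise strategy cannot see.
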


In \cite[\S 4.5]{Dat-notes}, one finds detailed conjectures on the blocks of $\Rep_{\ov{\bZ}[1/p]}(G(F))$ suggested by Conjecture~\ref{conj:4.3}. In particular, the components of $\underline{\rm{Z}}^1(W_F^0, \hat{G})_{\ov{\bZ}[1/p]}$ are expected to be in one-to-one correspondence with the so-called ``stable blocks" of $\Rep_{\ov{\bZ}[1/p]}(G(F))$, and there are precise expectations about the decomposition of each stable block into blocks. These conjectures have been partially confirmed for the subcategory of $\ov{\bZ}[1/p]$-representations of depth $0$ in \cite{Dat-Lanard}, following the works \cite{Lanard1}, \cite{Lanard2} which prove similar results for $\ov{\bZ}_{\ell}$-representations of depth $0$. It seems reasonable to expect that the conjectures of \cite[\S 4.5]{Dat-notes} can be extended to $\Rep_{\ov{\bZ}[D^{-1}]}(G(F))$ for any $D$. However, there are subtleties even when $R = \ov{\bZ}_{\ell}$, as noted in \cite[\S 4.7]{Dat-notes}. Thus we leave precise conjectures to the reader.\smallskip

Our proof of Theorem~\ref{theorem:intro-MAIN-main} is similar in several respects to the proof sketched in \cite[\S 5.4]{Dat-notes} in the case $R = \ov{\bZ}[1/p]$ and the action of $I_F$ preserves a pinning, but the details are somewhat different. In fact, to remain as self-contained as possible (and since they do not significantly simplify the argument), we do not rely on any previously-established connectedness results. Let us outline the proof in the case that $R = \ov{\bZ}[1/p]$ and the action of $W_F$ is inner, assumptions which allow us to avoid some technical complications while retaining the key features of the problem.
\begin{enumerate}
    \item (Section~\ref{section:tame-reduction}) Reduce to proving connectedness of $\underline{\rm{Z}}^1(W_F^0/P_F, \hat{G})_{\ov{\bZ}[1/p]}$ when the $W_F$-action on $\hat{G}$ is tame and preserves a Borel pair. (Since $R = \ov{\bZ}[1/p]$ in this outline, this reduction was already made in \cite{DHKM}.)
    \item (Section~\ref{ss:sc}) Analyze the behavior of $\underline{\rm{Z}}^1(W_F^0/P_F, \hat{G})_{\ov{\bZ}[1/p]}$ under certain homomorphisms $\hat{G} \to \hat{H}$ to reduce to the case that $\hat{G}$ is semisimple and simply connected.
    \item (Section~\ref{ss:fibral}) Define a map $\ov{\Sigma}\colon \underline{\rm{Z}}^1(W_F^0/P_F, \hat{G}) \to \hat{G}/\!/\hat{G}$ which, roughly, records the eigenvalues of a generator of tame inertia. Using ``purity" of $\underline{\rm{Z}}^1(W_F^0/P_F, \hat{G})$, reduce to the claims that $(\im \ov{\Sigma})_{\ov{\bZ}[1/p]}$ is connected, and for every field $k$ the fibers of $\ov{\Sigma}_k$ are connected.
    \item (Section~\ref{ss:qs}) Using the theory of complete reducibility and theorems of Steinberg, show that $\ov{\Sigma}^{-1}(x)$ is connected for every field $k$ and every $x \in (\im \ov{\Sigma})(k)$.
    \item (Section~\ref{ss:eigenvalues}, Appendix~\ref{appendix:a}) Prove that $(\im \ov{\Sigma})_{\ov{\bZ}[1/p]}$ is connected.
\end{enumerate}

Step (5) is the most interesting one. In Section~\ref{ss:fibral}, we write down connected closed subschemes $C_w$ of $(\im \ov{\Sigma})_{\ov{\bZ}[1/p]}$ indexed by elements $w$ of the Weyl group $W_0$ such that $(\im \ov{\Sigma})_{\ov{\bZ}[1/p]} = \bigcup_{w \in W_0} C_w$. This shows $\pi_0((\im \ov{\Sigma})_{\ov{\bZ}[1/p]}) = W_0/{\sim}$, where $\sim$ is the equivalence relation generated by the relations $w \sim w'$ if $C_w \cap C_{w'} \neq \emptyset$. If there is a field $k$ of characteristic $\neq p$ such that the action map $W_F \to \hat{G}^{\rm{ad}}_k$ lifts to a map $W_F \to \hat{G}_k$, then we exhibit a single element which lies in every $C_w(k)$, thus proving (5). In fact, this assumption is satisfied whenever the center $Z(\hat{G})$ is of order divisible by at most one prime $\neq p$, which holds in all simple types other than A. In type A, this argument can fail: Example~\ref{example:type-A} shows that when $\hat{G} = \SL_6$ and $p = 7$, there are two elements $w, w' \in W_0$ such that $C_w \cap C_{w'} = \emptyset$. This leads to more work in type A, which is handled in Appendix~\ref{appendix:a}. \smallskip

To aid the reader, steps (4) and (5) are described in the special case $\hat{G} = \SL_2$ in Section~\ref{section:example}. Section~\ref{section:prelim} contains recollections from the literature, as well as a few basic lemmas and a proof of the twisted Chevalley--Steinberg isomorphism over a general base scheme.

\subsection{Notation}\label{ss:intro-notation}

Throughout this paper, let $F$ be a nonarchimedean local field with residue field of order $q$, a power of the prime $p$. Let $W_F$ be the Weil group of $F$, let $K$ be a number field, let $D$ be a saturated multiplicatively closed subset of $\bZ$, and let $\hat{G}$ be a split reductive $\cO_K[D^{-1}]$-group scheme (in the sense of \cite[XIX, 2.7]{SGA3III}, i.e., with connected fibers) equipped with an action of $W_F$ which factors through a finite quotient $W$ and preserves a Borel pair $(\hat{B}, \hat{T})$. The definitions and notation surrounding $W_F^0$ and $\underline{\rm{Z}}^1(W_F^0, \hat{G})$ are given in Section~\ref{section:example} and retained past that section. Throughout, if $R$ is a ring, let $D_R$ denote the set of integers which are invertible in $R$. If $\fp$ is a prime ideal of $R$, let $k(\fp)$ denote the residue field of $R$ at $\fp$. If $H$ is a reductive $R$-group scheme, then $Z(H)$ denotes the center of $H$. If $\sigma$ is an automorphism of $H$, then we will use $\prescript{\sigma}{}{h}$ to denote the image of an element $h \in H(R)$ under $\sigma$.

\subsection{Acknowledgements}

I thank Robert Cass, Stephen DeBacker, and Alex Hazeltine for conversations which helped to clarify the ideas of this paper. I also thank the attendees of the Harish-Chandra seminar at the University of Michigan for allowing me to give a series of talks on this work and for asking many pertinent questions.

\section{An example}\label{section:example}

We begin by sketching the proof of Theorem~\ref{theorem:intro-MAIN} in the case $\hat{G} = \SL_2$, along the lines that we will use to prove the general case. This section is not logically necessary for the rest of the paper; we include it only in the hope that it will help orient the reader for the general argument. There are several features of $\SL_2$ which simplify the argument in this case, among which are that the group of outer automorphisms is trivial, the order of the center is only divisible by one prime, and the exponential map is defined on nilpotent matrices in arbitrary characteristic. \smallskip

For completeness, we briefly recall the construction of $\underline{\rm{Z}}^1(W_F^0, \hat{G})$ from \cite{DHKM}. Naive approaches run into thorny topological issues, and a major insight is that it is useful to introduce the subgroup $W_F^0 \subset W_F$ with \textit{discretized} inertia. Explicitly, there is a short exact sequence
\[
1 \to P_F \to W_F \to W_F/P_F \to 1
\]
where $P_F$ is the subgroup of wild inertia. Let $\rm{Fr}$ and $s$ be lifts of Frobenius and a topological generator of tame inertia, respectively, and let $(W_F/P_F)^0$ be the discrete subgroup of $W_F/P_F$ generated by $\rm{Fr}$ and $s$. We define $W_F^0$ to be the preimage of $(W_F/P_F)^0$ in $W_F$, and we topologize $W_F^0$ by declaring that $P_F$ is open with its usual profinite topology.\smallskip

We retain the notation of Section~\ref{ss:intro-notation}. Let $\underline{\rm{Z}}^1(W_F^0, \hat{G})$ be the moduli space of 1-cocycles $W_F^0 \to \hat{G}(R)$ with open kernel, where $R$ ranges over $\cO_K[1/p]$-algebras. By \cite[4.1]{DHKM}, this is a locally finite type syntomic $\cO_K[1/p]$-scheme which, when restricted to $\bZ_\ell \otimes \cO_K$-algebras for any $\ell \neq p$, parameterizes $\ell$-adically continuous L-parameters from $W_F$.\smallskip

By the reduction to tame parameters of \cite[\S 4.1]{DHKM}, it is fine to assume that the action of $W_F$ on $\SL_2$ is tame and only consider $\underline{\rm{Z}}^1(W_F^0/P_F, \SL_2)$. 
Because the group of outer automorphisms of $\SL_2$ is trivial, $\rm{Fr}$ and $s$ act by $\Ad(g)$ and $\Ad(t)$ for some $g, t \in \hat{T}(\cO_K[1/p])$. These are required to satisfy $gtg^{-1} = t^q z$ for some central element $z \in Z(\SL_2)(\cO_K[1/p])$. Moreover, if $q$ is even then we may and do pass from $t$ to $-1 \cdot t$ if necessary to assume $z = 1$. \smallskip

Let $\ov{\Sigma}\colon \underline{\rm{Z}}^1(W_F^0/P_F, \SL_2) \to \bA^1$ be the map given by sending a cocycle $\vp$ to $\tr(\vp(s)t)$. The strategy of the proof of Theorem~\ref{theorem:intro-MAIN} is to show that $\ov{\Sigma}$ has connected fibers and that $(\im \ov{\Sigma})_{\ov{\bZ}[1/p]}$ is connected, and then use simple point-set topology to show that $\underline{\rm{Z}}^1(W_F^0/P_F, \SL_2)$ is connected. We will punt the point-set topology arguments (which are precisely the same for $\SL_2$ as for all other groups) to the coming sections and only handle properties of $\ov{\Sigma}$ in this section.

\begin{claim}\label{claim:conn-fibers}
    $\ov{\Sigma}$ has geometrically connected fibers.
\end{claim}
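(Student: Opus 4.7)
My plan is to reduce the claim to an explicit algebraic question on $\SL_2$ by changing variables. A cocycle $\vp\colon W_F^0/P_F \to \SL_2(R)$ is determined by $\vp(\rm{Fr})$ and $\vp(s)$ subject to the single relation coming from $\rm{Fr} \cdot s \cdot \rm{Fr}^{-1} = s^q$ in $W_F^0/P_F$. Setting $u = \vp(s)\,t$ and $v = \vp(\rm{Fr})\,g$, a direct computation using $gtg^{-1} = t^q z$ reduces the cocycle relation to
\[
v u v^{-1} = u^q z,
\]
while the invariant becomes $\ov{\Sigma}(\vp) = \tr(u)$. Thus, for $x \in \bA^1(\bar k)$, the geometric fiber $\ov{\Sigma}^{-1}(x)_{\bar k}$ is identified with the subscheme $F_x \subset (\SL_2 \times \SL_2)_{\bar k}$ cut out by $\tr u = x$ and $v u v^{-1} = u^q z$.

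I would then analyze $F_x$ via the projection $\pi\colon F_x \to \SL_2$ to the $u$-coordinate. The image is the locus of $u$ with $\tr u = x$ such that $u$ is conjugate to $u^q z$ in $\SL_2$, and the fiber of $\pi$ over such a $u$ is a torsor under the centralizer $Z_{\SL_2}(u)$. In the generic case $x^2 \neq 4$, every $u$ with $\tr u = x$ is regular semisimple and the set of such $u$ is a single $\SL_2$-conjugacy class isomorphic to $\SL_2/T$, which is smooth, connected and $2$-dimensional; the condition ``conjugate to $u^q z$'' depends only on $x$. Hence if $F_x$ is nonempty, it is a torsor under the connected torus $T \cong \bG_{\rm{m}}$ over a connected base, so it is connected (it is the complement of the zero section in a line bundle over a connected variety).

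The delicate case is $x = \pm 2$, where $\{\tr u = x\}$ decomposes into the central element $(x/2)I$ and the principal unipotent orbit. If the central stratum lies in $F_x$, it contributes the connected variety $\{(x/2)I\} \times \SL_2$. Over a non-central unipotent $u$, the centralizer $Z_{\SL_2}(u) \cong \mu_2 \ltimes \bG_a$ is disconnected, and a direct computation with the normal form $u_0 = \begin{pmatrix} 1 & 1 \\ 0 & 1 \end{pmatrix}$ shows that the unipotent part $F_x^{\rm{unip}}$ decomposes into exactly two $\SL_2$-conjugation orbits (corresponding to the two solutions $v_0 = \diag(\pm\sqrt{q},\pm 1/\sqrt{q})$ of the cocycle equation, each with stabilizer $\mu_2$), each a connected $3$-dimensional variety. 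A one-parameter degeneration $u(\epsilon) = \begin{pmatrix} 1 & \epsilon \\ 0 & 1 \end{pmatrix}$ with $\epsilon \to 0$ shows that the closures of both orbits meet the central stratum, and a short case check confirms that whenever $F_x^{\rm{unip}}$ is nonempty, the central stratum is also nonempty (else the traces on the two sides of $vuv^{-1} = u^q z$ conflict in characteristic $\neq 2$). The central stratum therefore serves as a connected bridge joining the two unipotent orbits, so $F_x$ is connected.

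The main obstacle is precisely this boundary analysis at $x = \pm 2$: one must rule out the a priori possibility of a nonempty unipotent part with empty central stratum, and verify that the scheme-theoretic closure of each unipotent orbit really does meet the central stratum inside $F_x$. Once these points are secured, the generic bundle argument and the boundary gluing together yield the connectedness of every geometric fiber of $\ov{\Sigma}$.
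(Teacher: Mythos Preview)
Your argument is correct and shares its core idea with the paper's proof---both ultimately reduce to the connectedness of centralizers of semisimple elements in $\SL_2$---but the organization differs. The paper avoids your case split by uniformly deforming any cocycle to one with semisimple $\vp(s)t$: writing the Jordan decomposition $\vp(s)t = t'(1+N)$, one scales the nilpotent part via $\vp_c(s) = t'(1+cN)t^{-1}$ to obtain an $\bA^1$-family in $\ov{\Sigma}^{-1}(x)$ connecting $\vp$ to a cocycle with $\vp_0(s)t$ semisimple. This collapses your boundary analysis at $x = \pm 2$ into the single observation that $\Sigma^{-1}(C)$ (for $C$ the semisimple class) is connected, since the fiber over each point of $C$ is a torsor for a connected centralizer. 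Your approach instead treats the regular semisimple locus directly and then handles $x=\pm 2$ by bridging the unipotent components through the central stratum; your degeneration $u(\epsilon)\to I$ is essentially the paper's scaling $c\to 0$ viewed from the other side.

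One small imprecision: your claim that $F_x^{\rm{unip}}$ consists of ``exactly two $\SL_2$-conjugation orbits'' fails when $\chara k \mid q-1$, since then $u_0^q = u_0$ and the fiber over $u_0$ is all of $Z_{\SL_2}(u_0)$, giving infinitely many orbits. However, your actual conclusion---that $F_x^{\rm{unip}}$ has two connected components, one for each component of $Z_{\SL_2}(u_0)\cong \mu_2\times\bG_a$---remains valid, and the degeneration argument still connects both to the central stratum. So the proof goes through; just replace ``orbits'' by ``components'' there.
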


\begin{proof}
    Let $k$ be an algebraically closed field, and let $x \in (\im \ov{\Sigma})(k)$ be a point. By definition, there is some 1-cocycle $\vp\colon W_F^0/P_F \to \hat{G}(k)$ such that $\vp(s)t$ has eigenvalues $\alpha, \alpha^{-1}$. Let $\vp(s)t = t'(1+N)$ be the Jordan decomposition of $\vp(s)t$, where $N$ is nilpotent, and define a map $\vp_{-}\colon \bA_k^1 \to \ov{\Sigma}^{-1}(x)$ functorially via $\vp_c(\rm{Fr}) = \vp(\rm{Fr})$ and $\vp_c(s) = t'(1 + cN)t^{-1}$ for $c \in \bA_k^1$. The fact that $\vp_c$ is a cocycle for each $c$ can be checked directly using the defining relation $\rm{Fr} s \rm{Fr}^{-1} = s^q$. Note that $\vp_1 = \vp$ and that $\vp_0(s)t$ is semisimple.\smallskip

    Define $\Sigma\colon \underline{\rm{Z}}^1(W_F^0/P_F, \SL_2) \to \SL_2$ by $\Sigma(\vp) = \vp(s)t$, and note that $\tr^{-1}(x)$ contains a unique semisimple conjugacy class $C$: if $x = \alpha + \alpha^{-1}$ for some $\alpha \in k^\times$, then this is the conjugacy class of $\diag(\alpha, \alpha^{-1})$. By the previous paragraph, it suffices to show that $\Sigma^{-1}(C)$ is connected, and indeed it is enough to show that $\Sigma^{-1}(\diag(\alpha, \alpha^{-1}))$ is connected. However, using the defining relation for a cocycle, one checks that $\Sigma^{-1}(\diag(\alpha, \alpha^{-1}))$ is isomorphic to the centralizer of $\diag(\alpha^q, \alpha^{-q})$, which is either isomorphic to $\bGm$ or $\SL_2$ and in either case is connected.
\end{proof}

\begin{claim}\label{claim:conn-image}
    $(\im \ov{\Sigma})_{\ov{\bZ}[1/p]}$ is connected.
\end{claim}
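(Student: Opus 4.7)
The plan is to (i) write the cocycle condition explicitly in convenient variables, (ii) identify $(\im \ov{\Sigma})_{\ov{\bZ}[1/p]}$ with the image in $\bA^1$ of certain finite root-of-unity schemes, and (iii) establish connectedness by chaining together the geometric components of this image via collisions modulo various residue characteristics $\ell \neq p$.

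Setting $A \coloneqq \vp(s)\,t$ and $B \coloneqq \vp(\rm{Fr})\,g$, a short induction using $\vp(s^n) = \prod_{i=0}^{n-1} t^i \vp(s) t^{-i}$ gives $\vp(s^q) = A^q t^{-q}$, and substituting into the cocycle identity $\vp(\rm{Fr}) \cdot {}^{\rm{Fr}}\vp(s) = \vp(s^q) \cdot {}^{s^q}\vp(\rm{Fr})$ together with $gtg^{-1} = zt^q$ collapses it to the single equation $BAB^{-1} = zA^q$. So $(\im \ov{\Sigma})_{\ov{\bZ}[1/p]}$ is the scheme-theoretic image in $\bA^1$ of the locus $\{(A,B) \in \SL_2 \times \SL_2 : BAB^{-1} = zA^q\}$ under $(A,B) \mapsto \tr(A)$. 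Over an algebraically closed field, the eigenvalues $\lambda, \lambda^{-1}$ of a semisimple such $A$ satisfy either $\lambda = z\lambda^q$ or $\lambda = z\lambda^{-q}$, corresponding to the two elements of the Weyl group $W_0 = S_2$ matching the eigenvalues of $A$ with those of $zA^q$; by the same nilpotent deformation as in the proof of Claim~\ref{claim:conn-fibers}, the non-semisimple locus contributes no new traces. Thus $\im \ov{\Sigma} = C_1 \cup C_s$, where $C_1$ (resp.\ $C_s$) is the scheme-theoretic image in $\bA^1$ of the finite $\ov{\bZ}[1/p]$-scheme $\{\lambda : \lambda^{q-1} = z^{-1}\}$ (resp.\ $\{\lambda : \lambda^{q+1} = z^{-1}\}$) under $\lambda \mapsto \lambda + \lambda^{-1}$.

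Next, the plan is to show each $C_w$ is connected and that $C_1 \cap C_s \neq \emptyset$. The geometric components of $C_w$ over $\ov{\bZ}[1/p]$ are $V(x - (\zeta + \zeta^{-1}))$, one for each pair $\{\zeta, \zeta^{-1}\}$ of eligible roots of unity, and two such components $V(x-a), V(x-b)$ meet above a prime $\ell$ of $\ov{\bZ}[1/p]$ iff $\ell \mid a - b$ in $\ov{\bZ}[1/p]$. Whenever $\zeta_1/\zeta_2$ is an $\ell$-power root of unity with $\ell \mid q^2 - 1$ (automatically $\ell \neq p$, as $q$ is a $p$-power), we have $\zeta_1 \equiv \zeta_2 \pmod \ell$, so $a \equiv b \pmod \ell$ and the corresponding components meet above a prime over $\ell$. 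The Sylow decomposition $\mu_{q \mp 1} = \prod_{\ell \mid q \mp 1}\mu_{(q \mp 1)_\ell}$ therefore lets us link any two eligible $\lambda$'s (and, when $z = -1$, any two elements of the relevant coset of $\mu_{q \mp 1}$ inside $\mu_{2(q \mp 1)}$) by modifying one Sylow factor at a time, giving connectedness of $C_w$. As for the intersection: if $z = 1$, both $C_1$ and $C_s$ contain $V(x-2)$ (coming from $\zeta = 1$); if $z = -1$ (forcing $q$ odd), the equations $\lambda^{q \pm 1} = -1$ collapse to $\lambda^{q \pm 1} = 1$ over $\bar{\bF}_2$, so $C_{1,\bar{\bF}_2}$ and $C_{s,\bar{\bF}_2}$ both contain the image of $\lambda = 1$, producing a shared point above a prime over $2$.

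The main obstacle I anticipate is translating the set-theoretic ``collision modulo $\ell$'' into a genuine nonempty intersection of closed subschemes of $\bA^1_{\ov{\bZ}[1/p]}$ and from there inferring that $V(x-a)$ and $V(x-b)$ lie in the same connected component of $C_w$; this ultimately rests on the one-dimensionality of $\ov{\bZ}[1/p]$, which ensures that any $a - b$ divisible by some prime $\ell \neq p$ is a non-unit, so that $V(x-a) \cap V(x-b) = V(x-a,\, a-b)$ is a nonempty closed subscheme above $\ell$. Once this point-set bookkeeping is in place, the rest reduces to the elementary observation that a finite cyclic group is generated by its Sylow subgroups.
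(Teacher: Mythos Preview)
Your proposal is correct and follows essentially the same approach as the paper: reduce to the cocycle relation $BAB^{-1} = zA^q$, identify $\im\ov{\Sigma}$ with the image of $\mu_{q-1}\cup\mu_{q+1}$ (resp.\ their nontrivial cosets in $\mu_{2(q\mp1)}$ when $z=-1$) under $\lambda\mapsto\lambda+\lambda^{-1}$, observe each piece is connected, and show the two pieces meet at $\lambda=1$ (over $\ov{\bF}_2$ when $z=-1$). The only cosmetic difference is that the paper invokes the structural fact that a finite multiplicative type $\ov{\bZ}[1/p]$-group scheme of order prime to $p$ is connected, whereas you spell this out via a Sylow-by-Sylow collision argument; both are valid and yield the same conclusion.
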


\begin{proof}
    If $k$ is a field and $\vp$ is a cocycle $W_F^0/P_F \to \SL_2(k)$, then the relation $\Lvp(\rm{Fr})\Lvp(s)\Lvp(\rm{Fr})^{-1} = \Lvp(s)^q$ means $\prescript{\vp(\rm{Fr})g}{}{(\vp(s)t)}z^{-1} = (\vp(s)t)^q$. This restricts the possible eigenvalues of $\vp(s)t$ considerably: to see this, let $\{\alpha, \beta\}$ be the set of eigenvalues of $\vp(s)t$. If $z = 1$, then $\{\alpha, \beta\} = \{\alpha^q, \beta^q\}$, and hence $\alpha, \beta \in \mu_{q - 1}(k) \cup \mu_{q + 1}(k)$. If $z = -1$, then $\{\alpha, \beta\} = \{-\alpha^q, -\beta^q\}$, so $\alpha, \beta \in \mu_{2(q-1)}(k) \cup \mu_{2(q+1)}(k)$. Moreover, if $k$ is not of characteristic $2$, then $\alpha, \beta \not\in \mu_{q-1}(k) \cup \mu_{q+1}(k)$. These are, however, the only restrictions; for any $\alpha$ as above, it is straightforward to show that there is an L-parameter $\vp\colon W_F^0/P_F \to \SL_2(k)$ such that $\vp(s) = \diag(\alpha, \alpha^{-1})$. \smallskip

    If $z = 1$, then by the above paragraph, $\ov{\Sigma}$ factors (set-theoretically, at least) through a surjective map to the schematic image $Y_1$ of the map $\mu_{q-1} \cup \mu_{q+1} \to \bA^1$ given by $\alpha \mapsto \alpha + \alpha^{-1}$. The scheme $(Y_1)_{\ov{\bZ}[1/p]}$ is connected: indeed, since $\mu_{q-1}$ and $\mu_{q+1}$ meet at $1$, it suffices to note that any multiplicative type $\ov{\bZ}[1/p]$-group scheme which is finite of order prime to $p$ is connected. Next, suppose $z = -1$, so $q$ is odd. If $n \geq 1$, let $\mu'_{2n}$ be the nontrivial translate of the subgroup scheme $\mu_n$ in $\mu_{2n}$ over $\bZ[1/p]$. Again, the previous paragraph shows that $\ov{\Sigma}$ factors (set-theoretically) through a surjective map to the schematic image $Y_{-1}$ of the map $\mu'_{2(q-1)} \cup \mu'_{2(q+1)} \to \bA^1$ given by $\alpha \mapsto \alpha + \alpha^{-1}$. Over $\ov{\bQ}$ or $\ov{\bF}_{\ell}$ for $\ell \neq 2$, the schemes $\mu'_{2(q-1)}$ and $\mu'_{2(q+1)}$ do not meet, but they \textit{do} meet over $\ov{\bF}_2$ at the element $1$. Moreover, $\mu'_{2(q-1)}$ and $\mu'_{2(q+1)}$ are both connected. Thus we conclude as before that $Y_{-1}$ is connected.
\end{proof}

\section{Preliminaries}\label{section:prelim}

\subsection{Pure schemes}\label{ss:pure}

We will use the notion of \textit{purity} for a scheme morphism $X \to S$ from \cite[3.3.3]{Raynaud-Gruson}. The precise definition is not important in this paper, but we summarize some useful properties in the following lemma.

\begin{lemma}\label{lemma:purity-results}
    Let $R$ be a ring, and let $X$ be a flat finitely presented $R$-scheme.
    \begin{enumerate}
        \item If $X$ is $R$-pure and $R'$ is an $R$-algebra, then $X_{R'}$ is $R'$-pure.
        \item If $R$ is noetherian, then $X$ is $R$-pure if and only if, for every $R$-algebra $A$ which is a DVR, the scheme $X_A$ is $A$-pure.
        \item If $R$ is a field, then $X$ is $R$-pure.
        \item If $R$ is a complete DVR with uniformizer $\pi$ and $X$ is affine, then $X$ is $R$-pure if and only if the coordinate ring $R[X]$ is $\pi$-adically separated.
        \item If $R$ is a DVR with fraction field $K$ and residue field $k$ and $X$ is $R$-pure, then the Zariski closure of every irreducible component of $X_K$ meets $X_k$.
    \end{enumerate}
\end{lemma}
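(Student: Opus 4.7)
The plan is to verify each of the five items by direct appeal to the foundational results on pure morphisms in \cite[\S 3.3]{Raynaud-Gruson}, treating purity as a black box in accordance with the convention stated just before the lemma. The argument will be essentially bookkeeping, with no new technical content.

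For (1), I would observe that $R$-purity is defined by a fibral condition on associated points, which is manifestly stable under arbitrary base change; this is recorded as \cite[3.3.4]{Raynaud-Gruson}. For (2), the forward direction is an instance of (1), while the reverse direction is the valuative criterion for purity: when $R$ is noetherian, one reduces to the case where $R$ is local, and then to a DVR by choosing an appropriate generization of the closed point, an argument built into \cite[Thm 3.3.5]{Raynaud-Gruson}. For (3), when $R$ is a field the scheme $\Spec R$ has a single point with no nontrivial generizations, so the purity condition is vacuous.

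The remaining two items require slightly more unpacking. For (4), over a complete DVR $(R, \pi)$ with $X$ affine and flat of finite presentation, $R$-purity translates into the condition that the $\pi$-adic completion of the coordinate ring $R[X]$ does not introduce new associated primes; under completeness of $R$ this is equivalent to the canonical map $R[X] \to \widehat{R[X]}$ being injective, that is, to $R[X]$ being $\pi$-adically separated. This is the complete-local case of \cite[3.3.5]{Raynaud-Gruson}. For (5), when $R$ is a DVR the definition of purity unwinds to the statement that every associated point of the generic fiber $X_K$ has closure in $X$ meeting the special fiber $X_k$; since the generic points of the irreducible components of $X_K$ are associated, their Zariski closures in $X$ meet $X_k$, which is precisely (5).

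The only real obstacle is locating the precise statements in \cite{Raynaud-Gruson} and translating their notation to the forms used here; no substantial mathematical work is required, which is consistent with the author's treatment of purity as a technical input rather than a subject of study.
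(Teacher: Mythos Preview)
Your treatment of (1), (3), and (5) matches the paper's: these are read off from the definition or from the relevant stability result in \cite{Raynaud-Gruson}, with only minor differences in the precise reference numbers. For (2), note that the paper does not cite \cite{Raynaud-Gruson} at all but rather \cite[2.2]{Hom-schemes}; the valuative criterion you sketch is plausible, but it is not the content of \cite[3.3.5]{Raynaud-Gruson}, so your citation there is misplaced.

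The real issue is (4). Your argument asserts that purity ``translates into the condition that the $\pi$-adic completion of $R[X]$ does not introduce new associated primes,'' and that under completeness of $R$ this is equivalent to injectivity of $R[X] \to \widehat{R[X]}$. Neither of these equivalences is justified, and neither is what \cite[3.3.5]{Raynaud-Gruson} says: that result gives purity $\Leftrightarrow$ local projectivity of $R[X]$ as an $R$-module, not a statement about associated primes of the completion. The paper closes the gap between ``projective'' and ``$\pi$-adically separated'' by invoking Kaplansky's theorem \cite[Thm.\ 12]{Kaplansky}: since $R[X]$ is countably generated and torsion-free over the DVR $R$, it decomposes as a direct sum of rank $1$ modules, each of which is isomorphic to $R$ or to $K$; then $\pi$-adic separation is equivalent to there being no copies of $K$, i.e., to freeness. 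This structural input is what your argument is missing, and without it the chain of equivalences you wrote down does not go through.
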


\begin{proof}
    (1) is proved in \cite[3.3.7]{Raynaud-Gruson}. (3) and (5) follow immediately from the definition in \cite[3.3.3]{Raynaud-Gruson}. For (4), note that $R[X]$ is countably generated, so by \cite[Thm.\ 12]{Kaplansky} it is a direct sum of rank $1$ $R$-modules. In particular, $R[X]$ is $\pi$-adically separated if and only if it is $R$-free, so the result follows from \cite[3.3.5]{Raynaud-Gruson}. Finally, (2) is proved in \cite[2.2]{Hom-schemes}.
\end{proof}

The main point of purity for our purposes is that moduli spaces of L-parameters are pure.

\begin{lemma}\label{lemma:cocycles-pure}
    Let $K$ be a number field, and let $H$ be a split reductive group scheme over $\cO_K[1/p]$ equipped with a finite action of $W_F$. The scheme $\underline{\rm{Z}}^1(W_F^0, H)$ is flat and $\cO_K[1/p]$-pure.
\end{lemma}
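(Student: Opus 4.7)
Flatness is immediate from the fact that $\underline{\rm{Z}}^1(W_F^0, H)$ is syntomic over $\cO_K[1/p]$, which is established along the lines of \cite[4.1]{DHKM} (the proof there works for any split reductive group with a finite $W_F$-action). So the substantive content is the purity assertion.

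Purity is local on the source, and by the construction of \cite[\S 4.1]{DHKM} the scheme $\underline{\rm{Z}}^1(W_F^0, H)$ is covered by open finite-type subschemes parameterizing cocycles whose restriction to $P_F$ factors through a fixed finite quotient. I would fix such a finite-type open $X$, which is finitely presented and syntomic over $\cO_K[1/p]$, and prove that $X$ is pure. By Lemma~\ref{lemma:purity-results}(2) it then suffices to verify $A$-purity of $X_A$ for every DVR $A$ over $\cO_K[1/p]$; write $\eta$ and $s$ for the generic and closed points of $\Spec A$. The base change $X_A$ is flat and finitely presented over $A$, and its fibers are local complete intersections; in particular $(X_A)_\eta$ is Cohen-Macaulay and hence $S_1$.

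According to the definition in \cite[3.3.3]{Raynaud-Gruson}, the remaining task is to show that every associated point $\xi$ of $(X_A)_\eta$ admits a specialization in $X_A$ lying over $s$ (the associated points of $(X_A)_s$ specialize to themselves, and no other generizations of $s$ are relevant). Since $(X_A)_\eta$ is $S_1$, its associated points are exactly the generic points of its irreducible components. Given such a component $Z$ with generic point $\xi$, let $\ov Z \subseteq X_A$ be its scheme-theoretic closure. Locally on $X_A = \Spec B$, the subscheme $Z$ is cut out by an ideal $I \subseteq B[1/\pi]$, and $\cO_{\ov Z}$ corresponds to the sub-$B$-module $B/(I \cap B) \subseteq B[1/\pi]/I$; since $B[1/\pi]/I$ is an $A[1/\pi]$-module it is $A$-torsion-free, hence so is its submodule $B/(I\cap B)$, and therefore $\ov Z$ is $A$-flat. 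Being a nonempty flat $A$-subscheme of $X_A$, $\ov Z$ surjects onto $\Spec A$, so $\ov Z \cap (X_A)_s \neq \emptyset$ and any point in this intersection is a specialization of $\xi$ lying over $s$.

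The main subtlety is bookkeeping rather than mathematical: matching our situation to the Raynaud-Gruson setup (which requires finite presentation) via the finite-type reduction, and tracing through the definition of purity. I do not anticipate substantial obstacles, since the syntomic hypothesis delivers exactly the property on fibers ($S_1$) that is needed.
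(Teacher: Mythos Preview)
There is a genuine gap in your argument, and it lies exactly at the step ``Being a nonempty flat $A$-subscheme of $X_A$, $\ov Z$ surjects onto $\Spec A$.'' This implication is false: over a DVR $A$ with uniformizer $\pi$, the scheme $\Spec A[x]/(\pi x - 1) \cong \Spec \Frac(A)$ is nonempty, flat, and of finite type over $A$, yet has empty special fiber. In fact, the statement you are asserting here is essentially equivalent to the purity you are trying to prove (compare Lemma~\ref{lemma:purity-results}(5)), so the argument is circular. Syntomicity alone cannot rescue this: the open subscheme $\Spec \cO_K[1/p\ell] \subset \Spec \cO_K[1/p]$ is syntomic but not pure over $\cO_K[1/p]$, so some input beyond the $S_1$ property of fibers is genuinely needed.

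The paper's proof supplies that extra input by invoking \cite[4.1]{DHKM} in a different way. After reducing to complete DVRs via Lemma~\ref{lemma:purity-results}(2), it uses the criterion of Lemma~\ref{lemma:purity-results}(4): over a complete DVR with uniformizer $\pi$, an affine flat finite-type scheme is pure if and only if its coordinate ring is $\pi$-adically separated. The nontrivial fact, established in \cite[4.1]{DHKM}, is that each connected component of $\underline{\rm{Z}}^1(W_F^0, H)_{\bZ_\ell \otimes \cO_K}$ has $\ell$-adically separated coordinate ring. This is a specific structural feature of these moduli spaces (ultimately coming from the freeness results proved there) and does not follow from syntomicity.
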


\begin{proof}
    By Lemma~\ref{lemma:purity-results}(1), (2), and (3), we need only show that $\underline{\rm{Z}}^1(W_F^0, H)_{\bZ_\ell \otimes \cO_K}$ is $\bZ_\ell \otimes \cO_K$-pure for every prime number $\ell \neq p$. Every connected component of $\underline{\rm{Z}}^1(W_F^0, H)_{\bZ_\ell \otimes \cO_K}$ is flat, finite type, and affine by \cite[4.1]{DHKM}, and the same reference shows that the coordinate ring of each component is $\ell$-adically separated. (Strictly speaking, \cite[4.1]{DHKM} only applies as written when $K = \bQ$, but the proof never uses this assumption.) Thus Lemma~\ref{lemma:purity-results}(4) shows the result.
\end{proof}

To make reductions later, we need two elementary results about purity which are mainly topological in nature.

\begin{lemma}\label{lemma:conn-passes-to-image}
    Let $R$ be an integral domain, let $\fp \in \Spec R$, and let $X$ and $Y$ be finitely presented $R$-schemes such that $Y$ is flat and $R$-pure. Let $f\colon X \to Y$ be an $R$-morphism, and suppose
    \begin{enumerate}
        \item $f_{k(\fp)}\colon X_{k(\fp)} \to Y_{k(\fp)}$ is surjective,
        \item $X$ is connected.
    \end{enumerate}
    Then $Y$ is connected.
\end{lemma}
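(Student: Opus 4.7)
The plan is to argue by contradiction. Suppose $Y = Y_1 \sqcup Y_2$ is a decomposition into two nonempty open-closed subschemes. Setting $X_i = f^{-1}(Y_i)$ yields a decomposition $X = X_1 \sqcup X_2$ into open-closed pieces; since $X$ is connected, one of the $X_i$ is empty, and without loss of generality $X_2 = \emptyset$. Then $f_{k(\fp)}$ factors through $(Y_1)_{k(\fp)}$, so surjectivity of $f_{k(\fp)}$ forces $(Y_2)_{k(\fp)} = \emptyset$. The remaining task is to derive a contradiction from the fact that $Y_2$ is a nonempty clopen subscheme of $Y$ whose fiber over $\fp$ is empty. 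In other words, I want to prove the following clean auxiliary claim: a nonempty flat, finitely presented, $R$-pure scheme $Z$ over an integral domain $R$ has surjective structure morphism $Z \to \Spec R$.

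To apply this to $Z = Y_2$, I first need to verify that $Y_2$ inherits $R$-purity from $Y$. Locally, $\Gamma(Y_2, \cO_{Y_2})$ is a direct summand of $\Gamma(Y, \cO_Y)$, and purity of flat modules over a ring is preserved under direct summands; this can be checked via Lemma~\ref{lemma:purity-results}(2) on DVR base changes, where Lemma~\ref{lemma:purity-results}(4) reduces purity to $\pi$-adic separatedness, a property clearly inherited by direct summands. Inheritance of flatness and finite presentation is routine.

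For the auxiliary claim itself, I would first reduce to $R$ noetherian by a standard spreading-out argument: write $R = \varinjlim R_\alpha$ with $R_\alpha$ finitely generated $\bZ$-subalgebras of $R$, descend $Z$ to a flat finitely presented $Z_\alpha$ over some $R_\alpha$, and observe that since $\Spec R \to \Spec R_\alpha$ is surjective, it suffices to prove $Z_\alpha \to \Spec R_\alpha$ is surjective; purity of $Z_\alpha$ can be arranged by increasing $\alpha$ using Lemma~\ref{lemma:purity-results}(2). Then, assuming $R$ noetherian, note that flatness and nonemptiness of $Z$ over the integral domain $R$ give $Z_K \neq \emptyset$ for $K = \Frac R$. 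Given $\fp \in \Spec R$, pick a DVR $V \subseteq K$ dominating $R_\fp$ (which exists by noetherianness of $R$). The base change $Z_V$ is $V$-pure by Lemma~\ref{lemma:purity-results}(1), and its generic fiber is nonempty; Lemma~\ref{lemma:purity-results}(5) then forces the special fiber $Z_{k_V}$ to be nonempty, whence $Z_{k(\fp)} \neq \emptyset$ since $k_V$ is an extension of $k(\fp)$. This contradicts $(Y_2)_{k(\fp)} = \emptyset$.

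The main obstacle will be the two purity descent statements: passage of purity to the clopen subscheme $Y_2$, and arranging a pure model over a noetherian subring in the spreading-out step. Both feel like they should be routine once one unwinds the definition of purity from \cite{Raynaud-Gruson}, but they are the places where one must step outside the convenient black-box provided by Lemma~\ref{lemma:purity-results}; the DVR argument itself is essentially automatic given part (5) of that lemma.
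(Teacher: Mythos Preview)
Your proposal is correct and takes essentially the same approach as the paper: both deduce from flatness and purity (via Lemma~\ref{lemma:purity-results}(5) over a DVR dominating $R_\fp$, after the same noetherian reduction) that every nonempty clopen of $Y$ has nonempty $k(\fp)$-fiber, and then combine this with connectedness of $X$ and surjectivity of $f_{k(\fp)}$. Your contradiction framing avoids the paper's explicit chain-of-specializations step but is otherwise identical in substance; the concern about purity passing to the clopen $Y_2$ is easily dispatched, since for a clopen subscheme closures agree with those computed in the ambient scheme, so the condition in \cite[3.3.3]{Raynaud-Gruson} is inherited automatically.
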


\begin{proof}
    By standard limit arguments, we may and do assume $R$ is noetherian. Since $Y$ is flat, each component of $Y$ has nonempty $\Frac(R)$-fiber. By purity and Lemma~\ref{lemma:purity-results}(5), it follows that each component has nonempty $k(\fp)$-fiber. Thus it suffices to show that any two closed points $y_1$, $y_2$ of $Y_{k(\fp)}$ are contained in the same component of $Y$. By (1), there exist points $x_1$, $x_2$ of $X_{k(\fp)}$ lifting $y_1$, $y_2$. By (2), there is a sequence $x_1 = s_1, \eta_1, s_2, \dots, \eta_n, s_n = x_2$ in $X$ such that $s_{i+1}$ specializes $\eta_i$ and $\eta_{i+1}$ for all $i$. The sequence $y_1 = f(s_1), f(\eta_1), \dots, f(s_n) = y_2$ provides a chain in $Y$ which shows that $y_1$ and $y_2$ lie in the same component.
\end{proof}



\begin{lemma}\label{lemma:pure-to-finite-connected}
    Let $R$ be an integral domain, and let $f\colon X \to Y$ be a surjective morphism of finitely presented $R$-schemes such that $X$ is $R$-pure, $Y$ is $R$-finite, and $f^{-1}(y)$ is connected for all $y \in Y$. Then the map $\pi_0(f)\colon \pi_0(X) \to \pi_0(Y)$ is bijective.
\end{lemma}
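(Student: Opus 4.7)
My plan is to reduce to a strictly Henselian local base and then descend. First I reduce to $R$ Noetherian by standard limit arguments. Surjectivity of $\pi_0(f)$ is immediate: each connected component of $X$ has connected image in $Y$, hence lies in a single component of $Y$, and since $f$ is surjective, every component of $Y$ is hit.

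For injectivity, it suffices to show that $f^{-1}(C)$ is connected for every connected component $C$ of $Y$. Since $C$ is clopen in $Y$, the preimage $f^{-1}(C)$ is clopen in $X$, and it inherits $R$-purity in the sense we need: for a clopen $V \subset X$, the irreducible components of $V_K$ (with $K = \Frac R$) are irreducible components of $X_K$ contained in $V$, and their closures in $V$ agree with their closures in $X$ because $V$ is closed, so the conclusion of Lemma~\ref{lemma:purity-results}(5) is inherited. After renaming, I may therefore assume $Y$ is connected and aim to show $X$ is connected.

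The heart of the argument is the strictly Henselian case. Assume $R$ is strictly Henselian local with closed point $\fp_0$. Then finite $R$-algebras decompose as products of local $R$-algebras indexed by their maximal ideals, so $Y$ connected forces $Y = \Spec A$ with $A$ local, with a unique closed point $y_0$ above $\fp_0$. The reduced closed fiber of $Y$ over $R$ is supported only at $y_0$, so topologically $X_{\fp_0} = f^{-1}(y_0) = X_{y_0}$, which is connected by hypothesis. Now $R$-purity of $X$ (after base change to a DVR $\Spec V \to \Spec R$ dominating $\fp_0$, via Lemma~\ref{lemma:purity-results}(5)) implies that every irreducible component of $X$ meets $X_{\fp_0} = X_{y_0}$. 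Since $X_{y_0}$ is connected, all irreducible components of $X$ lie in a single connected component, so $X$ is connected.

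For general Noetherian $R$, I descend from strict Henselizations. Write $X = \bigsqcup_\alpha D_\alpha$ for the decomposition into connected components; by connected fibers, the images $f(D_\alpha)$ partition $Y$ set-theoretically. Each $f(D_\alpha)$ is constructible, hence clopen in $Y$ iff stable under specialization and generalization, in which case $Y$ connected forces all but one $D_\alpha$ to be empty. Every nontrivial specialization in $Y$ descends to a specialization $\fp \leadsto \fp'$ in $\Spec R$ (since $Y$ is finite over $R$, there are no nontrivial specializations within a single fiber $Y_{k(\fp)}$) and is detected after base change to the strict Henselization of $R$ at $\fp'$. Applying the strictly Henselian case to the base-changed map shows that $f(D_\alpha)$ pulls back to a union of connected components of the base change of $Y$, hence is clopen there, giving the required stability. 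The main obstacle is the strictly Henselian case, and specifically using Lemma~\ref{lemma:purity-results}(5) to confirm that every irreducible component of $X$ meets the closed fiber; the descent step is then largely formal.
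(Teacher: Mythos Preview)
Your strictly Henselian case is fine, but the descent step has a genuine gap. To apply the strictly Henselian case to the base-changed map $f_{R^{sh}_{\fp'}}$, you need its fibers to be connected. However, the hypothesis is only that $f^{-1}(y)$ is connected, not \emph{geometrically} connected, and connectedness is not stable under field extension: a fiber $f_{R^{sh}}^{-1}(\tilde y) = f^{-1}(y) \times_{k(y)} k(\tilde y)$ can split. (Already over a field: take $R=\bQ$, $Y=\Spec\bQ$, $X=\Spec\bQ(i)$; the unique fiber is connected, but after base change to $\bQ(i)$ it has two points, and $\pi_0(X_{\bQ(i)})\to\pi_0(Y_{\bQ(i)})$ is not injective.) There is a secondary issue that the strict Henselization of a Noetherian local domain need not be a domain, so your DVR argument showing ``every irreducible component of $X$ meets $X_{\fp_0}$'' does not transfer verbatim either; but the fiber-connectedness problem is the essential obstruction.

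The paper's proof avoids base change entirely. It works directly over $R$: since $Y$ is $R$-finite and $K=\Frac R$ is a field, $Y_K$ is a finite discrete set $\{x_1,\dots,x_n\}$, so each $f^{-1}(x_i)$ is already a connected component of $X_K$ with no field extension needed. One then takes $X_i=\overline{f^{-1}(x_i)}$ in $X$, uses purity (via a DVR hitting any prescribed $\fq\in\Spec R$) to see $(X_i)_{k(\fq)}\neq\emptyset$, and glues $X_i$ to the fibers $f^{-1}(y)$ for $y\in\overline{\{x_i\}}$ using the connected-fiber hypothesis only at points of $Y$ itself. Your approach could be salvaged by strengthening the hypothesis to geometrically connected fibers (which does hold in the paper's application), but as the lemma is stated you need an argument that stays over $R$.
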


The main point making this lemma nontrival is that $f$ is not necessarily flat; the conclusion does not hold if, for example, $Y$ is not assumed finite or $X$ is not assumed pure.

\begin{proof}
    It is enough to consider the case of noetherian $R$. First, $\pi_0(f)$ is clearly surjective. To see injectivity, let $C$ be a connected component of $Y$. If $K = \Frac(R)$, let $C_K = \{x_1, \dots, x_n\}$, and note that $\ov{\{x_i\}} \to \Spec R$ is surjective by finiteness. Each $\ov{\{x_i\}}$ is connected, and since $C$ is connected, we have $C = \bigcup_{i=1}^n \ov{\{x_i\}}$. Thus it suffices to show that $f^{-1}(\ov{\{x_i\}})$ is connected for all $i$.\smallskip
    
    By finiteness of $Y$, the fiber $f^{-1}(x_i)$ is a connected component of $X_K$ for all $i$. If $X_i$ denotes the Zariski closure of $f^{-1}(x_i)$ in $X$, then $X_i$ is connected for all $i$. We have
    \[
    f^{-1}(\ov{\{x_i\}}) = X_i \cup \bigcup_{\fq \in \Spec R} f^{-1}(\ov{\{x_i\}}_{k(\fq)}).
    \]
    By hypothesis, each $f^{-1}(\ov{\{x_i\}}_{k(\fq)})$ is connected, and by Lemma~\ref{lemma:purity-results}(5), the fiber $(X_i)_{k(\fq)}$ is nonempty. Thus $(X_i)_{k(\fq)}$ meets $f^{-1}(\ov{\{x_i\}}_{k(\fq)})$, and it follows that $f^{-1}(\ov{\{x_i\}})$ is connected.
\end{proof}

\subsection{Complete reducibility}\label{ss:cr}

Recall \cite[\S 2.1]{CGP} that if $H$ is a reductive group over a field $k$ and $\lambda\colon\bGm \to H$ is a cocharacter, then there are subgroups $P_H(\lambda), Z_H(\lambda) \subset H$ defined by
\[
P_H(\lambda) = \{h \in H\colon \, \lim_{t \to 0} \lambda(t)h\lambda(t)^{-1} \text{ exists}\} \text{ and }
Z_H(\lambda) = \{h \in H\colon \, \Ad(h)\lambda = \lambda\}.
\]
If $H$ is connected, then $P_H(\lambda)$ is a parabolic subgroup of $H$ and $Z_H(\lambda)$ is a Levi of $P_H(\lambda)$.\smallskip

If $\Gamma$ is a group, then a homomorphism $f\colon \Gamma \to H(k)$ is \textit{completely reducible} if for every cocharacter $\lambda\colon \bGm \to H$ such that $f$ factors through $P_H(\lambda)$, there is some cocharacter $\mu\colon \bGm \to H$ such that $P_H(\lambda) = P_H(\mu)$ and $f$ factors through $Z_H(\mu)$. The following three results are well-known.

\begin{theorem}\cite[3.7]{BMR05}\label{theorem:closed-orbit}
    If $\Gamma$ is finitely generated, then $f\colon \Gamma \to H(k)$ is completely reducible if and only if the $H$-orbit of $f$ in $\uHom_{k\textrm{-}\rm{gp}}(\Gamma, H)$ is closed.
\end{theorem}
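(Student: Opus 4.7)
Since $\Gamma$ is finitely generated by some $\gamma_1, \ldots, \gamma_n$, the evaluation map $f \mapsto (f(\gamma_1), \ldots, f(\gamma_n))$ realizes $\uHom_{k\textrm{-}\rm{gp}}(\Gamma, H)$ as a closed subscheme of $H^n$ on which $H$ acts by simultaneous conjugation. So the question reduces to analyzing $H$-orbit closures of tuples in $H^n$ under simultaneous conjugation, where the available tools are the Hilbert--Mumford/Kempf--Rousseau style theorems on the existence of optimal cocharacters driving a point out of its orbit. The plan is to verify each implication by pairing these optimality results with the basic observation that for any cocharacter $\lambda\colon \bGm \to H$, an element $h \in H(k)$ lies in $P_H(\lambda)$ precisely when $\lim_{t \to 0} \lambda(t) h \lambda(t)^{-1}$ exists, in which case the limit lies in $Z_H(\lambda)$ and is the image of $h$ under the canonical projection $\pi_\lambda\colon P_H(\lambda) \twoheadrightarrow Z_H(\lambda)$.

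For the direction ``orbit closed $\Rightarrow$ $f$ completely reducible'': suppose $f$ factors through $P_H(\lambda)$. Then $t \mapsto \Ad(\lambda(t)) \circ f$ extends to a morphism $\bA^1 \to \uHom(\Gamma, H)$ with value at $t = 0$ equal to $\pi_\lambda \circ f$. This limit thus lies in $\overline{H \cdot f} = H \cdot f$, so there is some $g \in H(k)$ with $g \cdot (\pi_\lambda \circ f) \cdot g^{-1} = f$. Setting $\mu = g \cdot \lambda$ (conjugation of cocharacters), we have $Z_H(\mu) = g Z_H(\lambda) g^{-1}$, and $f$ factors through $Z_H(\mu)$. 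It remains to arrange $P_H(\mu) = P_H(\lambda)$; this is where one needs to know that $\mu$ can be chosen ``associated'' to $\lambda$ inside $P_H(\lambda)$, i.e.\ $g$ can be adjusted by an element of $P_H(\lambda)$ to ensure $P_H(\mu) = P_H(\lambda)$. This is an application of the fact that any two Levi subgroups of $P_H(\lambda)$ are $R_u(P_H(\lambda))$-conjugate.

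For the converse, suppose $f$ is completely reducible but $H \cdot f$ is not closed. By the Kempf--Rousseau theorem applied to $f \in H^n$ under simultaneous conjugation, there is a cocharacter $\lambda$ such that $f' := \lim_{t \to 0} \Ad(\lambda(t)) \circ f$ exists and lies in a strictly smaller orbit. Existence of the limit forces $f$ to factor through $P_H(\lambda)$, so by complete reducibility there is a cocharacter $\mu$ with $P_H(\mu) = P_H(\lambda)$ and $f$ factoring through $Z_H(\mu)$. Since $Z_H(\mu)$ and $Z_H(\lambda)$ are two Levis of the same parabolic, they are conjugate by some $u \in R_u(P_H(\lambda))$, and one checks that $\Ad(u) \circ f$ and $\pi_\lambda \circ f = f'$ coincide. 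This exhibits $f'$ in the same $H$-orbit as $f$, contradicting the assumption that $f'$ lies in a smaller orbit.

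The main obstacle is the Kempf--Rousseau/Hilbert--Mumford input establishing the existence of the destabilizing cocharacter $\lambda$ in arbitrary characteristic: over $\bC$ this is the classical geometric invariant theory, but in positive characteristic one needs Kempf's optimality theorem (or Hesselink's version) for the $H$-action on $H^n$. Once that is in hand, everything else reduces to the bookkeeping about parabolics, Levis, and the projection $\pi_\lambda$ sketched above.
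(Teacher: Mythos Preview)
Your first paragraph---embedding $\uHom_{k\text{-}\mathrm{gp}}(\Gamma,H)$ as a closed $H$-stable subscheme of $H^n$ via evaluation at a generating set---is exactly the paper's proof; the paper then simply invokes \cite[3.7]{BMR05} for tuples in $H^n$ and stops. Everything after that paragraph is a sketch of the argument behind the cited reference and goes beyond what is needed here.

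That extra sketch is broadly on the right track, but the ``closed orbit $\Rightarrow$ completely reducible'' direction has a gap as written. From $f = g(\pi_\lambda\circ f)g^{-1}$ you conclude that $f$ factors through $Z_H(g\lambda)$, and you then want $P_H(g\lambda)=P_H(\lambda)$, i.e.\ $g\in N_H(P_H(\lambda))=P_H(\lambda)$. Your proposed fix---``any two Levis of $P_H(\lambda)$ are $R_u(P_H(\lambda))$-conjugate''---does not apply, because a priori $Z_H(g\lambda)$ is a Levi of $P_H(g\lambda)$, not of $P_H(\lambda)$. What is actually needed is the stronger statement that $f$ and $\pi_\lambda\circ f$ are already $P_H(\lambda)$-conjugate (not merely $H$-conjugate); this is where \cite{BMR05} invokes Richardson's results relating closed orbits to strong reductivity. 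Since the paper treats this theorem as a black-box citation, none of this affects the validity of your proposal for the paper's purposes.
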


\begin{proof}
    A surjection from a finitely generated free group to $\Gamma$ induces an $H$-equivariant closed embedding $\uHom_{k\textrm{-}\rm{gp}}(\Gamma, H) \to H^n$, so the result follows from \cite[3.7]{BMR05}.
\end{proof}

\begin{theorem}\cite[3.10]{BMR05}\label{theorem:bmr-normal}
    If $f\colon \Gamma \to H(k)$ is completely reducible and $N \subset \Gamma$ is a normal subgroup, then $f|_N$ is also completely reducible.
\end{theorem}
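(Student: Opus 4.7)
The plan is to argue by contradiction: assume $f|_N$ is not completely reducible and derive that $f$ is not completely reducible either. First I would reduce to the case that $\Gamma$ and $N$ are finitely generated so that the closed-orbit criterion of Theorem~\ref{theorem:closed-orbit} applies. Whether a given parabolic of $H$ contains $f(N)$ is detected on a finite generating set of $N$ (since the poset of parabolics of $H$ is Noetherian), so after enlarging $N$ to a finitely generated subgroup containing such a set together with its $\Gamma$-conjugates—an operation that preserves normality—and restricting $\Gamma$ to a finitely generated subgroup containing the conjugating elements, we are in the finitely generated setting.

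The central input is the theory of optimal destabilizing cocharacters (Kempf--Rousseau), applied to the subgroup $S := f(N) \subseteq H(k)$. Assuming $S$ is not $H$-completely reducible—equivalently, the $H$-orbit of a generating tuple of $S$ in $H^m$ is not closed—Kempf's theorem associates to $S$ a canonical parabolic $P_0 = P(S)$ of $H$ (the ``optimal destabilizing parabolic'') such that (i) $S \subseteq P_0$, (ii) no Levi subgroup of $P_0$ contains $S$, and (iii) every element of $H(k)$ normalizing $S$ also normalizes $P_0$.

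Now I use normality: for every $\gamma \in \Gamma$, conjugation by $f(\gamma)$ sends $S = f(N)$ to $f(\gamma N \gamma^{-1}) = f(N) = S$, so by (iii) $f(\gamma)$ normalizes $P_0$. Since $N_H(P_0) = P_0$, we conclude $f(\Gamma) \subseteq P_0$. Complete reducibility of $f$ then furnishes a cocharacter $\mu$ with $P_H(\mu) = P_0$ such that $f(\Gamma) \subseteq Z_H(\mu)$; restricting to $N$ gives $f(N) \subseteq Z_H(\mu)$, a Levi subgroup of $P_0$. This contradicts (ii), completing the proof.

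The hard step is the canonicity statement (iii) from Kempf--Rousseau theory, which is precisely the reason normality of $N$ has any bite in this argument. This is a deep classical result (proved in various forms by Kempf and Rousseau for a single element, and extended to subgroups/subsets via the $H$-orbit reformulation used by Bate--Martin--Roehrle), and reducing to its application is the whole geometric content of the theorem; the finitely-generated reduction and the derivation of the contradiction are formal.
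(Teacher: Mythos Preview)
The paper does not prove this statement; it is quoted directly from \cite[3.10]{BMR05} with no argument given. Your proposal reproduces the proof found in that reference: attach to $f(N)$ its optimal destabilizing parabolic $P_0$ via Kempf--Rousseau theory, use the canonicity of $P_0$ under the normalizer of $f(N)$ together with normality of $N$ to conclude $f(\Gamma)\subset N_H(P_0)=P_0$, and then contradict complete reducibility of $f$. This is correct and is exactly the approach of \cite{BMR05}.

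Two minor remarks. First, the finitely-generated reduction is not really needed: complete reducibility of $f$ depends only on the image subgroup $f(\Gamma)\subset H(k)$, and the optimal class of cocharacters in \cite{BMR05} is attached directly to a subgroup (via its Zariski closure), not to a generating tuple, so Theorem~\ref{theorem:closed-orbit} need not be invoked. Second, in the application in this paper (Lemma~\ref{lemma:reduce-to-ss}) the ambient group is disconnected, so the step $N_H(P_0)=P_0$ should be read in the sense of R-parabolics $P_H(\lambda)$ for non-connected $H$; this is how \cite{BMR05} is set up, but your sketch does not flag it.
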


We recall that an automorphism $\sigma$ of $H^0$ is \textit{quasi-semisimple} if it preserves a Borel pair of $H^0$. 

\begin{lemma}\label{lemma:bz-semisimple}
    If $f\colon \bZ \to H(k)$ is a homomorphism, then $f$ is completely reducible if and only if the conjugation action of $f(1)$ on $H^0$ is quasi-semisimple.
\end{lemma}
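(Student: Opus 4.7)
The plan is to reduce the statement to a claim about closed conjugacy classes using Theorem~\ref{theorem:closed-orbit}: under the $H$-equivariant identification $\uHom_{k\text{-gp}}(\bZ, H) = H$, $f \mapsto f(1)$, the homomorphism $f$ is completely reducible if and only if the $H$-conjugacy class of $h := f(1)$ is closed in $H$. Setting $\sigma := \Ad(h)|_{H^0}$, the task becomes showing that closedness of this conjugacy class is equivalent to $\sigma$ being quasi-semisimple. The key tool is the Lang map: for any $H$-conjugate $h' = ghg^{-1}$ lying in a coset $H^0 h'$ of $H^0$ in $H$, the bijection $H^0 h' \cong H^0$ sending $xh' \mapsto x$ transforms $H^0$-conjugation on the coset into $\sigma'$-twisted conjugation, where $\sigma' = \Ad(h')|_{H^0}$; the $H^0$-orbit of $h'$ thereby corresponds to the image $L_{\sigma'}(H^0) = \{x \sigma'(x)^{-1} : x \in H^0\} \subseteq H^0$.

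For the ``if'' direction, I would observe that if $\sigma$ preserves a Borel pair then so does each conjugate $\sigma'$ (by transporting the pair), and invoke Steinberg's surjectivity theorem for the Lang map of a quasi-semisimple automorphism: $L_{\sigma'}(H^0) = H^0$ geometrically. Hence the $H^0$-orbit of $h'$ equals the whole connected component $H^0 h'$ of $H$, and is therefore closed. The full $H$-conjugacy class is a finite union of such closed $H^0$-orbits, hence closed, and Theorem~\ref{theorem:closed-orbit} delivers complete reducibility.

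For the ``only if'' direction, suppose the $H$-conjugacy class of $h$ is closed and write $h = h_s h_u$ for the Jordan decomposition in $H$, so that $h_u \in H^0$. If $h_u \neq 1$, then $h_u$ is a nontrivial unipotent element of the connected reductive group $Z_{H^0}(h_s)^0$, and by the standard parabolic-contraction argument there is a cocharacter $\lambda\colon \bGm \to Z_{H^0}(h_s)^0$ with $\lim_{t\to 0} \lambda(t) h_u \lambda(t)^{-1} = 1$. Since $\lambda$ commutes with $h_s$, one gets $\lim_{t\to 0}\lambda(t) h \lambda(t)^{-1} = h_s$, so $h_s \in \overline{H\cdot h}$; but $h_s$ is not $H$-conjugate to $h$ (their Jordan decompositions differ), contradicting closedness. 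Hence $h$ is semisimple in $H$, so $\Ad(h)$ acts semisimply on $\Lie(H^0)$, and Steinberg's theorem that semisimple automorphisms of connected reductive groups preserve a Borel pair concludes the proof.

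The main obstacle will be handling the disconnectedness of $H$ cleanly: verifying that the $H$-orbit of $h$, which can spread across several cosets of $H^0$, is genuinely a finite union of $H^0$-orbits each identifiable with a Lang-map image, and that closedness can be tested coset by coset. The remaining inputs (Steinberg's Lang-map surjectivity, Steinberg's Borel-pair theorem for semisimple automorphisms, and the Hilbert--Mumford-style contraction of a nontrivial unipotent via a cocharacter) are classical, and closedness of orbits descends from $\bar k$ to $k$, so working over a general base field poses no additional difficulty.
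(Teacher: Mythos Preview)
Your reduction via Theorem~\ref{theorem:closed-orbit} to the equivalence ``the $H$-conjugacy class of $h$ is closed $\Leftrightarrow$ $\Ad(h)|_{H^0}$ is quasi-semisimple'' is correct, and this is exactly the content of the paper's single citation \cite[Prop.~1]{Springer-twisted}. But both directions of your direct argument break down.

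In the ``if'' direction, the assertion $L_{\sigma'}(H^0)=H^0$ is false: Steinberg's Lang--map surjectivity applies to endomorphisms with finite fixed-point group, not to arbitrary quasi-semisimple automorphisms. Already for $H=\GL_2$ and $h=\diag(1,2)$, the map $g\mapsto ghg^{-1}h^{-1}$ lands in $\SL_2$, so its image is a proper subset of $H^0$; the conjugacy class of $h$ is closed but has positive codimension. Showing that a quasi-semisimple element has closed orbit is genuinely the substance of Springer's proposition and is not a consequence of Lang--Steinberg. In the ``only if'' direction, the step ``closed orbit $\Rightarrow$ $h$ semisimple'' fails in positive characteristic, and with it the unjustified claim $h_u\in H^0$. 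Take $H^0=\SL_3$ in characteristic~$2$ and let $\tau$ be the pinning-preserving outer involution; then $h=1\rtimes\tau$ in $H=\SL_3\rtimes\langle\tau\rangle$ is unipotent (so $h_s=1$, $h_u=h\notin H^0$), yet $\Ad(h)|_{H^0}=\tau$ is quasi-semisimple and the orbit of $h$ is closed. Thus quasi-semisimplicity is strictly weaker than semisimplicity of $h$, and your contraction argument cannot be repaired by a more careful Jordan decomposition: you must argue directly with quasi-semisimplicity, which is what Springer does.
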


\begin{proof}
    This follows from \cite[Prop.\ 1]{Springer-twisted}.
\end{proof}

\subsection{A twisted Chevalley--Steinberg isomorphism}\label{ss:chevalley-steinberg}

Let $R$ be a ring, and let $H$ be a reductive $R$-group scheme. Let $\sigma$ be an $R$-automorphism of $H$ which preserves a Borel pair $(B, T)$. There is an action of $H$ on itself defined by $h \cdot_{\sigma} h' \coloneqq h \cdot h' \cdot \prescript{\sigma}{}{h}^{-1}$, which we call \textit{$\sigma$-twisted conjugacy}. Denote the resulting GIT quotient by $H/\!/_{\sigma} H$, and let $\chi_{\sigma} \colon H \to H/\!/_{\sigma} H$ be the natural map. Equivalently, one can view $H/\!/_{\sigma} H$ as the quotient $(H \ltimes \sigma)/\!/ H$, where $H$ acts by conjugation, and one can consider $\chi_{\sigma}$ as the natural map $H \ltimes \sigma \to (H \ltimes \sigma)/\!/ H$; this is the perspective we will take in Section~\ref{ss:fibral}. \smallskip

Let $W_0$ be the subgroup of $\sigma$-fixed points of the Weyl group of $T$ in $H$, and let $A = T/(1 - \sigma)T$. Let $W_0$ act on $A$ as follows: if $w_0 \in W_0$ and $n_0$ is a lift of $w_0$ in $N_{H}(T)$ (valued in some fppf $R$-algebra), then define $w_0 \cdot t = n_0 t \left(\prescript{\sigma}{}{n_0}\right)^{-1}$. Restricting $\chi_{\sigma}$ to $T$ induces a map $j\colon A/W_0 \to H/\!/_\sigma H$.\smallskip

In \cite[Thm.\ 1]{Springer-twisted} and \cite[4.2.3]{Xiao-Zhu}, it is shown that $j$ is an isomorphism if $R$ is a field; in \cite[4.1]{Lee-adjoint}, this is shown if $\sigma$ is the identity; in (the proof of) \cite[6.6]{DHKM}, this is shown provided that $R$ is a Dedekind domain. The aim of this section is to explain a formal way to extend these results to general $R$, akin to the arguments of \cite{Lee-adjoint}.

\begin{prop}\label{prop:twisted-chevalley-split}
    If $H$ is a split reductive $\bZ$-group scheme and $\sigma$ is a pinning-preserving automorphism of $H$, then the natural map $j\colon A/W_0 \to H/\!/_{\sigma} H$ is an isomorphism.
\end{prop}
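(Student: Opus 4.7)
The plan is to deduce the $\bZ$-statement from the Dedekind-domain case by localizing at each prime of $\bZ$ and invoking the local--global principle for $\bZ$-modules. Since both the source and target of $j$ are affine over $\bZ$, it suffices to prove that the induced ring homomorphism
\[
j^*\colon \bZ[H]^{H,\sigma} \longrightarrow \bZ[A]^{W_0}
\]
is an isomorphism, where $\bZ[H]^{H,\sigma}$ denotes the subring of invariants for the action of $H$ on itself by $\sigma$-twisted conjugation. A homomorphism of $\bZ$-modules is an isomorphism if and only if it is so after localization at every prime ideal $\fp \subset \bZ$ (including $\fp = (0)$), so it is enough to check this one prime at a time.

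Fix a prime ideal $\fp$. The key point is that localization is exact and $\bZ$-linear, and therefore commutes with the formation of invariants for any algebraic group action: since $\bZ[H]^{H,\sigma}$ is the kernel of the $\bZ$-linear coaction-difference map $\bZ[H] \to \bZ[H] \otimes_\bZ \bZ[H]$, tensoring with $\bZ_{(\fp)}$ preserves kernels. This gives canonical identifications
\[
\bZ[H]^{H,\sigma} \otimes_\bZ \bZ_{(\fp)} = \bZ_{(\fp)}[H_{\bZ_{(\fp)}}]^{H_{\bZ_{(\fp)}},\sigma}, \qquad \bZ[A]^{W_0} \otimes_\bZ \bZ_{(\fp)} = \bZ_{(\fp)}[A_{\bZ_{(\fp)}}]^{W_0},
\]
under which $j^* \otimes_\bZ \bZ_{(\fp)}$ corresponds to the pullback of the base change $j_{\bZ_{(\fp)}}\colon (A/W_0)_{\bZ_{(\fp)}} \to (H/\!/_\sigma H)_{\bZ_{(\fp)}}$. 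Since $\bZ_{(\fp)}$ is a discrete valuation ring, hence a Dedekind domain, the argument in the proof of \cite[6.6]{DHKM} shows that $j_{\bZ_{(\fp)}}$ is an isomorphism. Assembling these statements over all $\fp$ completes the argument.

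The only genuine subtlety is whether the invocation of \cite[6.6]{DHKM} is really in the spirit of the ``formal'' approach advertised at the start of the subsection. If one wants an argument independent of \cite{DHKM} and closer to \cite{Lee-adjoint}, the principal obstacle is to show directly that $\bZ[H]^{H,\sigma}$ is a free $\bZ$-module of the expected rank and that its formation commutes with arbitrary base change. The natural route is to construct a Steinberg-type section $\Sigma \subset H$ defined over $\bZ$ whose composite with the quotient $H \to H/\!/_\sigma H$ is an isomorphism; the pinning-preserving hypothesis on $\sigma$ is precisely what is needed to carry out this construction uniformly over $\bZ$. Given such a section, $\bZ[H]^{H,\sigma}$ is manifestly $\bZ$-free, base change is automatic, and the isomorphism $j$ is reduced to the field case of \cite{Springer-twisted} and \cite{Xiao-Zhu} by an elementary comparison of rank and injectivity.
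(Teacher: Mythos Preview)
Your argument is correct, and it follows essentially the same route as the paper: both reduce the claim to the Dedekind-domain case established in the proof of \cite[6.6]{DHKM}. The only difference is that you insert an unnecessary localization step. Since $\bZ$ is itself a Dedekind domain, the paper simply observes that the argument of \cite[6.6]{DHKM} can be read verbatim over $\bZ$ (with the generic split reductive group and its pinning-preserving automorphism substituted for the specific objects appearing there). Your passage through the localizations $\bZ_{(\fp)}$ is valid---flat base change does commute with invariants as you say---but it only recovers the same conclusion by a slightly longer path. Your second paragraph, sketching an alternative via a Steinberg-type section over $\bZ$, is a reasonable remark but is not needed for the proof.
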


\begin{proof}
    The argument of \cite[6.6]{DHKM} can be read verbatim with the tuple $(H, B, T, \sigma, W_0)$ in place of $(C_{\hat{G}}(\phi)^0, B_{\phi}, T_{\phi}, \beta, (\Omega_{\phi}^\circ)^{\Ad \beta})$, and $\bZ$ in place of the base ring $\cO_{\widetilde{K}_e}[\frac{1}{pN_{\hat{G}}}]$ (with notation as in \textit{loc.\ cit.}).
\end{proof}

The following lemma is a straightforward generalization of (the proof of) \cite[4.2]{Lee-adjoint}, and presumably well-known, but we give an argument for want of a reference.

\begin{lemma}\label{lemma:enough-to-check-GIT-on-primes}
    Let $R$ be a noetherian ring, let $H$ be a flat affine $R$-group scheme, and let $X$ be a flat affine $R$-scheme with an action of $H$. The formation of the GIT quotient $X/\!/H \coloneqq \Spec R[X]^H$ commutes with arbitrary base change if and only if, for every prime ideal $\fp \in \Spec R$, the base change map $X/\!/H \otimes_R k(\fp) \to X_{k(\fp)}/\!/H_{k(\fp)}$ is an isomorphism.
\end{lemma}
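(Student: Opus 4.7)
The ``only if'' direction is immediate, as base change to $k(\fp)$ is a special case of arbitrary base change. For the converse, set $M := R[X]^H$ and exhibit it as an equalizer: let $d\colon R[X] \to R[X] \otimes_R R[H]$ be $d(f) := \mu^*(f) - f \otimes 1$, where $\mu^*$ is the coaction; then $0 \to M \to R[X] \xrightarrow{d} R[X] \otimes_R R[H]$ is exact. Set $I := \im(d)$ and $J := \coker(d)$. Since $H$ and $X$ are $R$-flat, both $R[X]$ and $R[X] \otimes_R R[H]$ are $R$-flat.

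For any $R$-algebra $S$, one has $(R[X] \otimes_R S)^{H_S} = \ker(d \otimes_R S)$, and there is a canonical comparison map $\phi_S \colon M \otimes_R S \to \ker(d \otimes_R S)$. Tensoring the two short exact sequences $0 \to M \to R[X] \to I \to 0$ and $0 \to I \to R[X] \otimes_R R[H] \to J \to 0$ with $S$ and exploiting the flatness of $R[X]$ and $R[X] \otimes_R R[H]$, a direct diagram chase identifies $\ker(\phi_S) \cong \Tor_1^R(I, S)$ and $\coker(\phi_S) \cong \Tor_1^R(J, S)$. Hence $\phi_S$ is an isomorphism for every $R$-algebra $S$ if and only if both $I$ and $J$ are $R$-flat, so it suffices to deduce this from the hypothesis $\Tor_1^R(I, k(\fp)) = 0 = \Tor_1^R(J, k(\fp))$ for all primes $\fp$.

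Over the noetherian ring $R$, flatness of $P$ is equivalent to the vanishing of $\Tor_1^R(P, R/\fp)$ for every prime $\fp$ (use the prime filtration of finitely generated test modules, combined with the fact that Tor commutes with filtered colimits). To promote the residue-field hypothesis to vanishing against each $R/\fp$, I would proceed by noetherian induction on $\dim R/\fp$: the case of $\fp$ maximal is automatic since $R/\fp = k(\fp)$. For smaller-dimensional $R/\fp$, pick $s \in R \setminus \fp$ and apply $\Tor_1^R(P, -)$ to $0 \to R/\fp \xrightarrow{s} R/\fp \to R/(\fp + (s)) \to 0$; since all minimal primes of $\fp + (s)$ strictly contain $\fp$, devissage and the inductive hypothesis yield $\Tor_1^R(P, R/(\fp + (s))) = 0$, forcing multiplication by $s$ to be surjective on $\Tor_1^R(P, R/\fp)$.

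The hard part is closing this induction: the residue-field hypothesis only gives $\Tor_1^R(P, R/\fp) \otimes_{R/\fp} k(\fp) = \Tor_1^R(P, k(\fp)) = 0$, i.e., $\Tor_1^R(P, R/\fp)$ is torsion over the noetherian domain $R/\fp$, and a divisible torsion module need not vanish. To bridge this gap I would exploit the local finiteness of $R[X]$ as an $H$-comodule, writing $R[X] = \cLim_\alpha V_\alpha$ as a filtered colimit of finitely generated $H$-stable $R$-submodules and analyzing the resulting finitely generated approximations $I_\alpha := d(V_\alpha)$ and $J_\alpha := (V_\alpha \otimes R[H])/I_\alpha$ to $I$ and $J$. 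In the finitely generated setting the local criterion of flatness and Nakayama's lemma apply directly, and flatness of $I$ and $J$ follows by passing back to the colimit.
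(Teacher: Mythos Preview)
Your overall strategy is exactly the paper's: set $I = \im d$, $J = \coker d$, and reduce the base-change statement to flatness of $I$ and $J$, which you then try to extract from the vanishing of $\Tor_1$ against all residue fields. The paper does the same, and at the crucial point simply asserts that $\Tor_1^R(\coker(i), k(\fp)) = 0$ for all $\fp$ forces $\coker(i)$ to be $R$-flat.

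You are right to flag this step as the hard part. For an arbitrary module over a noetherian ring of dimension at least $2$, the implication is \emph{false}: over $R = k[[x,y]]$ the injective hull $E = E_R(k)$ has $\Tor_1^R(E, k(\fp)) = 0$ for every prime $\fp$ (for $\fp = \fm$ one checks this directly from the Koszul complex, and for $\fp \neq \fm$ it holds because $E$ is supported at $\fm$), yet $\Tor_2^R(E, k) \cong k$, so $E$ is not flat. Your noetherian-induction sketch runs into precisely this: $\Tor_1^R(P, R/\fp)$ is a torsion, divisible $R/\fp$-module, and such modules need not vanish.

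Your proposed fix via the comodule filtration does not close the gap as written. Even granting that $R[X] = \varinjlim V_\alpha$ with each $V_\alpha$ a finitely generated $H$-stable $R$-submodule, the approximation $J_\alpha = (V_\alpha \otimes_R R[H])/I_\alpha$ is a quotient of $V_\alpha \otimes_R R[H]$, which is not finitely generated over $R$ unless $R[H]$ is; so Nakayama and the local criterion do not apply to $J_\alpha$. The approximations $I_\alpha = d(V_\alpha)$ are finitely generated, but there is no reason the hypothesis $\Tor_1^R(I, k(\fp)) = 0$ descends to the individual $I_\alpha$, and the $V_\alpha$ themselves need not be $R$-flat. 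So both the paper's proof and your proposal leave the same step unjustified for general noetherian $R$. That said, the only place the paper invokes this lemma is over $R = \bZ$, where flat is the same as torsion-free and the residue-field criterion is immediate; over any Dedekind base your argument (and the paper's) goes through without further work.
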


\begin{proof}
    Suppose $X = \Spec C$, and let $\alpha\colon C \to R[H] \otimes_R C$ be the homomorphism corresponding to the action map, so $C^H = \ker(\alpha - 1)$. For each $\fp \in \Spec R$, observe the commutative diagram
    \[
    \begin{tikzcd}
        C^H \otimes_R k(\fp) \arrow[r] \arrow[d]
            &C \otimes_R k(\fp) \arrow[r] \arrow[d]
            &\im(\alpha - 1) \otimes_R k(\fp) \arrow[r] \arrow[d]
            &0 \\
        (C \otimes_R k(\fp))^{H_{k(\fp)}} \arrow[r]
            &C \otimes_R k(\fp) \arrow[r]
            &\im(\alpha_{k(\fp)} - 1) \arrow[r]
            &0
    \end{tikzcd}
    \]
    which shows, by our hypothesis and the five lemma, that the map $\im(\alpha - 1) \otimes_R k(\fp) \to \im(\alpha_{k(\fp)} - 1)$ is an isomorphism. Moreover, if $i\colon \im(\alpha - 1) \to R[H] \otimes_R C$ is the natural inclusion, then there is a commutative diagram
    \[
    \begin{tikzcd}
        \im(\alpha - 1) \otimes_R k(\fp) \arrow[r] \arrow[d]
            &R[H] \otimes_R C \otimes_R k(\fp) \arrow[r] \arrow[d]
            &\coker(i) \otimes_R k(\fp) \arrow[r] \arrow[d]
            &0 \\
        \im(\alpha_{k(\fp)} - 1) \arrow[r]
            &(k(\fp))[H] \otimes_{k(\fp)} C_{k(\fp)} \arrow[r]
            &\coker(i_{k(\fp)}) \arrow[r]
            &0
    \end{tikzcd}
    \]
    which shows, again by the five lemma, that $\coker(i) \otimes_R k(\fp) \to \coker(i_{k(\fp)})$ is an isomorphism. It follows that $i \otimes_R k(\fp)$ is injective, and hence $\Tor_1^R(\coker(i), k(\fp)) = 0$. Thus $\coker(i)$ is $R$-flat, so the natural map $C^H \otimes_R R' \to (C \otimes_R R')^{H_{R'}}$ is an isomorphism for all $R'$, as desired.
\end{proof}

\begin{theorem}\label{theorem:twisted-chevalley}
    Let $X$ be a scheme, let $H$ be a reductive $X$-group scheme admitting a Borel pair $(B, T)$, and let $\sigma$ be an $X$-automorphism of $H$ preserving $(B, T)$. The natural map $j\colon A/W_0 \to H/\!/_\sigma H$ is an isomorphism. The formation of each of $A/W_0$ and $H/\!/_\sigma H$ commutes with arbitrary base change.
\end{theorem}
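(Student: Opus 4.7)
The plan is to deduce Theorem~\ref{theorem:twisted-chevalley} from Proposition~\ref{prop:twisted-chevalley-split} by a descent-and-twist argument, then handle base change commutation via Lemma~\ref{lemma:enough-to-check-GIT-on-primes} and the known field case. First I would observe that both assertions are fpqc-local on $X$: isomorphism of schemes descends, and ``formation commutes with base change'' is likewise fpqc-local. Passing to an étale cover that trivializes $(H, B, T)$, I may assume $X = \Spec R$ for $R$ noetherian and $H = H_0 \otimes_\bZ R$ with $H_0$ a split reductive $\bZ$-group scheme equipped with a standard pinning. Since $\underline{\Aut}(H_0, B_0, T_0) = T_0^{\rm ad} \rtimes \Out(H_0)$ with $\Out(H_0)$ locally constant, possibly after a further fpqc cover I may write $\sigma = \Ad(t) \circ \sigma_0$ with $t \in T_0(R)$ and $\sigma_0$ a pinning-preserving automorphism base-changed from $\bZ$.

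The next step is a quasi-pinning reduction. The map $h \mapsto ht$ is an $H$-equivariant isomorphism from $(H, \cdot_\sigma)$ to $(H, \cdot_{\sigma_0})$, and $[s] \mapsto [st]$ is a $W_0$-equivariant isomorphism between the $\sigma$- and $\sigma_0$-actions on $A$; here I use that $A = T/(1-\sigma)T$ equals $T/(1-\sigma_0)T$ as schemes since $\Ad(t)$ restricts trivially to $T$. Both equivariances follow from $\prescript{\sigma}{}{g} = t \prescript{\sigma_0}{}{g} t^{-1}$, and the two maps intertwine $j$. Hence the theorem for $\sigma$ follows from the theorem for $\sigma_0$, and for $\sigma_0$ Proposition~\ref{prop:twisted-chevalley-split} already provides the isomorphism $j\colon A/W_0 \xrightarrow{\sim} H_0/\!/_{\sigma_0} H_0$ over $\bZ$.

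It then remains to verify base change commutation. For $A/W_0$: note that $\bZ[A] = \bZ[X^*(A)]$ has $\bZ$-basis $X^*(A)$ permuted by $W_0$, so $\bZ[A]^{W_0}$ is a free $\bZ$-module spanned by the $W_0$-orbit sums, and its formation commutes with any base change. For $H_0/\!/_{\sigma_0} H_0$, I would apply Lemma~\ref{lemma:enough-to-check-GIT-on-primes} to reduce to showing that, for each $\fp \in \Spec \bZ$, the natural map $\bZ[H_0]^{H_0, \sigma_0} \otimes k(\fp) \to k(\fp)[H_{k(\fp)}]^{H_{k(\fp)}, \sigma_0}$ is an isomorphism. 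The source identifies with $k(\fp)[A_{k(\fp)}]^{W_0}$ via the $\bZ$-isomorphism $j$ combined with the previous step, while the target identifies with the same object by Springer's theorem \cite[Thm.~1]{Springer-twisted} over the field $k(\fp)$; these identifications are compatible with the natural base change map. The main obstacle, I expect, will be verifying the quasi-pinning reduction carefully---in particular the $H$- and $W_0$-equivariance of the twist maps and their compatibility with $j$---since the remaining steps are relatively formal consequences of the $\bZ$-case, the field case, and Lemma~\ref{lemma:enough-to-check-GIT-on-primes}.
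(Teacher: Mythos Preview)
Your proposal is correct and follows essentially the same approach as the paper: reduce to the split case via fppf/fpqc descent, use the twist $h \mapsto ht$ to pass from $\sigma$ to a pinning-preserving $\sigma_0$, invoke Proposition~\ref{prop:twisted-chevalley-split} over $\bZ$, and deduce base-change compatibility from Lemma~\ref{lemma:enough-to-check-GIT-on-primes} together with Springer's field-case result. Your write-up is even slightly more explicit than the paper's in spelling out why $A/W_0$ commutes with base change (via the orbit-sum basis of $\bZ[X^*(A)]^{W_0}$) and why $A$ is unchanged when passing from $\sigma$ to $\sigma_0$.
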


\begin{proof}
    Since the formation of GIT quotients commutes with flat base change, we may pass to fppf covers of $X$ at will. In particular, by \cite[XXII, 2.3]{SGA3III}, we may assume that $X = \Spec R$ is affine and $G$ is split, and by \cite[XXIV, 1.3]{SGA3III} we may write $\sigma = \Ad(t) \circ \sigma_1$, where $\sigma_1$ preserves the pinning and $t \in T(R)$. The map $H \to H$ given by $h \mapsto ht$ is an isomorphism which intertwines $\sigma$-twisted conjugacy and $\sigma_1$-twisted conjugacy and thus induces an isomorphism $H/\!/_{\sigma} H \to H/\!/_{\sigma_1} H$. There is a similar isomorphism for $A/W_0$, and thus we may and do pass from $\sigma$ to $\sigma_1$ to assume that $\sigma$ preserves a pinning.\smallskip

    By \cite[XXV, 1.1]{SGA3III}, there is a tuple $(\bH, \bB, \bT, \boldsymbol{\sigma})$ consisting of a split reductive $\bZ$-group scheme $\bH$, a Borel subgroup scheme $\bB \subset \bH$, a maximal torus $\bT \subset \bB$, and a pinning-preserving automorphism $\boldsymbol{\sigma}$ of $\bH$ such that $(\bH, \bB, \bT, \boldsymbol{\sigma}) \otimes R \cong (H, B, T, \sigma)$. Let $\bA$ denote the group scheme of $\boldsymbol{\sigma}$-coinvariants of $\bT$. For every ring $R'$, there is a commutative diagram
    \[
    \begin{tikzcd}
        \bZ[\bG]^{\bG, \boldsymbol{\sigma}} \otimes R' \arrow[r] \arrow[d]
            &\bZ[\bA]^{W_0} \otimes R' \arrow[d] \\
        R'[\bG_{R'}]^{\bG_{R'}, \boldsymbol{\sigma}_{R'}} \arrow[r]
            &R'[\bA_{R'}]^{W_0}.
    \end{tikzcd}
    \]
    By Proposition~\ref{prop:twisted-chevalley-split}, the top and right arrows are isomorphisms, so to prove the theorem it is enough to show that the bottom arrow is an isomorphism for all $R'$. By Lemma~\ref{lemma:enough-to-check-GIT-on-primes}, we need only check this when $R' = \bZ/p$ for some prime number $p$. This case is \cite[Thm.\ 1]{Springer-twisted}.
\end{proof}

\section{Reduction to tame parameters}\label{section:tame-reduction}

In this section, we will describe the decomposition (\ref{align:main-decomp}) promised in the introduction, and we will show that each summand in this decomposition is equivalent to a scheme of ``tame" cocycles. We retain the notation of the introduction, letting $R$ be a $\ov{\bZ}[1/p]$-algebra and letting $D$ be the set of integers which are invertible in $R$. Let $D'$ be the multiplicatively closed subset of $\bZ$ generated by the prime numbers not in $D$. For the basic definitions associated with $\underline{\rm{Z}}^1(W_F^0, \hat{G})$, see Section~\ref{section:example}.

\subsection{Motivation}\label{ss:motivation}

Our construction may initially appear strange, so we begin with two special cases, both of which appear in \cite{DHKM}. Consider first the case $D = \{\pm p^n\colon n \geq 0\}$, which will give rise to the decomposition (\ref{eqn:z-1/p-decomp}). There is a restriction map $r_P\colon \underline{\rm{Z}}^1(W_F^0, \hat{G}) \to \underline{\rm{Z}}^1(P_F, \hat{G})$, and it is shown in \cite[3.1]{DHKM} that this is a discrete invariant: in other words, there is a decomposition
\[
\underline{\rm{Z}}^1(W_F^0, \hat{G})_{\ov{\bZ}[1/p]} = \coprod_{\phi \in \Phi} \hat{G} \cdot r_P^{-1}(\phi)
\]
for some set of sections $\Phi \subset \underline{\rm{Z}}^1(P_F, \hat{G})(\ov{\bZ}[1/p])$. Given $\phi \in \Phi$, there is a closed subgroup scheme
\[
Z_{\prescript{L}{}{G}}(\phi) = \{(g, w) \in \LG \colon (g, w)\prescript{L}{}{\phi}(w^{-1}pw)(g, w)^{-1} = \prescript{L}{}{\phi}(p) \,\forall p \in P_F\},
\]
and if $\vp$ extends $\phi$ then $\vp$ factors through $Z_{\prescript{L}{}{G}}(\phi)$. This gives rise to an induced homomorphism $\alpha_{\vp}\colon W_F^0 \to \pi_0(Z_{\prescript{L}{}{G}}(\phi))$. Again $\alpha_{\vp}$ is a discrete invariant of $r_P^{-1}(\phi)$: if $\Sigma(\phi)$ is the set of such $\alpha_\vp$, then there is a decomposition
\[
    \underline{\rm{Z}}^1(W_F^0, \hat{G})_{\ov{\bZ}[1/p]} = \coprod_{\substack{\phi \in \Phi \\ \alpha \in \Sigma(\phi)}} \hat{G} \cdot r_P^{-1}(\phi)_\alpha,
\]
where $r_P^{-1}(\phi)_{\alpha}$ is the clopen subscheme of $r_P^{-1}(\phi)$ consisting of those cocycles $\vp$ such that $\alpha_{\vp} = \alpha$. This is (\ref{eqn:z-1/p-decomp}). The content of ``tame reduction" in this case is that each $r_P^{-1}(\phi)_\alpha$ is isomorphic to a scheme of tame cocycles $\underline{\rm{Z}}^1(W_F^0/P_F, Z_{\LG}(\phi)^0)_{\ov{\bZ}[1/p]}$, where $Z_{\LG}(\phi)^0$ is equipped with a finite $W_F$-action preserving a Borel pair.\footnote{Crucially, even if the $W_F$-action on $\hat{G}$ is defined over $\bZ[1/p]$ and preserves a pinning, we do not know whether we can ensure the same for the $W_F$-action on $Z_{\LG}(\phi)^0$. This justifies working in the generality we do.} This reduces one to understanding spaces of \textit{tame} cocycles.\smallskip

Next consider the other extreme case $D = \{n\colon \ell \nmid n\}$, where $\ell$ is a fixed prime, and let us assume for simplicity that $R = \ov{\bF}_{\ell}$; the case $R = \ov{\bZ}_{\ell}$ is similar. Now \cite[4.1]{DHKM} shows that if $A$ is an $\ov{\bF}_\ell$-algebra, then $\underline{\rm{Z}}^1(W_F^0, \hat{G})_{\ov{\bF}_{\ell}}(A)$ parameterizes \textit{continuous} $1$-cocycles $W_F \to \hat{G}(A)$, where $\hat{G}(A)$ is equipped with the discrete topology. Thus there is a natural restriction map $r^{\ell}\colon \underline{\rm{Z}}^1(W_F^0, \hat{G})_{\ov{\bF}_{\ell}} \to \underline{\rm{Z}}^1_{\rm{cts}}(I_F^{\ell}, \hat{G})_{\ov{\bF}_{\ell}}$, where $I_F^{\ell}$ denotes the maximal closed subgroup of $I_F$ of pro-order prime to $\ell$. Using a similar argument to the above, one shows that there is a decomposition
\[
\underline{\rm{Z}}^1(W_F^0, \hat{G})_{\ov{\bF}_{\ell}} = \coprod_{\substack{\phi^\ell \in \Phi^\ell \\ \alpha^\ell \in \Sigma(\phi^\ell)}} \hat{G}_{\ov{\bF}_\ell} \cdot (r^\ell)^{-1}(\phi^\ell)_{\alpha^\ell},
\]
where $\Phi^\ell$ is a set of continuous cocycles $I_F^\ell \to \hat{G}(\ov{\bF}_\ell)$ and $\Sigma(\phi^{\ell})$ is a set of component group maps $W_F^0 \to \pi_0(Z_{\LG}(\phi^{\ell})$. Here the content of ``tame reduction" is that for each nonempty $(r^\ell)^{-1}(\phi^\ell)_{\alpha^\ell}$, there is an isomorphism $(r^\ell)^{-1}(\phi^\ell)_{\alpha^\ell} \cong \underline{\rm{Z}}^1(W_F/I_F^\ell, Z_{\LG}(\phi^\ell)^0)_{\ov{\bF}_{\ell}}$. Again, this reduces one to studying spaces of cocycles which one might call ``$\ell'$-tame". \smallskip

For more general $R$, to obtain a similar decomposition, we will consider a reduction map $r_D$ which, informally, records the restriction of a cocycle $\vp$ to the maximal pro-$D$ subgroup of $I_F$. To see how to formulate this precisely, let us reframe the previous case: for simplicity, assume that $W_F$ acts trivially on $\hat{G}$ and consider a continuous homomorphism $f\colon W_F/P_F \to \hat{G}(\ov{\bQ}_\ell)$. If $f(s) = tu$ is the Jordan decomposition in $\hat{G}(\ov{\bQ}_\ell)$, then $t$ is of some finite order $n = \ell^c d$, where $\ell \nmid d$. The restriction of $f$ to $I_F/P_F$ is determined by $c$ and $t^{\ell^c}$, so there is a restriction map
\[
\Hom(W_F, \hat{G}(\ov{\bQ}_\ell)) \to \varinjlim_{c, d} \Hom(\langle s^{\ell^c} \rangle/\langle s^{\ell^c d} \rangle, \hat{G}(\ov{\bQ}_\ell))
\]
given by sending $f$ to the map $s^{\ell^c m} \mapsto t^{\ell^c m}$, under which the image of $f$ determines $f|_{I_F^{\ell}}$.\smallskip

Because each quotient $\langle s^{\ell^c} \rangle/\langle s^{\ell^c d} \rangle$ is finite, the above discussion suggests an algebraic way to formulate a restriction map $r^\ell$, which can be defined without reference to the profinite group $W_F$. In this form, the generalization to $r_D$ is simple, although a modicum of care is needed since the Jordan decomposition need not exist over general rings. After defining $r_D$, our version of tame reduction reduces one to studying spaces of ``$D$-tame" cocycles.

\subsection{Constructions}

We now begin by defining $r_D$. Let $\sU_{\hat{G}}$ be the unipotent scheme of $\hat{G}$, as defined in \cite[\S 4]{Cotner-Springer} to be the schematic closure of the unipotent variety of $\hat{G}_{\bQ}$. We note that this definition is not completely standard when $\hat{G}$ is not simply connected, but it is determined by the conditions that $\sU_{\hat{G}}$ is $\bZ$-flat and $\sU_{\hat{G}}(k)$ is the set of unipotent elements of $\hat{G}(k)$ for every field $k$. We warn the reader that the fibers of $\sU_{\hat{G}}$ over $\bZ$ are not generally reduced.

\begin{lemma}\label{lemma:power-iso-unipotent-scheme}
    If $N \in D$, then the $N$th power map $[N]\colon (\sU_{\hat{G}})_{\bZ[D^{-1}]} \to (\sU_{\hat{G}})_{\bZ[D^{-1}]}$ is a $\hat{G}$-equivariant isomorphism.
\end{lemma}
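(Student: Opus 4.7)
The plan is to construct an explicit two-sided inverse $r_N$ to $[N]$ via the binomial series $(1+y)^{1/N}=\sum_{k\geq 0}\binom{1/N}{k}y^{k}$ and to verify the required identities on the characteristic-zero fibre, extending by $\bZ$-flatness of $\sU_{\hat{G}}$.

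First I would fix a faithful representation $\hat{G}\hookrightarrow\GL_m$. Since every unipotent $g\in\GL_m(\bQ)$ satisfies $(g-1)^m=0$, the same relation holds on $\sU_{\hat{G},\bQ}$, and hence (because $\sU_{\hat{G}}$ is by construction the schematic closure of its generic fibre in $\hat{G}$) on all of $\sU_{\hat{G}}$. Over $\bZ[D^{-1}]$, where $1/N\in\bZ[1/N]\subseteq\bZ[D^{-1}]$, the truncated binomial formula
\[
\tilde r_{N}(u)\;=\;\sum_{k=0}^{m-1}\binom{1/N}{k}(u-1)^{k}
\]
then defines a morphism $\tilde r_{N}\colon(\sU_{\hat{G}})_{\bZ[D^{-1}]}\to(\GL_m)_{\bZ[D^{-1}]}$, and the formal identities $((1+y)^{1/N})^{N}=1+y=((1+y)^{N})^{1/N}$ in $\bZ[1/N][[y]]/(y^{m})$ say exactly that $\tilde r_{N}$ and the $N$th power map are mutually inverse automorphisms of the ambient scheme $(\sU_{\GL_m})_{\bZ[D^{-1}]}$.

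The nontrivial step is to show that $\tilde r_{N}$ factors through the closed subscheme $\sU_{\hat{G}}\subset\GL_{m}$. Over $\bQ$ this is classical: $\exp$ and $\log$ provide inverse $\hat{G}$-equivariant isomorphisms between $\sU_{\hat{G},\bQ}$ and the nilpotent cone in $\Lie(\hat{G})_{\bQ}$, and under this identification $[N]$ becomes multiplication by $N$, so its inverse $\tilde r_{N}$ corresponds to multiplication by $1/N$ and carries $\sU_{\hat{G},\bQ}$ into itself. The preimage $\tilde r_{N}^{-1}(\sU_{\hat{G}})$ is thus a closed subscheme of $(\sU_{\hat{G}})_{\bZ[D^{-1}]}$ whose generic fibre is everything; its defining ideal is both $\bZ[D^{-1}]$-torsion (by the generic vanishing) and torsion-free (as a subsheaf of the $\bZ[D^{-1}]$-flat structure sheaf of $\sU_{\hat{G}}$), hence zero. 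The same density argument promotes the identities $r_N\circ[N]=[N]\circ r_N=\mathrm{id}$ from the generic fibre (where they are the binomial functional equation) to all of $(\sU_{\hat{G}})_{\bZ[D^{-1}]}$.

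The $\hat{G}$-equivariance of $[N]$ for the conjugation action is immediate from $gu^{N}g^{-1}=(gug^{-1})^{N}$, and it transfers to $r_{N}$ by uniqueness of the inverse. The only point requiring any care is the characteristic-zero verification that $\tilde r_{N}$ actually lands in $\hat{G}\subset\GL_m$, which is where I rely on $\exp/\log$ to put $u$ and $\tilde r_{N}(u)$ simultaneously into a one-parameter unipotent subgroup through $u$ in $\hat{G}_{\bQ}$; everything else in the argument is a formal consequence of the binomial identity together with $\bZ$-flatness of $\sU_{\hat{G}}$.
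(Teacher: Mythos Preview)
Your proof is correct but proceeds quite differently from the paper's. The paper invokes the fibral isomorphism criterion \cite[IV\textsubscript{4}, 17.9.5]{EGA}: since $\sU_{\hat G}$ is flat and of finite presentation over $\bZ[D^{-1}]$, it suffices to check that $[N]_k$ is an isomorphism for every field $k$ over $\bZ[D^{-1}]$. In characteristic $0$ this is the exponential map; in characteristic $\ell$ (with $\ell\nmid N$), one observes that $[\ell^n]$ kills $(\sU_{\hat G})_k$ for large $n$, so if $NM\equiv 1\pmod{\ell^n}$ then $[M]$ inverts $[N]$.

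Your approach instead constructs a single global inverse via the truncated binomial series $\tilde r_N(u)=\sum_{k<m}\binom{1/N}{k}(u-1)^k$ after fixing an embedding $\hat G\hookrightarrow\GL_m$, using the (correct and crucial) fact that $\binom{1/N}{k}\in\bZ[1/N]$, and then appeals to $\bZ$-flatness of $\sU_{\hat G}$ to promote the factorization $\tilde r_N\colon\sU_{\hat G}\to\sU_{\hat G}$ and the inverse identities from the generic fiber. This buys you an explicit formula for $[N]^{-1}$ and avoids the fiber-by-fiber case split, at the cost of choosing an ambient $\GL_m$ and running the flatness/density argument twice. The paper's route is shorter and avoids any embedding, but is less constructive.
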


\begin{proof}
    Equivariance is clear. By the fibral isomorphism criterion \cite[IV\textsubscript{4}, 17.9.5]{EGA}, it suffices to show that for every field $k$ over $\bZ[D^{-1}]$, the map $[N]_k$ is an isomorphism. If $\chara k = 0$, then this is clear from the existence of the exponential map. If $\chara k = p > 0$, then there is some $n$ such that $[p^n]_k$ kills $(\sU_{\hat{G}})_k$: to see this, we can reduce to $\hat{G} = \GL_n$, in which case $\sU_{\hat{G}}$ is reduced and we reduce to the fact that $\sU_{\GL_n}(k)$ is killed by $p^n$. If $M$ is such that $NM \equiv 1 \pmod{p^n}$, then $[NM]_k$ is the identity on $\sU_{\hat{G}}$ and thus $[N]_k$ is an isomorphism, as desired.
\end{proof}

\begin{lemma}\label{lemma:bounded-order-of-inertia}
    Let $P$ be an open subgroup of $P_F$ which is normal in $W_F^0$. There is an integer $n = n_P$ such that $s^n$ acts trivially on $P_F/P$ and for every $\bZ[1/p]$-algebra $R$ and every cocycle $\vp\colon W_F^0 \to \hat{G}(R)$, the power $\Lvp(s)^n$ lies in $\sU_{\hat{G}}(R)$.
\end{lemma}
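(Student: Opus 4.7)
The first claim, that $s^n$ acts trivially on $P_F/P$ for some $n$, is immediate from finiteness: since $P$ is open in the pro-$p$ group $P_F$ and normal in $W_F^0$, the quotient $P_F/P$ is a finite $p$-group, hence $\Aut(P_F/P)$ is finite and conjugation by $s$ has some finite order $n_0$.

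For the second claim I interpret ``every cocycle $\vp$'' as every cocycle whose kernel contains $P$, i.e.\ every cocycle in $\underline{\rm Z}^1(W_F^0/P, \hat G)$; this is the only natural reading under which a $P$-dependent uniform bound $n_P$ can exist, since cocycles with much smaller open kernel can exhibit $\vp(s)$ with semisimple part of arbitrarily high order. The plan is first to reduce to the case $R = k$ a field: the locus where $\Lvp(s)^n$ factors through the closed subscheme $\sU_{\hat G} \subset \hat G$ is closed in $\underline{\rm Z}^1(W_F^0/P, \hat G)$, which is $\bZ[1/p]$-flat by Lemma~\ref{lemma:cocycles-pure}, so checking the condition on all field points suffices. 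I would then take $n$ to be a common multiple of $n_0$, the order $m$ of the image $s_W$ of $s$ in the finite quotient $W$ of $W_F$ through which the action on $\hat G$ factors, the exponent of $P_F/P$, and an integer $N$ to be chosen below.

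The core computation uses the Weil-group relation lifted to $W_F^0/P$ in the form $\rm{Fr} \cdot s \cdot \rm{Fr}^{-1} = s^q \pi_0$ for some $\pi_0 \in P_F/P$. Because $s^{n_0}$ is central in $\langle s, P_F/P \rangle$, expanding $(s^q \pi_0)^n$ and sweeping all $s$-powers to the left shows $\rm{Fr} \cdot s^n \cdot \rm{Fr}^{-1} = s^{qn} \pi_n$ for some $\pi_n \in P_F/P$ (necessarily with trivial image in $W$, by matching $W$-components). Applying $\Lvp$ and using $m \mid n$ so that $\Lvp(s)^n = \vp(s^n) \in \hat G(k)$, the identity becomes $\vp(\rm{Fr}) \cdot \prescript{\rm{Fr}_W}{}{\vp(s^n)} \cdot \vp(\rm{Fr})^{-1} = \vp(s^{qn}) \vp(\pi_n)$ in $\hat G(k)$, where $\vp(\pi_n) \in \vp(P_F)$ commutes with $\vp(s^n)$ (from the same central commutation) and has order bounded uniformly by the exponent of $P_F/P$. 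Raising to the exponent of $P_F/P$ (absorbed into $n$), the error $\vp(\pi_n)$ disappears, so the semisimple part $t_0$ of $\vp(s^n)$ satisfies $\vp(\rm{Fr}) \cdot \prescript{\rm{Fr}_W}{}{t_0} \cdot \vp(\rm{Fr})^{-1} = t_0^q$. A standard eigenvalue argument --- the $q$-power map permutes eigenvalues of $t_0$ modulo a finite action by $W$ --- then forces each eigenvalue to be a root of unity of order dividing $q^d - 1$ for some $d$ bounded by $\rk \hat G \cdot |W|$, so $t_0^N = 1$ for $N = \prod_d(q^d - 1)$. Incorporating $N$ into $n$ kills the semisimple part and gives $\vp(s^n) \in \sU_{\hat G}(k)$.

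The main obstacle I anticipate is the bookkeeping: controlling $\vp(\pi_n)$ uniformly in $\vp$ (using the crucial a priori bound $|\vp(P_F)| \leq |P_F/P|$, which is exactly where the factoring-through-$P$ hypothesis is used) and tracking the $W$-twist through the eigenvalue argument so that the cyclotomic bound $d \leq \rk \hat G \cdot |W|$ is genuinely independent of $\vp$. Once these uniform bounds are in place, the reduction to field points through purity (Lemma~\ref{lemma:cocycles-pure}) propagates unipotency to every $\bZ[1/p]$-algebra $R$.
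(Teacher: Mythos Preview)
Your eigenvalue argument for the field case is essentially correct and reproduces (a special case of) \cite[2.2]{DHKM}, which the paper simply cites as a black box. The propagation step is where your argument has a gap: ``flat, so checking on all field points suffices'' is not valid, since a closed subscheme of a flat (even pure) scheme containing all field points need not be the whole scheme unless the ambient scheme is reduced. The paper instead uses that the generic fiber $\underline{\rm{Z}}^1(W_F^0,\hat{G})_{\bQ}$ is reduced (\cite[4.1]{DHKM}), so $\vp \mapsto \Lvp(s)^n$ factors through $\sU_{\hat{G}}$ over $\bQ$, and then flatness of the source together with the definition of $\sU_{\hat{G}}$ as the schematic closure of its $\bQ$-fiber pushes this down to $\bZ[1/p]$. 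You could also repair your version by noting directly that $\underline{\rm{Z}}^1(W_F^0/P,\hat{G})$ is reduced (syntomic over a regular base with generically smooth fiber, hence $R_0 + S_1$).

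On scope: the paper's statement and proof, via \cite[2.2]{DHKM}, give a single $n$ working for \emph{all} cocycles with open kernel, with the $P$-dependence of $n_P$ arising only from the condition that $s^n$ act trivially on $P_F/P$. Your restricted reading (cocycles trivial on $P$) does suffice for the lemma's only application downstream, but your assertion that a uniform bound cannot exist contradicts \cite[2.2]{DHKM}.
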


\begin{proof}
    By \cite[2.2]{DHKM}, there is some $n$ such that for every field $k$ over $\bZ[1/p]$ and every $\vp\colon W_F^0 \to \hat{G}(k)$, the power $\Lvp(s)^n$ lies in $\sU_{\hat{G}}(k)$. Thus, since $\underline{\rm{Z}}^1(W_F^0, \hat{G})_{\bQ}$ is reduced \cite[4.1]{DHKM}, the map $\pi_n\colon \underline{\rm{Z}}^1(W_F^0, \hat{G})_{\bZ[1/p]} \to \hat{G}$ given by $\vp \mapsto \Lvp(s)^n$ factors through $\sU_{\hat{G}}$ generically. By Lemma~\ref{lemma:cocycles-pure}, the source is flat and thus $\pi_n$ factors through $\sU_{\hat{G}}$. Since $P_F/P$ is finite, we may pass to a multiple of $n$ to assume $s^n$ acts trivially on $P_F/P$.
\end{proof}

If $P$ is an open subgroup of $P_F$ which is normal in $W_F^0$ and $n \geq 1$ is such that $s^n$ acts trivially on $P_F/P$, let $I_{F, n, P}^0$ denote the subgroup of $I_F^0$ generated by $s^n$ and $P$. Let $I_{F, n}^0 = I_{F, n, P_F}^0$. If $n$ is as in Lemma~\ref{lemma:bounded-order-of-inertia}, write $n = ab$ for $a \in D$ and $b \in D'$. Define a $\bZ[D^{-1}]$-morphism 
\[
r_{n, P, D}\colon \underline{\rm{Z}}^1(W_F^0/P, \hat{G})_{\bZ[D^{-1}]} \to \underline{\rm{Z}}^1(I_{F, b}^0/I_{F, n, P}^0, \hat{G})_{\bZ[D^{-1}]}
\]
as follows: if $R$ is a $\bZ[D^{-1}]$-algebra and $\vp\colon W_F^0/P \to \hat{G}(R)$ is a cocycle, note that $\Lvp(s)^{ab}$ is unipotent, so by Lemma~\ref{lemma:power-iso-unipotent-scheme} there is a unique element $u \in \sU_{\hat{G}}(R)$ whose $a$th power is $\Lvp(s)^n$. Since $[a]$ is a $\hat{G}$-equivariant automorphism of $\sU_{\hat{G}}$, it follows that $u$ commutes with $\Lvp(p)$ for all $p \in P_F$ and with $\Lvp(s)^b$. Thus we may define $\vp_{P, D} = r_{n, P, D}(\vp)$ via $\vp_{n, P, D}|_{P_F} = \vp|_{P_F}$ and $\Lvp_{P, D}(s^b) = u^{-1} \cdot \Lvp(s)^b$.\smallskip

Since $I_{F, b}^0/I_{F, n, P}^0$ is a finite group of order in $D$ for any $n$, \cite[A.9]{DHKM} shows that there is a set $\Phi_{P, n} \subset \rm{Z}^1(I_{F, b}^0/I_{F, n, P}^0, \hat{G}(\ov{\bZ}[D^{-1}]))$ such that
\[
\underline{\rm{Z}}^1(I_{F, b}^0/I_{F, n, P}^0, \hat{G})_{\ov{\bZ}[D^{-1}]} = \coprod_{\phi_n \in \Phi_{P, n}} \hat{G}_{\ov{\bZ}[D^{-1}]} \cdot \phi_n,
\]
where $\hat{G}_{\ov{\bZ}[D^{-1}]} \cdot \phi_n$ denotes the orbit of $\phi_n$, isomorphic to $\hat{G}_{\ov{\bZ}[D^{-1}]}/Z_{\hat{G}_{\ov{\bZ}[D^{-1}]}}(\phi_n)$. In particular, if $P' \subset P$ and $n \mid n' = a'b'$, where $n' = n_{P'}$ as in Lemma~\ref{lemma:bounded-order-of-inertia}, then the natural map $\underline{\rm{Z}}^1(I_{F, b}^0/I_{F, n, P}^0, \hat{G})_{\ov{\bZ}[D^{-1}]} \to \underline{\rm{Z}}^1(I_{F, b'}^0/I_{F, n', P'}^0, \hat{G})_{\ov{\bZ}[D^{-1}]}$ is a clopen embedding, and the maps $r_{n, P, D}$ and $r_{n', P', D}$ are compatible in the obvious sense. Therefore we may define
\[
r_D \coloneqq \varinjlim_{n, P} r_{n, P, D} \colon \underline{\rm{Z}}^1(W_F^0, \hat{G})_{\bZ[D^{-1}]} \to X_D \coloneqq \varinjlim_{n, P} \underline{\rm{Z}}^1(I_{F, b}^0/I_{F, n, P}^0, \hat{G})_{\bZ[D^{-1}]}
\]
because $\underline{\rm{Z}}^1(W_F^0, \hat{G}) = \varinjlim_P \underline{\rm{Z}}^1(W_F^0/P, \hat{G})$.\smallskip

If $R$ is a $\bZ[D^{-1}]$-algebra and $\phi_D \in X_D(R)$, we define
\[
Z_{\LG_R}(\phi_D) = \{(g, w) \in \LG_R\colon (g, w)\cdot \prescript{L}{}{\phi}_D \cdot (g, w)^{-1} = \prescript{L}{}{\phi}_D \circ \Ad(w)\}
\]
and $Z_{\hat{G}_R}(\phi_D) = Z_{\LG_R}(\phi_D) \cap \hat{G}_R$. A key observation is that, if $\vp\colon W_F^0 \to \hat{G}(R)$ is a cocycle such that $\vp(I_F^0)$ is finite, then $\Lvp$ factors through $Z_{\LG_R}(r_D(\vp))$.

\begin{theorem}\label{theorem:DHKM-3.1}
    There is a set $\Phi_D \subset X_D(\ov{\bZ}[D^{-1}])$ such that
    \[
    X_D = \coprod_{\phi_D \in \Phi_D} \hat{G} \cdot \phi_D
    \]
    For each $\phi_D \in \Phi_D$, the centralizer $Z_{\hat{G}_{\ov{\bZ}[D^{-1}]}}(\phi_D)$ is a smooth affine $\ov{\bZ}[D^{-1}]$-group scheme with split reductive identity component and constant component group.
\end{theorem}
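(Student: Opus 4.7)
The plan is to deduce the decomposition for $X_D$ from the finite-level decompositions recalled in the paragraph preceding the theorem, by passing to the colimit. Set $\Gamma_{n,P} = I_{F,b}^0/I_{F,n,P}^0$, a finite group whose order lies in $D$. At each level $(n,P)$, \cite[A.9]{DHKM} provides the decomposition
\[
\underline{\rm{Z}}^1(\Gamma_{n,P}, \hat{G})_{\ov{\bZ}[D^{-1}]} = \coprod_{\phi \in \Phi_{P,n}} \hat{G}_{\ov{\bZ}[D^{-1}]} \cdot \phi,
\]
and in fact the same reference furnishes the structural claims on each stabilizer $Z_{\hat{G}}(\phi)$: it is a smooth affine $\ov{\bZ}[D^{-1}]$-group scheme with split reductive identity component and constant component group. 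These structural facts are standard for centralizers of finite-order cocycles whose order is invertible on the base, via arguments of Steinberg type; over $\ov{\bZ}[D^{-1}]$, constancy of the component group is automatic for \'etale group schemes of order in $D$.

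The second step is to promote these decompositions to the colimit. The transition maps between consecutive levels are $\hat{G}$-equivariant clopen embeddings, as was observed in the paragraph preceding the theorem. Being injective and $\hat{G}$-equivariant, they preserve both the orbit decomposition --- distinct orbits inject as distinct clopen orbits in the target --- and the schematic stabilizers: if $\phi'$ denotes the image of $\phi$ at the larger level, then $Z_{\hat{G}}(\phi') = Z_{\hat{G}}(\phi)$ as subgroup schemes of $\hat{G}$. I would then choose representative sets $\Phi_{P,n}$ compatibly along the filtered system (this is possible since each finite-level orbit set is finite and the transition maps are orbit-preserving injections), and let $\Phi_D$ be their union inside $X_D(\ov{\bZ}[D^{-1}])$. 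The colimit of the finite-level decompositions then yields
\[
X_D = \coprod_{\phi_D \in \Phi_D} \hat{G} \cdot \phi_D,
\]
and for any $\phi_D \in \Phi_D$ the centralizer $Z_{\hat{G}}(\phi_D)$ coincides with $Z_{\hat{G}}(\phi_{n,P})$ for any level $(n,P)$ through which $\phi_D$ factors; the claimed structural properties therefore descend from the finite-level case.

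The substantive content lies in the finite-level decomposition \cite[A.9]{DHKM}. The remainder is bookkeeping: choosing compatible representatives across the directed system and verifying that the transition maps preserve centralizers and not merely orbits. The latter is immediate from $\hat{G}$-equivariant injectivity of the transition maps, since the schematic stabilizer of a point is functorial under equivariant monomorphisms. I do not foresee any subtler obstacle.
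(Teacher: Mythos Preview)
Your proposal is correct and follows essentially the same approach as the paper: reduce to finite levels via the DHKM appendix, then pass to the colimit using that the transition maps are $\hat{G}$-equivariant clopen embeddings and hence preserve stabilizers. The only minor discrepancy is bibliographic: the paper attributes the structural claims on the centralizers to \cite[A.12, A.13]{DHKM} (arguing as in \cite[3.1]{DHKM}) rather than to \cite[A.9]{DHKM}, and it phrases the key stabilizer identification at the level of $Z_{\LG}$ rather than $Z_{\hat{G}}$, but the mathematical content is the same.
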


\begin{proof}
    The points not already observed above follow from \cite[A.12, A.13]{DHKM} as in \cite[3.1]{DHKM}; the key point is that, if $\phi_D$ is represented by a cocycle $\phi_n\colon I_{F, b}^0/I_{F, n, P}^0 \to \hat{G}(\ov{\bZ}[D^{-1}])$, then $Z_{\LG}(\phi_D) = Z_{\LG}(\phi_n)$.
\end{proof}

By Theorem~\ref{theorem:DHKM-3.1}, for every $\phi_D \in \Phi_D$ there is an exact sequence
\[
1 \to \pi_0(Z_{\hat{G}}(\phi_D)) \to \widetilde{\pi}_0(\phi_D) \coloneqq \pi_0(Z_{\LG}(\phi_D)) \to W.
\]
If $R$ is a $\ov{\bZ}[D^{-1}]$-algebra and $\vp\colon W_F^0 \to \hat{G}(R)$ restricts to $\phi_D$, then there is an induced homomorphism $\alpha_{\vp}\colon W_F^0 \to \widetilde{\pi}_0(\phi_D)$, and we see that the image of $\alpha_{\vp}$ in $\varinjlim_{n, P} \Hom(I_{F, b}^0/I_{F,n,P}, \widetilde{\pi}_0(\phi_D))$ agrees with the element induced by $\phi_D$. Moreover, the composition $W_F^0 \to \widetilde{\pi}_0(\phi_D) \to W$ is the natural projection map. Let $\Sigma(\phi_D)$ be the set of homomorphisms $\alpha_D$ satisfying these two conditions. By Theorem~\ref{theorem:DHKM-3.1}, there is a disjoint union decomposition
\[
\underline{\rm{Z}}^1(W_F^0, \hat{G})_{\ov{\bZ}[D^{-1}]} = \coprod_{\substack{
    \phi_D \in \Phi_D \\
    \alpha_D \in \Sigma(\phi_D)}} \hat{G} \times^{Z_{\hat{G}}(\phi_D)} r_D^{-1}(\phi_D)_{\alpha_D},
\]
where by definition $r_D^{-1}(\phi_D)_{\alpha_D}$ is the closed subscheme of $\underline{\rm{Z}}^1(W_F^0, \hat{G})_{\ov{\bZ}[D^{-1}]}$ consisting of those cocycles $\vp$ with $r_D(\vp) = \phi_D$ and $\alpha_{\vp} = \alpha_D$. We call the pair $(\phi_D, \alpha_D)$ \textit{admissible} if $\underline{\rm{Z}}^1(W_F^0, \hat{G})_{\phi_D, \alpha_D} \coloneqq \hat{G} \times^{Z_{\hat{G}}(\phi_D)} r_D^{-1}(\phi_D)_{\alpha_D}$ is nonempty. Note that, in view of this decomposition and \cite[4.1]{DHKM}, each $\underline{\rm{Z}}^1(W_F^0, \hat{G})_{\phi_D, \alpha_D}$ is syntomic.\smallskip

We can now state the main theorem of this paper, which was stated imprecisely and in a slightly weaker form in Theorem~\ref{theorem:intro-MAIN-main}.

\begin{theorem}\label{theorem:main-general}
    Let $\phi_D \in \Phi_D$ and $\alpha_D \in \Sigma(\phi_D)$ be admissible, and let $R$ be an integral domain over $\ov{\bZ}[1/p]$ with $D_R = D$. Then $\underline{\rm{Z}}^1(W_F^0, \hat{G})_{\phi_D, \alpha_D} \otimes_{\ov{\bZ}[D^{-1}]} R$ is connected.
\end{theorem}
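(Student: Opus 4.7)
The quotient morphism $\hat{G}\times r_D^{-1}(\phi_D)_{\alpha_D}\to \underline{\rm{Z}}^1(W_F^0,\hat{G})_{\phi_D,\alpha_D}$ is surjective, so since $\hat{G}$ is geometrically connected it suffices to prove that $r_D^{-1}(\phi_D)_{\alpha_D}\otimes_{\ov{\bZ}[D^{-1}]} R$ is connected. Mimicking the ``tame reduction'' of \cite[\S 4]{DHKM} as sketched in Section~\ref{ss:motivation}, I would identify this scheme with a moduli space of tame cocycles $W_F^0/P_F\to \hat{H}$, where $\hat{H}\coloneqq Z_{\LG}(\phi_D)^0$ is (by Theorem~\ref{theorem:DHKM-3.1}) a split reductive $\ov{\bZ}[D^{-1}]$-group scheme equipped with a finite $W_F$-action preserving a Borel pair, with the appropriate twist coming from $\alpha_D$. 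A further reduction via central isogenies (along the lines of Section~\ref{ss:sc}) brings us to the case that $\hat{H}$ is semisimple and simply connected.

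\textbf{Purity splitting and connected fibers.} Write $\sigma$ for the action of $s$ on $\hat{H}$ and define
\[
\ov{\Sigma}\colon \underline{\rm{Z}}^1(W_F^0/P_F,\hat{H})\longrightarrow \hat{H}/\!/_{\sigma}\hat{H},\qquad \vp\mapsto [\vp(s)].
\]
By Theorem~\ref{theorem:twisted-chevalley} the target equals $A/W_0$, which is finite flat over $\ov{\bZ}[D^{-1}]$, while by Lemma~\ref{lemma:cocycles-pure} the source is pure. Restricting the target to the schematic image of $\ov{\Sigma}$ and applying Lemma~\ref{lemma:pure-to-finite-connected}, the problem splits into (a) connectedness of every geometric fiber of $\ov{\Sigma}$ over a field, and (b) connectedness of $(\im\ov{\Sigma})_R$. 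For (a), given a cocycle $\vp$ over a field $k$ with Jordan decomposition $\vp(s)=tu$, Lemma~\ref{lemma:power-iso-unipotent-scheme} lets me connect $\vp$ to a cocycle with semisimple value at $s$ through an $\bA^1$-family, reducing to the semisimple case. Then Lemma~\ref{lemma:bz-semisimple} combined with Theorems~\ref{theorem:closed-orbit} and~\ref{theorem:bmr-normal} identifies the fiber with a homogeneous space under $\hat{H}$ whose point stabilizer is a $\sigma$-twisted centralizer of a torus element of $\hat{H}$; since $\hat{H}$ is simply connected, this stabilizer is connected reductive, so the fiber is connected.

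\textbf{Image connectedness (the main obstacle).} For (b), I would write $(\im\ov{\Sigma})_R=\bigcup_{w\in W_0}(C_w)_R$, where $C_w\subset A/W_0$ is the closed subscheme cut out by realizing the cocycle relation $\Lvp(\rm{Fr})\Lvp(s)\Lvp(\rm{Fr})^{-1}=\Lvp(s)^q$ with Weyl component $w$ for $\Lvp(\rm{Fr})$. Each $C_w$ is a twisted quotient of a multiplicative-type $\ov{\bZ}[D^{-1}]$-group scheme of order prime to $p$, hence connected. The task is then to chain the $C_w$'s together through meetings in residue characteristics $\ell\in D$. When $Z(\hat{H})$ has order divisible by at most one prime -- covering every simple type outside type A -- one locates a single $\ov{\bF}_\ell$-point lying in every $C_w$ by lifting a $\hat{H}^{\rm{ad}}$-level problem to $\hat{H}$. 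I expect the genuine difficulty, as signalled by Example~\ref{example:type-A}, to concentrate in type A, where no such common point exists in general and the $C_w$'s must be linked through several distinct primes at once; the delicate combinatorial argument needed for type A is where the technical weight of the proof lies and would be relegated to Appendix~\ref{appendix:a}.
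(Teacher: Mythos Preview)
Your overall architecture matches the paper's: tame reduction to $\underline{\rm{Z}}^1(W_F^0/P_F,\hat H)^{r_D=1}$, passage to the simply connected cover, the map $\ov\Sigma$ to the twisted adjoint quotient, and the splitting via Lemma~\ref{lemma:pure-to-finite-connected} into ``fibers connected'' plus ``image connected''. Three points, however, are not right as written.

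First, the $\bA^1$-deformation to quasi-semisimple inertia does not come from Jordan decomposition or Lemma~\ref{lemma:power-iso-unipotent-scheme}. That route is exactly the $\SL_2$-specific shortcut of Section~\ref{section:example}, relying on an exponential which is unavailable for general $\hat H$ in positive characteristic; Lemma~\ref{lemma:power-iso-unipotent-scheme} only inverts integer powers on $\sU_{\hat H}$ and gives no $\bA^1$-parameterization. The paper instead applies the Hilbert--Mumford criterion (via Theorem~\ref{theorem:closed-orbit}) to obtain a cocharacter $\lambda$ with $\lim_{t\to 0}\Ad(\lambda(t))\circ\Lvp$ completely reducible, and then invokes Theorem~\ref{theorem:bmr-normal} and Lemma~\ref{lemma:bz-semisimple} to land in the quasi-semisimple locus (Lemma~\ref{lemma:reduce-to-ss}). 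You cite these lemmas but assign them the wrong job.

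Second, $A/W_0\cong \hat H/\!/_s \hat H$ is \emph{not} finite over $\ov{\bZ}[D^{-1}]$; it has positive relative dimension. What is finite is the schematic image $\im\ov\Sigma_{D'}$, and this requires an argument: the paper shows $\ov\Sigma$ factors through the fixed-point scheme $B_{\hat H}[D']=(\hat H/\!/_s\hat H)^{\rm{Fr}^{-1}[q]}[D']$ and proves finiteness of $B_{\hat H}$ by covering it with the finite schemes $A_w[D']$ (Lemmas~\ref{lemma:Aw-B-surjective}, \ref{lemma:finite-image-git}). Without this you cannot invoke Lemma~\ref{lemma:pure-to-finite-connected}.

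Third, the chaining of the $C_w$ happens at primes $\ell\in D'$, not $\ell\in D$: by definition $D$ consists of the integers invertible in $R$, so there are no residue fields of characteristic $\ell$ for $\ell\in D$. Concretely, in Proposition~\ref{prop:eigenvalues-connected} one takes $\ell\in D'$ dividing $|Z(\hat H)|$ and exhibits a common $\ov\bF_\ell$-point (namely $t^{-1}$ in the notation of Lemma~\ref{lemma:reduce-to-central}) of all the $A_w[D']$. Your description of the type-A obstruction is otherwise accurate.
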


Note that the statement of Theorem~\ref{theorem:main-general} is general enough that it applies to $R = \ov{\bZ}_{\ell}$ and $R = \ov{\bF}_{\ell}$ for any prime $\ell \neq p$, so it recovers \cite[4.8]{DHKM} as a special case. The first step which allows us to analyze $r_D^{-1}(\phi_D)_{\alpha_D}$ is a ``reduction to tame parameters" as in \cite{DHKM}, the main input for which is provided by the following theorem.

\begin{theorem}\label{theorem:tame-reduction}
    If $\phi_D \in \Phi_D$ and $\alpha_D \in \Sigma(\phi_D)$ are admissible, then there is a $1$-cocycle $\vp\colon W_F^0 \to \hat{G}(\ov{\bZ}[D^{-1}])$ with $r_D(\vp) = \phi_D$ and $\alpha_{\vp} = \alpha_D$ such that $\Lvp(W_F^0)$ is finite and $\Ad_{\Lvp}$ preserves a Borel pair of $Z_{\hat{G}}(\phi_D)^0$.
\end{theorem}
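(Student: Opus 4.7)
The plan is to start with a cocycle provided by the admissibility hypothesis and successively modify it to achieve the required properties. Let $M = Z_{\hat{G}}(\phi_D)^0$ and $N = Z_{\LG}(\phi_D)$; by Theorem~\ref{theorem:DHKM-3.1}, $M$ is split reductive over $\ov{\bZ}[D^{-1}]$ with finite constant component group $\widetilde{\pi}_0(\phi_D)$. Any cocycle realizing $(\phi_D, \alpha_D)$ has its $L$-homomorphism factoring through $N$, with $\Lvp|_{P_F}$ prescribed by $\phi_D$ and $\Lvp(\rm{Fr}), \Lvp(s)$ in prescribed components, subject to a cocycle relation of the form $\Lvp(\rm{Fr}) \Lvp(s) \Lvp(\rm{Fr})^{-1} = \Lvp(s)^q \cdot \Lvp(p_0)$ for an appropriate $p_0 \in P_F$.

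First, I would use admissibility together with the syntomic structure of $\underline{\rm{Z}}^1(W_F^0, \hat{G})_{\phi_D, \alpha_D}$ to produce a cocycle $\vp_0$ over an algebraically closed field $k$ over $\ov{\bZ}[D^{-1}]$. Since $\Lvp_0(s)^{n_P}$ is unipotent by Lemma~\ref{lemma:bounded-order-of-inertia}, the Jordan semisimple part of $\Lvp_0(s)$ has finite order dividing $n_P$; similarly for $\Lvp_0(\rm{Fr})$. Replacing each by its semisimple part and verifying compatibility with the cocycle relation (using that Jordan decomposition commutes with conjugation and with $q$-th powers, and that the unipotent contribution of $\Lvp_0(p_0)$ is controlled by the finiteness of $\Lvp_0|_{P_F}$) produces a cocycle $\vp_1$ with finite image. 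This preserves the invariants $(\phi_D, \alpha_D)$ because $r_D$ discards unipotent parts via Lemma~\ref{lemma:power-iso-unipotent-scheme}.

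Next, to arrange joint Borel pair preservation, apply Lemma~\ref{lemma:bz-semisimple}: semisimplicity of $\Lvp_1(s)$ makes $\Ad(\Lvp_1(s))$ quasi-semisimple on $M$, hence it preserves some Borel pair. The identity component of the fixed subgroup $M^{\Ad \Lvp_1(s)}$ is reductive by Steinberg's theorem, and is normalized by $\Lvp_1(\rm{Fr})$ via the cocycle relation. After suitable $M$-conjugation, apply the same semisimplification analysis inductively to $\Lvp_1(\rm{Fr})$ restricted to this reductive subgroup to obtain a Borel pair of $M$ jointly preserved by $\Ad(\Lvp_1(\rm{Fr}))$ and $\Ad(\Lvp_1(s))$; this is then preserved by all of $\Ad(\Lvp_1(W_F^0))$ since the $P_F$-action factors through the finite constant group $\widetilde{\pi}_0(\phi_D)$ generated by the component classes of $\Lvp_1(\rm{Fr})$ and $\Lvp_1(s)$.

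Finally, the integral descent: fix a Borel pair $(B_M, T_M)$ of $M$ over $\ov{\bZ}[D^{-1}]$ (available by splitness). After $M(k)$-conjugation, $\vp_1$ takes values in the normalizer $N_N(B_M, T_M)(k)$, whose identity component is $T_M$ and whose component group is a subgroup of the constant finite $\widetilde{\pi}_0(\phi_D)$. Since $T_M$ is a split torus with abundant $\ov{\bZ}[D^{-1}]$-valued torsion and the component group is constant, every required finite-order lift has an $\ov{\bZ}[D^{-1}]$-valued representative; adjusting within $T_M(\ov{\bZ}[D^{-1}])$ to match $\phi_D(s^b)$ and the Frobenius relation precisely produces the desired cocycle. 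The main obstacle is verifying in the first step that passing to semisimple parts preserves the cocycle relation exactly, despite the noncommuting $\Lvp_0(p_0)$ contribution; this requires a delicate commutator analysis exploiting the finiteness of $\Lvp_0(P_F)$. The integral descent, while nontrivial, is tractable thanks to the splitness of $M$ and the constancy of $\widetilde{\pi}_0(\phi_D)$ granted by Theorem~\ref{theorem:DHKM-3.1}.
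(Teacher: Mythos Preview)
Your first modification step contains a genuine gap. The claim that ``the Jordan semisimple part of $\Lvp_0(\rm{Fr})$ has finite order'' is unjustified: Lemma~\ref{lemma:bounded-order-of-inertia} constrains only the inertia generator $s$, and there is no analogue for Frobenius. Indeed, over an algebraically closed field a cocycle in $r_D^{-1}(\phi_D)_{\alpha_D}$ can send $\rm{Fr}$ to an element whose semisimple part has infinite order: multiplying $\vp_0(\rm{Fr})$ by any element of the maximal torus of $M$ fixed by $\Ad\Lvp_0(s)$ produces another cocycle with the same $(\phi_D,\alpha_D)$. So replacing by semisimple parts cannot by itself yield finite image; some genuinely new idea is needed to control $\Lvp(\rm{Fr})$. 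Your own caveat at the end flags the $p_0$-commutator issue, but the Frobenius problem is more serious and is not acknowledged.

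The paper's proof avoids this by a different mechanism. Fixing a Borel pair $(B,T)$ of $M$ over $\ov{\bZ}[D^{-1}]$ with normalizer $\cT$ in $Z_{\LG}(\phi_D)$, the subscheme $\underline{\Sigma}(W_F^0,\cT)_{\phi_D,\alpha_D}$ of cocycles landing in $\cT$ is shown to be a \emph{pseudo-torsor} for the diagonalizable $\ov{\bZ}[D^{-1}]$-group scheme $\underline{\rm{Z}}^1_{\alpha_D}(W_F^0/I_{F,b}^0,T)$. This reduces everything to exhibiting a finite-image point over some faithfully flat $\ov{\bZ}[D^{-1}]$-algebra. The paper then (i) cites the argument of \cite[3.6]{DHKM} to obtain a finite-image $\ov{\bQ}$-point---this is where the Frobenius-finiteness is actually established, and it is not a Jordan-decomposition argument; (ii) spreads out to $\ov{\bZ}[D^{-1},1/N]$; and (iii) at each remaining prime $\lambda$ invokes \cite[1.3]{Martin-Vinberg} (bounded subgroups over local fields can be conjugated into a maximal compact) to get a point over $\cO_{K_\lambda}$. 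Your integral descent sketch lacks both the torsor framework and this local input; the assertion that ``every required finite-order lift has an $\ov{\bZ}[D^{-1}]$-valued representative'' hides exactly the content supplied by the diagonalizable-group and Bruhat--Tits arguments.
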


\begin{proof}
    Let $(B, T)$ be a Borel pair of $Z_{\hat{G}}(\phi_D)^0$ which is defined over $\ov{\bZ}[D^{-1}]$. Let $\cT$ be the normalizer of $(B, T)$ in $Z_{\hat{G}}(\vp_D)$, and note that $\cT^0 = T$ and $\cT$ has component group $\widetilde{\pi}_0(\phi_D)$. Thus $\alpha_D$ gives rise to a natural action of $W_F$ on $T$. Let $P \subset P_F$ be open and normal in $W_F^0$ such that $\phi_D|_P$ is trivial, and let $n = ab$ be an integer as in Lemma~\ref{lemma:bounded-order-of-inertia}, where $a \in D$ and $b \in D'$. Thus $\phi_D$ is realized as a $1$-cocycle $I_{F, b}^0/I_{F, n, P}^0 \to \hat{G}(\ov{\bZ}[D^{-1}])$. We may form the scheme $\underline{\rm{Z}}^1_{\alpha_D}(W_F^0/I_{F, b}^0, T)$ of $1$-cocycles, which is a diagonalizable group scheme by the same argument as \cite[3.6(1)]{DHKM}. Similarly, let $\underline{\Sigma}(W_F^0, \cT)_{\phi_D, \alpha_D}$ be the scheme of those $\vp \in \underline{\rm{Z}}^1(W_F^0, \hat{G})_{\phi_D, \alpha_D}$ such that $\Lvp(W_F^0) \subset \cT$. It is straightforward to check that $\underline{\Sigma}(W_F^0, \cT)_{\phi_D, \alpha_D}$ is a pseudo-torsor for $\underline{\rm{Z}}^1_{\alpha_D}(W_F^0/I_{F, b}^0, T)$ in the sense of \cite[\href{https://stacks.math.columbia.edu/tag/0497}{Tag 0497}]{stacks-project}, so by the discussion following \cite[3.6]{DHKM} it suffices to show that there is a faithfully flat $\ov{\bZ}[D^{-1}]$-algebra $R$ such that $\underline{\Sigma}(W_F^0, \cT)_{\phi_D, \alpha_D}$ admits an $R$-point with finite image.\smallskip

    If $\underline{\rm{Z}}^1(W_F^0, \hat{G})_{\phi_D, \alpha_D}$ is nonempty, then by flatness it contains a $\ov{\bQ}$-point $\vp$. The last paragraph of the section \textit{Lifting this point to characteristic $0$} in the proof of \cite[3.6]{DHKM} shows that there exists such a $\vp$ with finite image, and by passing to a conjugate we may and do assume $\Ad_{\Lvp}$ preserves $(B, T)$. Using the fact that $\ov{\bQ} = \varinjlim_N \ov{\bZ}[1/N]$, we may and do assume that $\vp$ factors through $\hat{G}(\ov{\bZ}[D^{-1}, 1/N])$ for some $N$. Let $K$ be a number field such that $\Lvp$ factors through $\hat{G}(\cO_K[D^{-1}, 1/N])$, and let $\lambda$ be a prime of $\cO_K$ not dividing any element of $D$; it is enough to show now that, after possibly extending $K$, the L-homomorphism $\Lvp$ can be conjugated to factor through $\hat{G}(\cO_{K_{\lambda}})$. Since $\Lvp$ has finite (hence bounded) image, this follows from \cite[1.3]{Martin-Vinberg}.
\end{proof}

In general, if $R$ is a $\ov{\bZ}[D^{-1}]$-algebra, let $\underline{\rm{Z}}^1(W_F^0/P_F, \hat{G})_R^{r_D = 1}$ denote the clopen subscheme of $\underline{\rm{Z}}^1(W_F^0/P_F, \hat{G})_R$ consisting of those $\vp$ satisfying $r_D(\vp) = 1$. For given admissible $\phi_D$ and $\alpha_D$, choose $\vp\colon W_F^0 \to \hat{G}(\ov{\bZ}[D^{-1}])$ as in Theorem~\ref{theorem:tame-reduction}. The map $\rho \mapsto \rho \cdot \vp$ gives an isomorphism
\[
\underline{\rm{Z}}^1_{\vp}(W_F^0/P_F, Z_{\hat{G}}(\phi_D)^0)_R^{r_D = 1} \xrightarrow[]{\sim} r_D^{-1}(\phi_D)_{\alpha_D} \otimes_{\ov{\bZ}[D^{-1}]} R.
\]
Note that the action of $W_F$ on $Z_{\hat{G}}(\phi_D)^0$ induced by $\vp$ is \textit{$D$-tame} in the following sense.

\begin{definition}\label{def:d-tame}
    The action of $W_F$ on $\hat{G}$ is \textit{$D$-tame} if it is trivial on $I_{F, b}^0$ for some $b \in D'$.
\end{definition}

Thus Theorem~\ref{theorem:intro-MAIN-main} reduces to the following a priori weaker statement.

\begin{theorem}\label{theorem:main-general-reduced}
    Suppose that the action of $W_F$ on $\hat{G}$ is $D$-tame and preserves a Borel pair. If $R$ is an integral domain over $\ov{\bZ}[1/p]$ and $D_R = D$, then $\underline{\rm{Z}}^1(W_F^0/P_F, \hat{G})^{r_D = 1}_R$ is connected.
\end{theorem}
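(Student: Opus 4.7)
The plan is to execute the five-step outline given at the end of the introduction in this $D$-tame setting. Write $\sigma$ for the automorphism by which $s$ acts on $\hat{G}$; since $W_F$ preserves a Borel pair $(\hat{B},\hat{T})$, so does $\sigma$. First I reduce to $\hat{G}$ semisimple and simply connected via a surjection $\hat{G}^{\mathrm{sc}}\times Z(\hat{G})^\circ\twoheadrightarrow \hat{G}$ compatible with the $W_F$-action: the torus contribution produces a diagonalizable cocycle scheme whose connectedness over $R$ reduces to the elementary multiplicative-type analysis displayed in Claim~\ref{claim:conn-image}, and Lemma~\ref{lemma:conn-passes-to-image} (applicable since $\underline{\rm{Z}}^1$ is $R$-pure by Lemma~\ref{lemma:cocycles-pure}) transports connectedness through the surjection. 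Once $\hat{G}$ is simply connected, define
\[
\Sigma\colon \underline{\rm{Z}}^1(W_F^0/P_F,\hat{G})^{r_D=1}\to \hat{G}\rtimes\sigma,\qquad \vp\mapsto \Lvp(s),
\]
and let $\ov{\Sigma}\coloneqq \chi_\sigma\circ\Sigma$, where $\chi_\sigma\colon \hat{G}\rtimes\sigma\to (\hat{G}\rtimes\sigma)/\!/\hat{G}\cong A/W_0$ is the twisted Chevalley--Steinberg quotient of Theorem~\ref{theorem:twisted-chevalley}. The Frobenius relation $\rm{Fr}\,s\,\rm{Fr}^{-1}=s^q$ forces $\chi_\sigma(\Lvp(s))=\chi_\sigma(\Lvp(s)^q)$, so $\ov{\Sigma}$ factors through a finite closed subscheme $Y\subset A/W_0$. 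Purity together with Lemma~\ref{lemma:pure-to-finite-connected} applied to $\ov{\Sigma}_R\colon \underline{\rm{Z}}^1(W_F^0/P_F,\hat{G})^{r_D=1}_R\to Y_R$ reduces the theorem to: (a) each geometric fiber of $\ov{\Sigma}$ is connected, and (b) $Y_R$ is connected.

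For (a), fix a geometric point $x\in Y(k)$ with a representative $\vp$. Using Lemma~\ref{lemma:power-iso-unipotent-scheme} to extract roots consistently, decompose $\Lvp(s)=t'u$ with $t'$ quasi-semisimple and $u$ unipotent commuting with $t'$, and construct an $\bA^1$-family $\vp_c$ interpolating between $\vp$ and a quasi-semisimple cocycle $\vp_0$ exactly as in Claim~\ref{claim:conn-fibers}; the cocycle identities hold by the $\hat{G}$-equivariance in Lemma~\ref{lemma:power-iso-unipotent-scheme}, and the family stays in the fiber since $t'u^c$ and $t'$ share the same twisted conjugacy class in $A/W_0$. So I may assume $\vp$ is quasi-semisimple; then Lemma~\ref{lemma:bz-semisimple} gives complete reducibility of $\vp|_{\langle s\rangle}$, Theorem~\ref{theorem:bmr-normal} transfers this to the full cocycle, and via Theorem~\ref{theorem:closed-orbit} the fiber $\Sigma^{-1}(\Lvp(s))$ is identified with a transporter of a quasi-semisimple element of $\hat{G}\rtimes\sigma$, hence with a torsor over a $\sigma$-twisted centralizer. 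The latter is connected by Steinberg's connectedness theorem since $\hat{G}$ is simply connected.

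For (b), stratify $Y_{\ov{\bZ}[D^{-1}]}$ by closed subschemes $C_w$ indexed by $w\in W_0$, where $C_w$ is the schematic image in $A/W_0$ of the $w$-twisted $q$-power fixed-point locus in $\hat{T}$; each $C_w$ is a coset of a finite diagonalizable group scheme over $\ov{\bZ}[D^{-1}]$ and hence connected. The $C_w$ cover $Y$, so $\pi_0(Y_R)=W_0/\!\sim$ with $w\sim w'$ iff $C_w\cap C_{w'}\neq\emptyset$. When the action $W_F\to\hat{G}^{\mathrm{ad}}$ lifts to $\hat{G}$ over some field of characteristic $\ell\neq p$---which holds whenever $Z(\hat{G})$ has at most one prime divisor other than $p$, and therefore in all simple types outside of type A---the lift itself provides a cocycle whose image in $Y$ lies in every $C_w$ modulo $\ell$, collapsing $\sim$ to the trivial relation. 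I expect the main obstacle to be the residual case in type A, where such a uniform lift need not exist and $|Z(\hat{G})|$ may involve two distinct primes simultaneously; a finer combinatorial analysis of $q$-power orbits on the diagonalizable scheme $A$ (the content of Appendix~\ref{appendix:a}, in the spirit of Example~\ref{example:type-A}) will be required to verify directly that enough of the $C_w$'s meet modulo the relevant primes to connect $Y_R$.
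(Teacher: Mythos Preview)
Your overall architecture matches the paper's, but step (a) contains a genuine gap. You propose to deform $\vp$ to a quasi-semisimple cocycle by writing $\Lvp(s)=t'u$ and forming the family $c\mapsto t'u^c$ ``exactly as in Claim~\ref{claim:conn-fibers}''. For $\SL_2$ this works because $u=1+N$ with $N^2=0$, so $u^c\coloneqq 1+cN$ is defined in any characteristic. For general $\hat{G}$ over a field of characteristic $\ell>0$ there is no natural map $\bA^1\to \sU_{\hat{G}}$ interpolating the powers of $u$: Lemma~\ref{lemma:power-iso-unipotent-scheme} only lets you extract $N$th roots for $N\in D$, it does not furnish an additive one-parameter family, and a regular unipotent element in $\GL_n$ with $n>\ell$ already shows that no such family exists compatibly with the relation $\Lvp(\rm{Fr})u^c\Lvp(\rm{Fr})^{-1}=u^{qc}$. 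The paper circumvents this entirely: in Lemma~\ref{lemma:reduce-to-ss} it deforms the \emph{whole cocycle} via the Hilbert--Mumford criterion, taking a cocharacter $\lambda$ such that $\lim_{t\to 0}\Ad(\lambda(t))\circ\Lvp$ exists and is completely reducible, then applies Theorem~\ref{theorem:bmr-normal} in the correct direction (from the full cocycle to its restriction to $\langle s\rangle$) to conclude that the limit has quasi-semisimple $s$-value. Your invocation of Theorem~\ref{theorem:bmr-normal} is backwards: it passes complete reducibility from a homomorphism to its restriction to a normal subgroup, not the other way.

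A smaller point: in your reduction to the simply connected case you appeal to Lemma~\ref{lemma:conn-passes-to-image}, which requires the pushforward on cocycle schemes to be \emph{surjective} on some geometric fiber. This is not automatic from surjectivity of $\hat{G}^{\rm{sc}}\times Z(\hat{G})^\circ\to\hat{G}$; there is an $\rm{H}^2$ obstruction. The paper's Lemma~\ref{lemma:reduction-sssc} handles this by factoring the isogeny into prime-power pieces and treating the primes in $D$ and $D'$ separately (cohomological vanishing for the former, the Frobenius trick of Lemma~\ref{lemma:pushforward-surj} for the latter), and it only obtains surjectivity on $\pi_0$, not on points. Your sketch of step (b) and the type-A caveat are accurate.
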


\section{Proof of Theorem~\ref{theorem:intro-MAIN-main}}\label{section:proof}

For the rest of this paper, we keep the notation of Section~\ref{ss:intro-notation} and assume that the action of $W_F$ on $\hat{G}$ is $D$-pure in the sense of Definition~\ref{def:d-tame}. Moreover, let $R$ be an integral domain over $\ov{\bZ}[1/p]$ such that $D_R = D$, and let $D'$ be as in Section~\ref{section:tame-reduction}. As we have shown in the previous section, to prove Theorem~\ref{theorem:intro-MAIN-main} (or the slightly stronger Theorem~\ref{theorem:main-general}), it is enough to prove Theorem~\ref{theorem:main-general-reduced}.

\subsection{Reduction to the simply connected case}\label{ss:sc}

The goal of this section is to reduce Theorem~\ref{theorem:main-general-reduced} to the case that $\hat{G}$ is semisimple and simply connected. In fact, for completeness we prove slightly more than is necessary. We begin by establishing Theorem~\ref{theorem:main-general-reduced} in the case that $\hat{G}$ is a torus; the case $R = \ov{\bZ}[1/p]$ is established in \cite[3.14]{Dat-notes} by a completely similar argument.

\begin{lemma}\label{lemma:torus-case}
    Let $\hat{T}$ be a torus over $R$ equipped with a finite $D$-tame action of $W_F^0/P_F$. The scheme $\underline{\rm{Z}}^1(W_F^0/P_F, \hat{T})^{r_D = 1}_R$ is connected.
\end{lemma}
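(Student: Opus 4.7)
My plan has three steps. The first step is to realize $Z := \underline{\rm{Z}}^1(W_F^0/P_F, \hat{T})^{r_D=1}_R$ as a multiplicative type $R$-group scheme. Since $\hat{T}$ is abelian, $Z$ is already commutative; writing a cocycle $\vp$ as $(u, t) := (\vp(s), \vp(Fr)) \in \hat{T}^2$, letting $\sigma, \tau$ denote the actions of $s, Fr$ on $\hat{T}$, and choosing $b \in D'$ with $\sigma^b = 1$ (guaranteed by $D$-tameness), the cocycle relation $\tau(u) = t^{-1} \operatorname{Nm}_{\sigma, q}(u) \sigma^q(t)$ together with the condition $\operatorname{Nm}_{\sigma, b}(u) = 1$ (to which $r_D = 1$ simplifies, using $\sU_{\hat{T}} = \{1\}$) realize $Z$ as the kernel of an explicit group scheme homomorphism $\phi\colon \hat{T}^2 \to \hat{T}^2$. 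Dually, $X^*(Z) = \coker(\phi^*)$ with $\phi^*\colon L^2 \to L^2$, $L := X^*(\hat{T})$.

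Next I would reduce connectedness of $Z_R$ to a torsion condition on $\coker(\phi^*)$. The key input is that for $R$ integral with $D_R = D$, the scheme $\mu_n$ over $R$ is connected for every $D'$-integer $n$: each prime $\ell \mid n$ is not invertible in $R$, so at the Henselization $R_\fp^h$ at a prime $\fp \ni \ell$, the ring $R_\fp^h[x]/(x^n - 1)$ has only trivial idempotents (its mod-$\ell$ reduction is infinitesimal), and this transfers to $R$ via the injection $R \hookrightarrow R_\fp^h$. Thus a multiplicative type $R$-group scheme is connected iff its character group has no torsion of order divisible by a prime in $D$. It therefore suffices to show $\coker(\phi^*)$ has no $\ell$-torsion for any $\ell \in D$, equivalently that the rank of $\phi^*$ over $\bZ$ does not drop modulo $\ell$.

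Finally, for each $\ell \in D$, I would extend scalars to $\overline{\bF_\ell}$ and decompose $L_{\overline{\bF_\ell}} = \bigoplus_\zeta V_\zeta$ into $\sigma^*$-eigenspaces (valid since $|\sigma| \in D'$ is coprime to $\ell$). The crucial arithmetic input would be $\gcd(q, |\sigma|) = 1$ (as $q = p^f$ with $p \in D$ and $|\sigma| \in D'$), forcing $\zeta^q = 1 \iff \zeta = 1$. An explicit eigenspace computation of $\ker(\phi^* \otimes \overline{\bF_\ell})$ would then show: vanishing of the second coordinate of $\phi^*(\chi, \mu)$ forces $\chi \in V_1$; vanishing of the first determines the $V_1$-component of $\mu$ in terms of $\chi$ (using $b$ invertible modulo $\ell$); and the $V_\zeta$-components of $\mu$ for $\zeta \neq 1$ are unconstrained. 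The kernel would thus have dimension $\rk(L)$, matching the identical calculation over $\overline{\bQ}$, so the ranks agree. The main technical point is this eigenspace analysis, but it is routine once the setup is in place.
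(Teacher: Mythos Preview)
Your setup matches the paper's: both realize $Z$ as the kernel of an explicit homomorphism $\hat T^2\to\hat T^2$, observe it is a diagonalizable $R$-group scheme, and reduce connectedness to showing that the torsion of its character group has order in $D'$. Where you diverge is in verifying this torsion condition. The paper uses the $W_F$-equivariant isogeny $\hat T\to \hat T_s\times \hat T/\hat T^s$ (of degree lying in $D'$ by a result of Digne--Michel) to reduce to the two extreme cases where $s$ acts trivially and where $1-s$ is an isogeny, each of which is handled in a line. Your eigenspace computation over $\ov{\bF}_\ell$ is a valid alternative that avoids the external citation; the calculation you sketch is correct, though you should record explicitly that $\tau^*$ preserves $V_1$ (this follows from $\tau\sigma\tau^{-1}=\sigma^q$), so that no hidden constraint on $\chi$ arises from the $V_\zeta$-components ($\zeta\neq 1$) of the first coordinate.

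There is, however, a genuine gap in your second step. Your assertion that the mod-$\ell$ reduction of $R_\fp^h[x]/(x^n-1)$ is infinitesimal is false unless $n$ is a power of $\ell$: if $n=\ell^a m$ with $m>1$ coprime to $\ell$, then modulo $\ell$ one has $x^n-1=(x^m-1)^{\ell^a}$, whose spectrum has $\mu_m$ as its reduced subscheme and hence carries nontrivial idempotents. So your Henselization argument does not establish that $\mu_n$ is connected over $R$ for composite $n\in D'$. The statement is nonetheless true, but a different argument is needed: for instance, use the group structure to reduce to showing that each generic point $\zeta\in\mu_n(\Frac R)$ lies in the component of $1$, write $\zeta=\prod_i\zeta_i$ with $\zeta_i$ of $\ell_i$-power order, and specialize successively at primes of $R$ above each $\ell_i\in D'$, where $\zeta_i\to 1$. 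With this correction your proof goes through.
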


\begin{proof}
    Let $n$ be an integer as in Lemma~\ref{lemma:bounded-order-of-inertia}, where $P = P_F$, and let $b$ be the largest factor of $n$ lying in $D'$. Thus $\underline{\rm{Z}}^1(W_F^0/P_F, \hat{T})^{r_D = 1}_R$ is the kernel of the map $\hat{T}_R \times \hat{T}_R \to \hat{T}_R \times \hat{T}_R$ given by
    \[
    (\Phi_0, \Sigma_0) \mapsto (\Phi_0 \cdot \prescript{\rm{Fr}}{}{\Sigma_0} \cdot \prescript{s^q}{}{\Phi_0}^{-1} \cdot \left(\prod_{i=0}^{q-1} \prescript{s^i}{}{\Sigma_0}\right)^{-1}, \prod_{i=0}^{b-1} \prescript{s^i}{}{\Sigma_0}).
    \]
    By \cite[IX, 6.8]{SGA3II}, this is a diagonalizable $R$-group scheme, so it is enough to show that its character group has torsion order in $D'$. If the action of $s$ is trivial, then this is clear because $\underline{\rm{Z}}^1(W_F^0/P_F, \hat{T})^{r_D = 1}_R$ is a closed $R$-subgroup scheme of $\hat{T} \times \hat{T}[b]$ which surjects onto the first factor of $\hat{T}$. If instead the map $L_s\colon t \mapsto t (\prescript{s}{}{t})^{-1}$ is an isogeny, then the map $m_b\colon t \mapsto \prod_{i=0}^{b-1} \prescript{s^i}{}{t}$ is trivial. Indeed, since the action is $D$-tame, $s^b$ acts trivially on $\hat{T}$, so the homomorphism $m_b\colon \hat{T} \to \hat{T}$ factors through the finite $\ker(L_s)$, hence is trivial. Thus the map $\underline{\rm{Z}}^1(W_F^0/P_F, \hat{T})^{r_D = 1}_R \to \hat{T}_R$ given by $\vp \mapsto \vp(s)$ is an isogeny with kernel $\ker(L_{s^q})$, which is of order in $D'$ by \cite[1.2(1)]{Digne-Michel-quass}.\smallskip

    In general, the isogeny $\hat{T} \to \hat{T}_s \times \hat{T}/\hat{T}^s$ is of degree in $D'$ by \cite[1.2(3)]{Digne-Michel-quass}. Thus the map
    \[
    \underline{\rm{Z}}^1(W_F^0/P_F, \hat{T})^{r_D = 1}_R \to \underline{\rm{Z}}^1(W_F^0/P_F, \hat{T}_s)^{r_D = 1}_R \times \underline{\rm{Z}}^1(W_F^0/P_F, \hat{T}/\hat{T}^s)^{r_D = 1}_R
    \]
    has finite kernel and cokernel of order in $D'$ and the lemma follows.
\end{proof}

\begin{lemma}\label{lemma:pushforward-surj}
    Let $\ell$ be prime, and let $f\colon H \to H'$ be an isogeny of connected reductive $\ov{\bF}_\ell$-groups which is equivariant with respect to finite actions of a finitely generated group $\Gamma$. If $f$ is of $\ell$-power degree, then the pushforward $f_*\colon \underline{\rm{Z}}^1(\Gamma, H) \to \underline{\rm{Z}}^1(\Gamma, H')$ is a homeomorphism.
\end{lemma}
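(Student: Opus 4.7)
The plan is to show that $f$ itself is a universal homeomorphism, from which the conclusion for $f_*$ will follow essentially formally. Since $f$ is an isogeny between connected reductive $\ov{\bF}_\ell$-groups, its kernel $K$ is central in $H$ and hence a closed subgroup scheme of $Z(H)$. Because $Z(H)$ is the intersection of the kernels of the roots inside any maximal torus, it is of multiplicative type, so $K$ is a finite multiplicative-type group scheme of $\ell$-power order over a base of characteristic $\ell$. Cartier duality then forces $K$ to be infinitesimal (its Cartier dual being a finite constant $\ell$-power order group over $\ov{\bF}_\ell$), so in particular $K(\ov{\bF}_\ell) = \{1\}$. Thus $f$ is finite, faithfully flat, and radicial, i.e.\ a universal homeomorphism.

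Next, choose finitely many generators $\gamma_1, \ldots, \gamma_n$ of $\Gamma$. Evaluation on the generators realizes $\underline{\rm{Z}}^1(\Gamma, H)$ and $\underline{\rm{Z}}^1(\Gamma, H')$ as closed subschemes of $H^n$ and $(H')^n$, respectively, cut out by the equations coming from the defining relations of $\Gamma$ together with the cocycle identity. Under these embeddings $f_*$ is the restriction of $f^n\colon H^n \to (H')^n$, which is still a universal homeomorphism. I factor $f_*$ as
\[
\underline{\rm{Z}}^1(\Gamma, H) \hookrightarrow (f^n)^{-1}\bigl(\underline{\rm{Z}}^1(\Gamma, H')\bigr) \to \underline{\rm{Z}}^1(\Gamma, H'),
\]
where the second arrow is a universal homeomorphism as the restriction of $f^n$ to a closed subscheme. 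For the first arrow, both sides are closed subschemes of $H^n$, so it suffices to verify that they have the same $\ov{\bF}_\ell$-points: a tuple $(h_i) \in H(\ov{\bF}_\ell)^n$ lies in the right-hand side exactly when each cocycle relation among the $f(h_i)$ holds, i.e.\ when each corresponding expression in the $h_i$ lies in $K(\ov{\bF}_\ell) = \{1\}$, and this is equivalent to $(h_i)$ being a cocycle in $H(\ov{\bF}_\ell)$. So the first arrow is a closed embedding which is bijective on geometric points, hence a universal homeomorphism, and composing with the second arrow shows that $f_*$ is a universal homeomorphism, in particular a homeomorphism.

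I do not foresee a substantive obstacle: the argument is essentially formal once one observes that the hypothesis (degree an $\ell$-power in characteristic $\ell$) forces $\ker(f)$ to be infinitesimal, which is the characteristic-$\ell$ manifestation of the kernel having ``no étale part''. The only bookkeeping is to identify the cocycle schemes with closed subschemes of $H^n$ and $(H')^n$ and to check that cocycle relations pulled back through $f$ impose no additional constraint at $\ov{\bF}_\ell$-points, and this is immediate from $K(\ov{\bF}_\ell) = \{1\}$.
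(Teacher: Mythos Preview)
Your argument is correct and in fact proves slightly more than stated (you obtain that $f_*$ is a universal homeomorphism, not merely a homeomorphism). The key observation that $\ker f$, being a finite multiplicative subgroup scheme of $\ell$-power order in characteristic $\ell$, is infinitesimal is exactly right, and your reduction to $H^n$ together with the check on $\ov{\bF}_\ell$-points is clean.

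The paper takes a different route: rather than analyzing $\ker f$ directly, it descends the situation to a finite field $\bF_{\ell^n}$ and observes that any isogeny with connected kernel in characteristic $\ell$ can be completed to a relative Frobenius, i.e.\ there is an isogeny $g\colon H' \to H$ with $g \circ f = F$. Since $F_*$ is the relative Frobenius of $\underline{\rm{Z}}^1(\Gamma, H)$ (hence a homeomorphism), and a symmetric argument gives $f \circ g = F_{H'}$, one concludes that $f_*$ and $g_*$ are mutually inverse on topological spaces. Your approach is more self-contained and avoids the descent to $\bF_{\ell^n}$; the paper's approach has the pleasant feature that it produces an explicit ``homotopy inverse'' $g_*$ and makes the $\Gamma$-equivariance of the inverse isogeny automatic. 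Both arguments are short, and yours has the advantage of making the geometric reason (infinitesimal kernel) more transparent.
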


\begin{proof}
    Since $\Gamma$ is finitely generated, its actions on $H$ and $H'$ are both defined over some finite field $\bF_{\ell^n}$. Note that $\ker f$ is finite and connected, so, after possibly increasing $n$, there is an isogeny $h\colon H' \to H$ such that $g \circ f$ is equal to the relative Frobenius $F\colon H \to H^{(\ell^n)} \cong H$. Note that $g$ is automatically $\Gamma$-equivariant because $f$ and $F$ both are. Moreover, the pushforward $F_*$ is equal to the relative Frobenius of $\underline{\rm{Z}}^1(\Gamma, H)$: one can see this by embedding $\underline{\rm{Z}}^1(\Gamma, H)$ into $H^n$ via a surjection from a free group to $\Gamma$. Thus $F_*$ is a homeomorphism, and a symmetric argument with $g$ in place of $f$ shows that $f_*$ is a homeomorphism.
\end{proof}


\begin{lemma}\label{lemma:reduction-sssc}
    Let $f\colon \hat{G}' \to \hat{G}$ be a homomorphism of reductive $\cO_K[D^{-1}]$-group schemes which is equivariant with respect to finite $D$-tame actions of $W_F/P_F$. If $f$ induces an isogeny of derived groups, then $f_*\colon \underline{\rm{Z}}^1(W_F^0/P_F, \hat{G}')^{r_D = 1}_R \to \underline{\rm{Z}}^1(W_F^0/P_F, \hat{G})^{r_D = 1}_R$ is surjective on $\pi_0$.
\end{lemma}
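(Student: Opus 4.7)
The plan is to factor $f$ equivariantly into three simpler morphisms and treat each separately. Because $f$ induces an isogeny on derived subgroups, $\ker(f)\cap (\hat{G}')^{\mathrm{der}}$ is finite, and a standard connectedness argument then forces $\ker(f)$ to be central in $\hat{G}'$; hence its identity component $Z \coloneqq \ker(f)^0$ is a central subtorus of $\hat{G}'$. The image $f(\hat{G}')$ is a $W_F$-stable closed normal subgroup of $\hat{G}$ containing $\hat{G}^{\mathrm{der}}$, so the cokernel $T_0 \coloneqq \hat{G}/f(\hat{G}')$ is a torus carrying a $D$-tame $W_F$-action. Thus $f$ factors $W_F$-equivariantly as
\[
\hat{G}' \xrightarrow{\,q\,} \hat{G}'/Z \xrightarrow{\,h\,} f(\hat{G}') \xrightarrow{\,i\,} \hat{G},
\]
where $q$ is a quotient by the central subtorus $Z$, $h$ is a finite central isogeny with kernel $\ker(f)/Z$, and $i$ is a closed immersion with torus cokernel $T_0$. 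Since $\pi_0$-surjectivity is preserved under composition, it suffices to verify the conclusion for each of $q_*$, $h_*$, and $i_*$.

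For $i$, the image of $i_*$ is the fiber over the trivial cocycle of the pushforward $\pi_*\colon \underline{\rm{Z}}^1(W_F^0/P_F, \hat{G})^{r_D = 1}_R \to \underline{\rm{Z}}^1(W_F^0/P_F, T_0)^{r_D = 1}_R$. The target is connected by Lemma~\ref{lemma:torus-case}. Since $\hat{G}\to T_0$ is a smooth surjection of smooth group schemes, I expect $\pi_*$ to be flat (even smooth) and surjective; combined with connectedness of the target and the purity of the source (Lemma~\ref{lemma:cocycles-pure}), this should force every connected component of the source to contain a point of $\pi_*^{-1}(0) = \im(i_*)$.

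For $q$, the fibers of $q_*$ are pseudo-torsors for $\underline{\rm{Z}}^1(W_F^0/P_F, Z)^{r_D=1}_R$, which is connected by Lemma~\ref{lemma:torus-case}; although $q_*$ may fail to be surjective on geometric points owing to $H^2$-obstructions with torus coefficients, the pseudo-torsor structure combined with Lemma~\ref{lemma:conn-passes-to-image} and purity should yield $\pi_0$-surjectivity. For $h$, purity (Lemma~\ref{lemma:cocycles-pure}) lets us check $\pi_0$-surjectivity fiberwise over $\Spec R$; over $\ov{\bF}_\ell$ with $\ell\in D$, we decompose $\ker(h) = A_\ell \times A_{\ell'}$ into $\ell$-primary and prime-to-$\ell$ parts. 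Lemma~\ref{lemma:pushforward-surj} identifies the pushforward modulo $A_{\ell'}$ as a homeomorphism, reducing to the étale isogeny by $A_{\ell'}$, which is handled by a descent argument along the resulting étale cover of cocycle spaces.

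The most delicate case is $i$: one must show that every connected component of $\underline{\rm{Z}}^1(W_F^0/P_F, \hat{G})^{r_D = 1}_R$ contains a cocycle whose composition with $\hat{G}\twoheadrightarrow T_0$ is trivial. Establishing the necessary flatness (or even openness) of $\pi_*$ from the abstract definition of $\underline{\rm{Z}}^1$ is the main technical difficulty; once in place, the connectedness of $\underline{\rm{Z}}^1(W_F^0/P_F, T_0)^{r_D = 1}_R$ given by Lemma~\ref{lemma:torus-case}, combined with the purity of the ambient moduli space, should force the required $\pi_0$-surjectivity.
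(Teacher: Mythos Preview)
Your factorization $f = i \circ h \circ q$ is a reasonable strategy, but the argument for the closed immersion $i$ has a genuine gap that is not merely a matter of filling in details. Even granting that $\pi_*\colon \underline{\rm{Z}}^1(W_F^0/P_F, \hat{G})^{r_D = 1}_R \to \underline{\rm{Z}}^1(W_F^0/P_F, T_0)^{r_D = 1}_R$ is smooth and surjective with connected target, it does \emph{not} follow that $\pi_*^{-1}(1) = \im(i_*)$ meets every connected component of the source: a smooth surjection onto a connected base can have disconnected total space with only some components passing through a given fiber. Purity of the source is of no help here, since it governs how components over $\Spec R$ specialize, not how components behave relative to $\pi_*$. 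What you would actually need is a transitive action on $\pi_0$ of some connected group scheme through which $\pi_*$ is equivariant, and producing such an action is not obviously easier than the original statement. Your treatment of $h$ has a related issue: the residue characteristics of points of $\Spec R$ lie in $D'$, not $D$, and once you correct this the unexplained ``descent argument'' for the \'etale prime-to-$\ell$ part of $\ker(h)$ is exactly where a nontrivial $\rm{H}^2$ computation is required.

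The paper avoids the closed-immersion step entirely by a different d\'evissage. Instead of factoring $f$, it uses the multiplication isogenies $m\colon Z \times \sD(\hat{G}) \to \hat{G}$ and $m'\colon Z' \times \sD(\hat{G}') \to \hat{G}'$, where $Z, Z'$ are the maximal central tori; since $m_* \circ \widetilde{f}_* = f_* \circ m'_*$, it suffices to show $m_*$ and $\widetilde{f}_*$ are $\pi_0$-surjective. For $\widetilde{f}_*$ this follows from Lemma~\ref{lemma:torus-case} on the torus factor and the isogeny case on the derived factor. For the isogeny case, the paper factors by prime-power degree: primes $\ell \in D'$ are handled by Lemma~\ref{lemma:pushforward-surj} over $\ov{\bF}_\ell$ together with purity, while primes $\ell \in D$ are handled by showing that the lifting obstruction in $\rm{H}^2(W_F/N_b, \ker(f_i)(k))$ vanishes, via Hochschild--Serre and the fact that $I_F/N_b$ has order coprime to $|\ker(f_i)|$. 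This last computation is the substance that your ``descent'' placeholder is missing.
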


\begin{proof}
    First suppose that $f$ is an isogeny. Let $n$ be the degree of $f$, and factor $n = \prod_{i=1}^m \ell_i^{k_i}$, where $\ell_i \neq \ell_j$ for $i \neq j$. There is a unique factorization
    \[
    f = \left( \hat{G}' = \hat{G}_m \xrightarrow[]{f_m} \hat{G}_{m-1} \xrightarrow[]{f_{m-1}} \cdots \xrightarrow[]{f_1} \hat{G}_0 = \hat{G} \right),
    \]
    where $f_i$ is of degree $\ell_i^{k_i}$. Note that the action of $W_F$ on $\hat{G}$ extends uniquely to actions on $\hat{G}_i$ for all $i$ by canonicity, and each $f_i$ is $W_F$-equivariant. \smallskip
    
    For each $1 \leq i \leq m$, there is a pushforward $(f_i)_*\colon\underline{\rm{Z}}^1(W_F^0/P_F, \hat{G}_i)^{r_D = 1} \to \underline{\rm{Z}}^1(W_F^0/P_F, \hat{G}_{i - 1})^{r_D = 1}$. If $\ell_i \in D'$, the map $((f_i)_*)_{\cO_K/\ell_i}$ is a homeomorphism by Lemma~\ref{lemma:pushforward-surj}, and hence $(f_i)_*$ is surjective on $\pi_0$ by Lemma~\ref{lemma:cocycles-pure}.\smallskip
    
    Next suppose $\ell_i \in D$. We claim that if $k$ is an algebraically closed field over $R$, then $((f_i)_*)_k$ is surjective. We first reduce to the case that $k$ is of positive characteristic. Note that by standard limit arguments, there is a saturated multiplicatively closed subset $D_1 \subset D$ such that $D_1$ contains only finitely many primes, the action of $W_F$ on $\hat{G}$ is defined over $\cO_K[D_1^{-1}]$, and $\underline{\rm{Z}}^1(W_F^0/P_F, \hat{G})^{r_{D_1} = 1}_R = \underline{\rm{Z}}^1(W_F^0/P_F, \hat{G})^{r_D = 1}_R$. If $\chara k = 0$, then to show that $((f_i)_*)_k$ is surjective it follows from a standard constructibility argument that it is enough to show that $((f_i)_*)_{\cO_K/\ell'}$ is surjective for all $\ell' \not\in D_1$. So suppose that $k$ is of characteristic $\ell'$.\smallskip
    
    By Lemma~\ref{lemma:bounded-order-of-inertia} and the definitions, any cocycle $\vp_{i-1} \in \underline{\rm{Z}}^1(W_F^0/P_F, \hat{G}_{i - 1})^{r_D = 1}(k)$ satisfies $\Lvp(s)^b = 1$ for some $b \in D'$. If $N_b$ is the smallest normal subgroup of $W_F/P_F$ containing $s^b$, then $\vp$ induces a cocycle $W_F/N_b \to \hat{G}_{i-1}(k)$ and the obstruction to lifting $\vp_{i-1}$ to $\underline{\rm{Z}}^1(W_F^0/P_F, \hat{G}_i)^{r_D = 1}(k)$ lies in $\rm{H}^2(W_F/N_b, \ker(f_i)(k))$. By the Hochschild--Serre spectral sequence, there is an exact sequence
    \[
    \rm{H}^1(W_F/I_F, \rm{H}^1(I_F/N_b, \ker(f_i)(k))) \to \rm{H}^2(W_F/N_b, \ker(f_i)(k)) \to \rm{H}^2(I_F/N_b, \ker(f_i)(k))^{W_F/I_F}
    \]
    Since $I_F/N_b$ is finite of order coprime to the order of $\ker(f_i)(k)$, we see that the outer terms of this exact sequence vanish and hence the middle term vanishes as well. So we see that $((f_i)_*)_k$ is surjective, and by the above it follows that $(f_i)_*$ is surjective. We have shown each $(f_i)_*$ is surjective on $\pi_0$ and thus $f_*$ is surjective on $\pi_0$.\smallskip

    Now let $Z$ (resp.\ $Z'$) be the maximal central torus of $\hat{G}$ (resp.\ $\hat{G}'$), and let $\sD(\hat{G})$ (resp.\ $\sD(\hat{G}')$) be its derived group. The multiplication morphism $m\colon Z \times \sD(\hat{G}) \to \hat{G}$ (resp.\ $m'\colon Z' \times \sD(\hat{G}') \to \hat{G}'$) is a central isogeny by \cite[XXII, 6.2.4]{SGA3III}. There is a commutative diagram
    \[
    \begin{tikzcd}
        Z' \times \sD(\hat{G}') \arrow[r, "\widetilde{f}"] \arrow[d, "m'"]
            &Z \times \sD(\hat{G}) \arrow[d, "m"] \\
        \hat{G}' \arrow[r, "f"]
            &\hat{G}
    \end{tikzcd}
    \]
    and we have shown above that $m'_*$ and $m_*$ are both surjective on $\pi_0$. Moreover, $\underline{\rm{Z}}^1(W_F^0/P_F, Z)$ and $\underline{\rm{Z}}^1(W_F^0/P_F, Z')$ are both connected by Lemma~\ref{lemma:torus-case}, and $\widetilde{f}\colon \sD(\hat{G}') \to \sD(\hat{G})$ is an isogeny, so $\widetilde{f}_*$ is also surjective on $\pi_0$. Thus $f_*$ is surjective on $\pi_0$, as desired.
\end{proof}

\subsection{A map to the adjoint quotient}\label{ss:fibral}

As in Section~\ref{ss:chevalley-steinberg}, denote the GIT quotient $(\hat{G} \rtimes s)/\!/\hat{G}$ by $\hat{G}/\!/_s \hat{G}$, and let $\chi_s\colon \hat{G} \rtimes s \to \hat{G}/\!/_s \hat{G}$ be the natural map. Let $W_0$ be the subgroup of $s$-fixed points of the Weyl group of $\hat{T}$ in $\hat{G}$, and let $A = \hat{T}/(1 - s)\hat{T}$. There is a natural map $\hat{T} \to \hat{G}/\!/_s \hat{G}$ given by $t \mapsto \chi_s(t \rtimes s)$, and Theorem~\ref{theorem:twisted-chevalley} shows that the induced map $A/W_0 \to \hat{G}/\!/_s \hat{G}$ is an isomorphism.\smallskip

Define an $\cO_K[D^{-1}]$-morphism $\Sigma\colon \underline{\rm{Z}}^1(W_F^0/P_F, \hat{G}) \to \hat{G} \rtimes s$ by $\vp \mapsto \prescript{L}{}{\vp}(s)$. Define $\ov{\Sigma} = \chi_s \circ \Sigma$.\smallskip

There are maps $\hat{G} \rtimes s \to \hat{G} \rtimes s^q$ given by the $q$th power map and $\rm{Fr}$, and these induce $\rm{Fr}^{-1}[q]\colon \hat{G}/\!/_s \hat{G} \to \hat{G}/\!/_s \hat{G}$. Let $B_{\hat{G}} = (\hat{G}/\!/_s \hat{G})^{\rm{Fr}^{-1}[q]}$. Moreover, let $B_{\hat{G}}[D']$ denote the closed subscheme of $(B_{\hat{G}})_{\cO_K[D^{-1}]}$ consisting of those sections $x$ satisfying $[b]x = \chi_s(1)$ for some $b \in D'$. Define $\Sigma_{D'}$ and $\ov{\Sigma}_{D'}$ to be the restrictions of $\Sigma$ and $\ov{\Sigma}$ to $\underline{\rm{Z}}^1(W_F^0/P_F, \hat{G})^{r_D = 1}$, and note that $\ov{\Sigma}_{D'}$ factors through $B_{\hat{G}}[D']$. In fact, $\underline{\rm{Z}}^1(W_F^0/P_F, \hat{G})^{r_D = 1} = \ov{\Sigma}^{-1}(B_{\hat{G}}[D'])$.\smallskip

For each $w \in W_0$, let $A_w$ be the subscheme of $A$ consisting of those $a$ such that $\left(\prescript{\rm{Fr}^{-1}}{}{a}\right)^q = na\left(\prescript{s}{}{n}\right)^{-1}$, where $n$ is a lift of $w$ in $N_{\hat{G}}(\hat{T})$. Let $A_w[D']$ denote the closed subscheme of $A_w$ consisting of those $a$ such that $\chi_s(a^b) = \chi_s(1)$ for some $b \in D'$.

\begin{lemma}\label{lemma:Aw-B-surjective}
    Each $(A_w[D'])_R$ is finite and connected, and the natural map $\bigcup_{w \in W_0} A_w[D']_R \to B_{\hat{G}}[D']_R$ is surjective.
\end{lemma}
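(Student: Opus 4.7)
The plan is to handle the two assertions of the lemma separately: first the finiteness and connectedness of $(A_w[D'])_R$, and then the surjectivity of the natural map onto $B_{\hat{G}}[D']_R$.

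First I would observe that $A = \hat{T}/(1-s)\hat{T}$ is itself a torus, since its character lattice $X^*(A) = X^*(\hat{T})^{s}$ is torsion-free. The subscheme $A_w \subset A$ is a fiber of the endomorphism $\phi_w(a) := (\prescript{\rm{Fr}^{-1}}{}{a})^q \cdot \Ad(n)(a)^{-1}$ of $A$, over the element $n(\prescript{s}{}{n})^{-1}$. Since $\phi_w$ acts on the character lattice with leading term $q \cdot \rm{id}$, it is an isogeny, and so $A_w$ is (when nonempty) a torsor under the finite multiplicative-type subgroup $N_w := \ker \phi_w$. Consequently $A_w$, and hence the closed subscheme $A_w[D'] \subset A_w$, is finite. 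After trivializing the torsor over a suitable étale cover, $A_w[D']$ becomes identified with a translate of a finite closed subgroup scheme $M_w \subset N_w$ whose order is divisible only by primes in $D'$.

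The main step is then the following sub-lemma, which I would prove separately: \emph{any finite multiplicative-type $R$-group scheme $M$ whose order is divisible only by primes non-invertible in $R$ is connected.} I would argue by induction on the number of distinct primes dividing $|M|$. For the base case $|M| = \ell^k$, pick a prime ideal $\fp \subset R$ containing $\ell$ — which exists because $\ell$ is a non-unit in the domain $R$ — so that $k(\fp)$ has characteristic $\ell$; then $M \otimes_R k(\fp)$ is an $\ell$-primary multiplicative-type group in characteristic $\ell$, hence infinitesimal and connected. Since $M$ is finite flat over the integral domain $R$, every irreducible component of $M$ dominates $\Spec R$, so one connected fiber forces $M$ connected. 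For the inductive step, fix any prime $\ell \mid |M|$; in the short exact sequence $1 \to M[\ell^\infty] \to M \to M/M[\ell^\infty] \to 1$ both extremes are connected by induction, and the finite flat surjection $M \to M/M[\ell^\infty]$ has connected fibers over a connected base, forcing $M$ connected. Applying this to $M_w$ (and hence to its translate $A_w[D']$) over $R$, where by $D_R = D$ every prime in $D'$ is a non-unit, yields the desired connectedness.

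Finally, for surjectivity of $\bigcup_{w \in W_0} A_w[D']_R \to B_{\hat{G}}[D']_R$, I would unwind definitions: a point of $B_{\hat{G}}[D']_R$ is a $W_0$-orbit $[a] \subset A$ fixed by $\rm{Fr}^{-1}[q]$, which forces $a \in A_w$ for some $w \in W_0$, and it satisfies $\chi_s(a^b) = \chi_s(1)$ for some $b \in D'$, which places $a$ in $A_w[D']$; the quotient map $A \twoheadrightarrow A/W_0$ then restricts to the desired surjection. The main obstacle is the connectedness sub-lemma: individual fibers of $(A_w[D'])_R$ over $\Spec R$ are typically disconnected (the generic fiber becomes étale and splits into its order-many points), so the argument cannot proceed fiberwise and must instead glue components across residue fields via the extension argument.
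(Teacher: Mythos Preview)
Your approach is essentially the same as the paper's: identify $A_w[D']$ as a torsor for the finite multiplicative-type group $A_w'[D'] = N_w[D']$ of order lying in $D'$, argue that this forces connectedness over $R$, and deduce surjectivity from the twisted Chevalley--Steinberg isomorphism $A/W_0 \cong \hat{G}/\!/_s \hat{G}$. Your inductive sub-lemma is a welcome expansion of what the paper leaves as a one-line assertion.

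One step needs tightening. The passage from ``$M_w$ is connected over $R$'' to ``$A_w[D']$ is connected over $R$'' via trivializing over an \'etale cover does not work as written. Trivializing a torsor for a group containing $\mu_n$ requires extracting $n$th roots, which is \emph{ramified} at primes dividing $n$ --- exactly the primes you care about --- so the cover will not be \'etale. Even over a faithfully flat cover $R'$, connectedness of $M_w$ over $R$ does not automatically give connectedness of $M_w \otimes_R R'$: base change can disconnect (e.g.\ $\mu_2$ over $\bZ$ versus over $\bQ$). You can repair this either by choosing $R'$ to be an integral extension of $R$ (so $D_{R'} = D$ and your sub-lemma applies directly over $R'$), or, more cleanly, by running your rank argument directly on the torsor: $A_w[D']$ is finite flat over $R$, and for each prime $\ell$ dividing $|M_w|$, reducing at a prime of $R$ containing $\ell$ shows that the rank of any direct summand is divisible by $\ell^{v_\ell(|M_w|)}$, forcing all idempotents to be trivial. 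The paper is equally terse at this transition, so this is a shared gap rather than a divergence in method.
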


\begin{proof}
    Note that $A_w[D']$ is a torsor for the multiplicative type $\cO_K[D^{-1}]$-group scheme $A_w'[D']$ consisting of those $a \in A$ such that $a^b = 1$ for some $b \in D'$ and $\left(\prescript{\rm{Fr}^{-1}}{}{a}\right)^q = \prescript{w}{}{a}$. Since each $A_w'$ is a multiplicative type group scheme, to show that $A_w$ is finite and connected it suffices to show that $A_w'(\ov{\bQ})$ contains no elements of order in $D$. But this is clear.\smallskip
    
    By Theorem~\ref{theorem:twisted-chevalley}, the natural map $(A/W_0)^{\rm{Fr}^{-1}[q]} \to B_{\hat{G}}$ is an isomorphism. Moreover, the map $\bigcup_{w \in W_0} A_w[D'] \to (A/W_0)^{\rm{Fr}^{-1}[q]}[D']$ is clearly surjective.
\end{proof}

As we will see in Section~\ref{ss:eigenvalues}, the map of Lemma~\ref{lemma:Aw-B-surjective} is the key to showing that $(\im \ov{\Sigma}_{D'})_R$ is connected and thereby (using the results of this section) to proving Theorem~\ref{theorem:main-general-reduced}.

\begin{lemma}\label{lemma:lifting-Aw}
    For every algebraically closed field $k$ of characteristic $\neq p$, every $w \in W_0$, and every $a \in A_w[D'](k)$, there is some $\vp \in \underline{\rm{Z}}^1(W_F^0/P_F, \hat{G}(k))$ such that $\ov{\Sigma}(\vp) = \chi_s(a)$.
\end{lemma}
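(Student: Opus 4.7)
The plan is to construct an explicit cocycle $\vp \in \underline{\rm{Z}}^1(W_F^0/P_F, \hat{G}(k))$ valued in $N_{\hat{G}}(\hat{T})(k)$ with $\vp(s) = \tilde{a}$ for a chosen lift $\tilde{a} \in \hat{T}(k)$ of $a$. Since $\vp(s) \in \hat{T}$, Theorem~\ref{theorem:twisted-chevalley} immediately yields $\ov{\Sigma}(\vp) = \chi_s(\tilde{a} \rtimes s) = \chi_s(a)$, so the problem reduces to producing a valid $\vp(\rm{Fr})$.

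I would use the ansatz $\vp(\rm{Fr}) = t \cdot n''$, where $n'' \in N_{\hat{G}}(\hat{T})(k)$ is a specific lift of a Weyl element and $t \in \hat{T}(k)$ is to be determined. A crucial first observation is that, although $a \in A_w$, the Weyl element appearing naturally on the left-hand side of the cocycle identity
\[
\vp(\rm{Fr}) \cdot \prescript{\rm{Fr}}{}{\vp(s)} \cdot \prescript{s^q}{}{\vp(\rm{Fr})^{-1}} = \prod_{i=0}^{q-1}\prescript{s^i}{}{\vp(s)}
\]
is $\prescript{\rm{Fr}}{}{w}$ rather than $w$, because the Weil-group relation $\rm{Fr} \cdot s = s^q \cdot \rm{Fr}$ is used to move $\rm{Fr}$ past $s$. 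By $D$-tameness, the order of $s$ on $\hat{G}$ lies in $D'$, hence is coprime to $q$, so $\langle s \rangle = \langle s^q \rangle$ in $\Aut(\hat{G})$. This gives both $\prescript{\rm{Fr}}{}{w} \in W_0$ (the $s$- and $s^q$-fixed parts of the Weyl group coincide, legitimizing the ansatz) and $(1-s^q)\hat{T}(k) = (1-s)\hat{T}(k)$ (needed for solvability at the end).

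I would then fix a lift $n$ of $w$ and set $n'' = \prescript{\rm{Fr}}{}{n}$, which lifts $\prescript{\rm{Fr}}{}{w}$. Substituting the ansatz into the cocycle identity and using that $\hat{T}$ is abelian reduces it to a single equation $(1-s^q)(t) = \eta$ in $\hat{T}(k)$ for an explicit $\eta \in \hat{T}(k)$ depending on $\tilde{a}$ and $n''$. The goal is then to check $\eta \in (1-s)\hat{T}(k)$ by projecting to $A$ and invoking the defining relation $(\prescript{\rm{Fr}^{-1}}{}{a})^q = w \cdot a$ of $A_w$; the key compatibility underlying this verification is the identity $\prescript{\rm{Fr}}{}{\mu} = n''\prescript{s^q}{}{n''^{-1}}$ in $\hat{T}$, where $\mu = n\prescript{s}{}{n^{-1}}$, which follows directly from $n'' = \prescript{\rm{Fr}}{}{n}$ together with $\rm{Fr} \cdot s = s^q \cdot \rm{Fr}$. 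Algebraic closedness of $k$ then furnishes the required $t$.

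The main obstacle is the choice of $n''$: the defining relation of $A_w$ is expressed in terms of $w$, while the cocycle identity in raw form produces $\prescript{\rm{Fr}}{}{w}$, and the two are reconciled precisely by taking $n'' = \prescript{\rm{Fr}}{}{n}$. Both this reconciliation and the final solvability of $(1-s^q)(t) = \eta$ rest on the $D$-tameness hypothesis, which is thus playing its decisive role here.
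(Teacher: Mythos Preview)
Your approach is correct and essentially the same as the paper's: both construct $\vp$ with $\vp(s)$ a lift of $a$ in $\hat{T}(k)$ and $\vp(\rm{Fr})$ a lift of $\prescript{\rm{Fr}}{}{w}$ in $N_{\hat{G}}(\hat{T})(k)$ corrected by a torus element, reducing to solving a $(1-s^q)$-equation in $\hat{T}(k)$ via the identity $(1-s)\hat{T} = (1-s^q)\hat{T}$. The only cosmetic differences are that the paper first reindexes by replacing $w$ with $\prescript{\rm{Fr}^{-1}}{}{w}$ (so that its lift $n$ plays the role of your $n'' = \prescript{\rm{Fr}}{}{n}$), places the torus correction on the right rather than the left, and deduces $(1-s)\hat{T} = (1-s^q)\hat{T}$ from a dimension argument using $\rm{Fr}$-conjugacy rather than your $\langle s \rangle = \langle s^q \rangle$ observation from $D$-tameness.
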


\begin{proof}
    Note that $A_{\prescript{\rm{Fr}^{-1}}{}{w}}(k)$ is, equivalently, the set of $a \in A(k)$ such that $a^q = \prescript{w\rm{Fr}}{}{a} \cdot n \left(\prescript{s^q}{}{n}\right)^{-1}$. Lifting $a$ arbitrarily to $t_0 \in \hat{T}(k)$, we find that there is some $t' \in \hat{T}(k)$ such that
    \[
    \prod_{i=0}^{q-1} \prescript{s^i}{}{t_0} = \prescript{w\rm{Fr}}{}{t_0} \cdot n\left(\prescript{s^q}{}{n}\right)^{-1} \cdot t' \left(\prescript{s}{}{t'}\right)^{-1}.
    \]
    Now $(1 - s)\hat{T} = (1 - s^q)\hat{T}$: indeed, clearly $(1 - s^q)\hat{T} \subset (1 - s)\hat{T}$, so these are equal for dimension reasons because $(1 - s^q)\hat{T} = (\rm{Fr} - \rm{Fr}s)\rm{Fr}^{-1}\hat{T}$. Moreover, $(1 - s)\hat{T}$ is clearly $W_0$-stable. In particular, there is some $t'' \in \hat{T}(k)$ such that $\prescript{w}{}{\left(t''\left(\prescript{s^q}{}{t''}\right)^{-1}\right)} = t'\left(\prescript{s}{}{t'}\right)^{-1}$. We thus define $\vp \in \rm{Z}^1(W_F^0/P_F, \hat{G}(k))$ by $\vp(\rm{Fr}) = nt''$ and $\vp(s) = t_0$.
\end{proof}

\begin{lemma}\label{lemma:finite-image-git}
    The map $\ov{\Sigma}$ factors through a surjective map to $B_{\hat{G}}$. Moreover, the schematic image $\im \ov{\Sigma}$ is finite flat. Completely similar claims hold for $\ov{\Sigma}_{D'}$ and $\im \ov{\Sigma}_{D'}$ with $B_{\hat{G}}[D']$ in place of $B_{\hat{G}}$.
\end{lemma}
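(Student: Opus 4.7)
The plan is to verify in order that $\ov\Sigma$ lands in $B_{\hat G}$, that the induced map is surjective, and that $\im\ov\Sigma$ is finite flat; the $\ov\Sigma_{D'}$ statement then follows by the same arguments with $A_w[D']$ and $B_{\hat G}[D']$ in place of $A_w$ and $B_{\hat G}$.

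The factorization is a direct unpacking of the cocycle relation: from $\rm{Fr}\,s\,\rm{Fr}^{-1} = s^q$ in $W_F^0/P_F$, applying $\Lvp$ gives $\Lvp(\rm{Fr})\Lvp(s)\Lvp(\rm{Fr})^{-1} = \Lvp(s)^q$ in $\LG(R)$, which under $\chi_s$ and the definition of $\rm{Fr}^{-1}[q]$ as the composition of $[q]\colon \hat G \rtimes s \to \hat G \rtimes s^q$ with conjugation by $\rm{Fr}^{-1}$ reads exactly as $\rm{Fr}^{-1}[q](\chi_s(\Lvp(s))) = \chi_s(\Lvp(s))$. For surjectivity, Theorem~\ref{theorem:twisted-chevalley} identifies $\hat G /\!/_s \hat G \cong A/W_0$ and hence $B_{\hat G} \cong (A/W_0)^{\rm{Fr}^{-1}[q]}$. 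The surjectivity argument of Lemma~\ref{lemma:Aw-B-surjective}, with the $[D']$-restriction omitted, shows $\bigsqcup_{w \in W_0} A_w \to B_{\hat G}$ is surjective on geometric points, and Lemma~\ref{lemma:lifting-Aw}---whose construction does not rely on the $[D']$ hypothesis---lifts each geometric point of $A_w$ to a cocycle. Thus $\ov\Sigma$ surjects onto $B_{\hat G}$ on the level of underlying topological spaces.

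For finiteness and flatness of $\im\ov\Sigma$, I would first note that each $A_w$ is finite over $\cO_K[D^{-1}]$: it is a coset of the kernel of the group endomorphism $\psi_w(a) = (\prescript{\rm{Fr}^{-1}}{}{a})^q(\prescript{w}{}{a})^{-1}$ of $A$, and the induced map $q\rm{Fr}-w$ on $X^\ast(A)$ is an isogeny because $q\rm{Fr}\cdot w^{-1} - \id$ has eigenvalues $q\zeta - 1$ for roots of unity $\zeta$ (none zero since $q \geq p > 1$). By the definition of schematic image, $\cO(\im\ov\Sigma)$ injects into $\cO(\underline{\rm Z}^1(W_F^0/P_F, \hat G))$, which Lemma~\ref{lemma:cocycles-pure} shows is flat---equivalently torsion-free---over the Dedekind ring $\cO_K[D^{-1}]$; as submodules of torsion-free modules are torsion-free, $\im\ov\Sigma$ is flat. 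For finiteness, I would use the set-theoretic containment $\im\ov\Sigma \subseteq \bigcup_w \chi_s(A_w)$---which holds because every cocycle's semisimple part conjugates into $\hat T$ and satisfies an $A_w$-relation---and upgrade it to a scheme-theoretic inclusion, expressing $\cO(\im\ov\Sigma)$ as a quotient of the finite module $\cO(\bigsqcup_w A_w)$. The main obstacle is precisely this last upgrade; the cleanest route is to observe that in characteristic zero the differential $q\rm{Fr}^{-1} - \id$ of $\rm{Fr}^{-1}[q]$ is invertible, making $B_{\hat G, \bQ}$ étale and $\im\ov\Sigma_\bQ$ finite reduced, after which purity of $\underline{\rm Z}^1$ and the scheme-theoretic closure transport the finiteness across $\cO_K[D^{-1}]$.
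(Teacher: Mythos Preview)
Your argument for the factorization through $B_{\hat G}$, the surjectivity onto $B_{\hat G}$, and the flatness of $\im\ov\Sigma$ matches the paper's proof exactly: the paper invokes the cocycle relation and Lemmas~\ref{lemma:Aw-B-surjective} and~\ref{lemma:lifting-Aw} (with the $[D']$ restriction dropped) just as you do, and deduces flatness from flatness of $\underline{\rm Z}^1(W_F^0/P_F,\hat G)$ over the Dedekind base.

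The only divergence is the finiteness step, where you take an unnecessary detour. You have already shown in your first paragraph that $\ov\Sigma$ factors \emph{scheme-theoretically} through the closed subscheme $B_{\hat G}\subset \hat G/\!/_s\hat G$, since the cocycle relation holds functorially for all $R$-points. Hence $\im\ov\Sigma$ is a closed subscheme of $B_{\hat G}$. The paper then simply argues that $B_{\hat G}$ itself is finite, again citing (the non-$[D']$ version of) Lemma~\ref{lemma:Aw-B-surjective}: each $A_w$ is finite, as you also verify, and $\bigsqcup_{w}A_w\to B_{\hat G}$ is surjective. A closed subscheme of a finite scheme is finite, and you are done.

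Your attempt to instead establish a scheme-theoretic containment $\im\ov\Sigma\subset\bigcup_w\chi_s(A_w)$ is both unnecessary and mis-stated: such a containment would make $\cO(\im\ov\Sigma)$ a quotient of $\cO\bigl(\bigcup_w\chi_s(A_w)\bigr)$, and the latter is a \emph{subring} of $\cO(\bigsqcup_w A_w)$, not a quotient of it. Your fallback---arguing \'etaleness of $B_{\hat G,\bQ}$ and then invoking ``purity of $\underline{\rm Z}^1$ and the scheme-theoretic closure'' to transport finiteness---is left vague and does not obviously work as written: flatness plus finiteness of the generic fiber does not by itself force finiteness over a Dedekind base (e.g.\ $\Spec\bZ[1/2]\subset\bA^1_{\bZ}$). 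The scheme-theoretic inclusion $\im\ov\Sigma\subset B_{\hat G}$ that you already have is exactly the missing ingredient.
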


\begin{proof}
    Since $\underline{\rm{Z}}^1(W_F^0/P_F, \hat{G})$ is flat over the Dedekind domain $\cO_K[1/p]$, it is clear that $\im \ov{\Sigma}$ is also flat. The fact that $\ov{\Sigma}$ factors through $B_{\hat{G}}$ follows directly from the fact that
    \[
    \Lvp(\rm{Fr})\Lvp(s)\Lvp(\rm{Fr})^{-1} = \Lvp(s)^q
    \]
    for $\vp \in \underline{\rm{Z}}^1(W_F^0/P_F, \hat{G})$. Moreover, the set-theoretic image of $\ov{\Sigma}$ is equal to $B_{\hat{G}}$ by Lemma~\ref{lemma:Aw-B-surjective} and Lemma~\ref{lemma:lifting-Aw}. Finiteness of $B_{\hat{G}}$, and hence $\im \ov{\Sigma}$, thus follows from Lemma~\ref{lemma:Aw-B-surjective}. The surjectivity of $\ov{\Sigma}$ follows from Lemma~\ref{lemma:Aw-B-surjective} and Lemma~\ref{lemma:lifting-Aw}. The claims for $\ov{\Sigma}_{D'}$ are immediate consequences.
\end{proof}

\begin{lemma}\label{lemma:reduction-fixed-jordan}
    Suppose that for every field $k$ over $\cO_K[D^{-1}]$ and every $x \in (\im \ov{\Sigma}_{D'})(k)$, the preimage $\ov{\Sigma}^{-1}(x)$ is connected. Then the map $\pi_0(\underline{\rm{Z}}^1(W_F^0/P_F, \hat{G})^{r_D = 1}_R) \to \pi_0((\im \ov{\Sigma}_{D'})_R)$ is bijective.
\end{lemma}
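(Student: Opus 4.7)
The plan is to deduce the claim as a direct application of Lemma~\ref{lemma:pure-to-finite-connected} to the morphism
\[
f \coloneqq (\ov{\Sigma}_{D'})_R \colon \underline{\rm{Z}}^1(W_F^0/P_F, \hat{G})^{r_D = 1}_R \to (\im \ov{\Sigma}_{D'})_R
\]
over the integral domain $R$. I would verify the four hypotheses of that lemma in turn. Surjectivity of $f$ is immediate: by construction $\ov{\Sigma}_{D'}$ surjects onto its schematic image (set-theoretic surjectivity is part of Lemma~\ref{lemma:finite-image-git}), and surjectivity is preserved by base change. $R$-finiteness of the target also follows at once from Lemma~\ref{lemma:finite-image-git} together with base change.

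For purity of the source, Lemma~\ref{lemma:cocycles-pure} gives $\cO_K[1/p]$-purity of $\underline{\rm{Z}}^1(W_F^0, \hat{G})$. The scheme $\underline{\rm{Z}}^1(W_F^0/P_F, \hat{G})^{r_D = 1}$ is obtained from this by passing to a clopen subscheme (the first condition, vanishing on $P_F$, is a discrete invariant by the decomposition of Section~\ref{section:tame-reduction}; the second, $r_D = 1$, is clopen by construction of $r_D$). Purity is preserved under passage to clopen subschemes and, via Lemma~\ref{lemma:purity-results}(1), under base change to $R$.

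The key remaining point is that the fiberwise connectedness hypothesis, which is phrased in terms of $\ov{\Sigma}$, suffices to verify the fiber hypothesis of Lemma~\ref{lemma:pure-to-finite-connected} for $f$. Every point $y$ of $(\im \ov{\Sigma}_{D'})_R$ has some residue field $k$ over $\cO_K[D^{-1}]$, and its image $x \in (\im \ov{\Sigma}_{D'})(k) \subseteq B_{\hat{G}}[D'](k)$ satisfies
\[
\ov{\Sigma}_{D'}^{-1}(x) = \ov{\Sigma}^{-1}(x) \cap \ov{\Sigma}^{-1}(B_{\hat{G}}[D']) = \ov{\Sigma}^{-1}(x),
\]
where the last equality uses the identity $\underline{\rm{Z}}^1(W_F^0/P_F, \hat{G})^{r_D = 1} = \ov{\Sigma}^{-1}(B_{\hat{G}}[D'])$ recorded just before Lemma~\ref{lemma:Aw-B-surjective}. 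The fiber of $f$ at $y$ is canonically identified with $\ov{\Sigma}_{D'}^{-1}(x) = \ov{\Sigma}^{-1}(x)$, which is connected by hypothesis.

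With all hypotheses in place, Lemma~\ref{lemma:pure-to-finite-connected} produces the desired bijection on $\pi_0$. I do not anticipate a substantive obstacle: the real content has been pushed into the inputs (purity from Lemma~\ref{lemma:cocycles-pure}, finiteness of the image from Lemma~\ref{lemma:finite-image-git}, and the point-set topology packaged in Lemma~\ref{lemma:pure-to-finite-connected}), and the present lemma is essentially a bookkeeping assembly of these pieces.
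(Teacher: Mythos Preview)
Your proposal is correct and follows exactly the approach of the paper, which simply cites Lemma~\ref{lemma:finite-image-git}, Lemma~\ref{lemma:cocycles-pure}, and Lemma~\ref{lemma:pure-to-finite-connected} without further elaboration. Your additional unpacking of why the clopen subscheme $\underline{\rm{Z}}^1(W_F^0/P_F, \hat{G})^{r_D = 1}_R$ inherits purity and why the fibers of $(\ov{\Sigma}_{D'})_R$ coincide with those of $\ov{\Sigma}$ is accurate and helpful.
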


\begin{proof}
    This follows from Lemma~\ref{lemma:finite-image-git}, Lemma~\ref{lemma:cocycles-pure}, and Lemma~\ref{lemma:pure-to-finite-connected}.
\end{proof}


\begin{remark}
    If $\hat{G}$ is semisimple and simply connected and the action of $W_F$ on $\hat{G}$ is unramified, then \cite[3.6]{Shotton-irr} and \cite[4.1]{DHKM} combine with fppf descent to show that $B_{\hat{G}}$ is flat with \'etale generic fiber, so in this case we have $\im \ov{\Sigma} = B_{\hat{G}}$. In fact, \cite[3.8]{Shotton-irr} suggests that this equality holds even for ramified actions. Since we do not need this refined result, we have not attempted to prove it. For $\hat{G}$ which are not simply connected, it is not clear that $B_{\hat{G}}$ is flat.
\end{remark}

\subsection{Deformation to quasi-semisimple inertia}\label{ss:qs}

In this section, we use the results of Section~\ref{ss:cr} to show that every L-homomorphism $\Lvp\colon W_F^0/P_F \to \LG(k)$ can be deformed to a cocycle such that $\Lvp(s)$ has quasi-semisimple action on $\hat{G}$, and use this to deduce a connectedness claim.

\begin{lemma}\label{lemma:reduce-to-ss}
    Let $k$ be a field, let $x \in (\hat{G}/\!/_s \hat{G})(k)$ be a point, and let $C \subset \chi_s^{-1}(x)$ be the unique closed $\hat{G}$-orbit. The inclusion $\Sigma^{-1}(C) \to \ov{\Sigma}^{-1}(x)$ induces a surjection on $\pi_0$.
\end{lemma}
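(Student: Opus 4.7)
The plan is first to reduce to $k = \ov{k}$ (which is harmless: a surjection on $\pi_0$ over $\ov{k}$ descends to one over $k$, since every connected component of the target over $k$ pulls back to at least one component over $\ov{k}$), and then, for each $\vp \in \ov{\Sigma}^{-1}(x)(k)$, to produce a morphism $\bA^1_k \to \ov{\Sigma}^{-1}(x)$ connecting $\vp$ to a point of $\Sigma^{-1}(C)$. The curve will be supplied by the Hilbert--Mumford theorem applied to the conjugation action of the reductive group $\hat{G}$ on the affine $k$-scheme $\underline{\rm{Z}}^1(W_F^0/P_F, \hat{G})_k$: there is a cocharacter $\lambda \colon \bGm \to \hat{G}$ such that $\vp^* \coloneqq \lim_{c \to 0} \lambda(c) \cdot \vp$ exists and lies in the unique closed $\hat{G}$-orbit contained in $\ov{\hat{G} \cdot \vp}$. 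Since $\ov{\Sigma}$ is $\hat{G}$-invariant, the map $c \mapsto \lambda(c) \cdot \vp$ defines a morphism $\bA^1_k \to \ov{\Sigma}^{-1}(x)$ sending $1 \mapsto \vp$ and $0 \mapsto \vp^*$, placing $\vp$ and $\vp^*$ in the same connected component.

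The key remaining step is to verify $\vp^* \in \Sigma^{-1}(C)$. Via the closed $\hat{G}$-equivariant embedding $\underline{\rm{Z}}^1(W_F^0/P_F, \hat{G}) \hookrightarrow \uHom_{k\textrm{-}\rm{gp}}(W_F^0/P_F, \LG)$, closedness of $\hat{G} \cdot \vp^*$ is equivalent to closedness of the $\LG$-orbit of $\Lvp^*$ (the latter being a finite disjoint union of $\hat{G}$-orbits distinguished by the $W$-projection), which by Theorem~\ref{theorem:closed-orbit} is equivalent to complete reducibility of $\Lvp^*\colon W_F^0/P_F \to \LG(k)$. The relation $\rm{Fr} \cdot s \cdot \rm{Fr}^{-1} = s^q$ makes $\langle s \rangle$ normal in $W_F^0/P_F$, so Theorem~\ref{theorem:bmr-normal} forces $\Lvp^*|_{\langle s \rangle}$ to be completely reducible as well, and Lemma~\ref{lemma:bz-semisimple} then implies that conjugation by $\Lvp^*(s)$ on $\hat{G}$ is quasi-semisimple.

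Finally, Theorem~\ref{theorem:twisted-chevalley} identifies $\hat{G}/\!/_s \hat{G}$ with $A/W_0$ via the image of $\hat{T} \rtimes s$. Since every quasi-semisimple element of $\hat{G} \rtimes s$ is $\hat{G}$-twisted-conjugate into $\hat{T} \rtimes s$ (conjugate the preserved Borel pair to $(\hat{B}, \hat{T})$), each fiber $\chi_s^{-1}(x)$ contains a unique quasi-semisimple $\hat{G}$-conjugacy class, which must coincide with the unique closed orbit $C$; this yields $\Lvp^*(s) \in C$, i.e., $\vp^* \in \Sigma^{-1}(C)$. The main obstacle is precisely this translation from a closed $\hat{G}$-orbit in the cocycle space to quasi-semisimplicity of $\Lvp^*(s)$ inside $\hat{G} \rtimes s$; the BMR complete-reducibility formalism handles it cleanly via the normal-subgroup inheritance and the $\bZ$-case characterization.
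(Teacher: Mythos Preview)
Your proof is correct and follows essentially the same approach as the paper's: reduce to $k = \ov{k}$, apply Hilbert--Mumford to the $\hat{G}$-action on the cocycle space to obtain a limit $\vp^*$ with closed orbit, then use Theorems~\ref{theorem:closed-orbit} and~\ref{theorem:bmr-normal} together with Lemma~\ref{lemma:bz-semisimple} to conclude $\Lvp^*(s) \in C$. You supply more detail than the paper in two places---the passage between closedness of the $\hat{G}$-orbit and the $\LG$-orbit, and the identification of the quasi-semisimple class in $\chi_s^{-1}(x)$ with $C$---both of which the paper leaves implicit.
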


\begin{proof}
    We may and do assume $k = \ov{k}$. Let $\Lvp \in \ov{\Sigma}^{-1}(x)(k)$. By Theorem~\ref{theorem:closed-orbit} and the Hilbert-Mumford criterion, there is a cocharacter $\lambda\colon \bGm \to \hat{G}$ such that $\prescript{L}{}{\psi} \coloneqq \lim_{t \to 0} \Ad(\lambda(t)) \circ \Lvp$ exists and is completely reducible. By Theorem~\ref{theorem:bmr-normal}, the restriction $\prescript{L}{}{\psi}|_{I_F^0/P_F}$ is completely reducible, so Lemma~\ref{lemma:bz-semisimple} shows that $\prescript{L}{}{\psi}(s) \in C(k)$, so we have produced a morphism $f\colon \bA^1 \to \ov{\Sigma}^{-1}(x)$ with $f(1) = \Lvp$ and $f(0) \in \Sigma^{-1}(C)(k)$.
\end{proof}

\begin{lemma}\label{lemma:connected-fibers}
    Let $k$ be a field over $\cO_K[D^{-1}]$, and let $x \in (\im \ov{\Sigma}_{D'})(k)$. If $\hat{G}$ is semisimple and $|\pi_1(\hat{G})| \in D$, then $\ov{\Sigma}^{-1}(x)$ is connected.
\end{lemma}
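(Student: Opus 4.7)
The plan is to reduce connectedness of $\ov{\Sigma}^{-1}(x)$ via Lemma~\ref{lemma:reduce-to-ss} to that of $\Sigma^{-1}(C)$, where $C$ is the unique closed $\hat{G}$-orbit in $\chi_s^{-1}(x)$; then to express $\Sigma^{-1}(C)$ as a $\hat{G}$-equivariant bundle whose fiber is a torsor for the centralizer of a quasi-semisimple element; and finally to show that centralizer is connected by passing to the simply connected cover, invoking Steinberg's theorem, and descending via a coprimality-driven cohomology vanishing. I may assume $k$ is algebraically closed.

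By Lemma~\ref{lemma:reduce-to-ss}, it suffices to show $\Sigma^{-1}(C)$ is connected. Pick a completely reducible $\vp_0 \in \Sigma^{-1}(C)(k)$ (supplied by Theorem~\ref{theorem:closed-orbit} as in Lemma~\ref{lemma:reduce-to-ss}), and set $\gamma = g_0 s := \Sigma(\vp_0)$. By Theorem~\ref{theorem:bmr-normal} applied to the normal subgroup $\langle s \rangle \subset W_F^0/P_F$, together with Lemma~\ref{lemma:bz-semisimple}, the automorphism $\tau := \Ad(\gamma)$ of $\hat{G}$ is quasi-semisimple; after $\hat{G}$-conjugation I may assume $\tau$ preserves the pinned Borel pair $(\hat{B}, \hat{T})$, whence $g_0 \in \hat{T}$. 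Since $x \in B_{\hat{G}}[D']$ is equivalent to $r_D(\vp_0) = 1$ (Section~\ref{ss:fibral}), the definition of $r_D$ forces $N_s^b(g_0) = u$ for some $b \in D'$, where $u$ is the unique unipotent element of $\hat{G}$ whose $a$-th power equals $\Lvp_0(s)^{ab}$. Because $g_0 \in \hat{T}$ and $s^b$ acts trivially on $\hat{G}$ by $D$-tameness, $N_s^b(g_0) \in \hat{T}$ is semisimple; but the only semisimple unipotent element is trivial, so $N_s^b(g_0) = 1$ and $\gamma^b$ acts trivially on $\hat{G}$. Thus $\tau$ has order dividing $b$, so lies in $D'$.

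Next, as in Claim~\ref{claim:conn-fibers}, the cocycle condition $\Lvp(\rm{Fr}) \gamma \Lvp(\rm{Fr})^{-1} = \gamma^q$ identifies $\Sigma^{-1}(\gamma)$ as a non-empty torsor for $Z_{\hat{G}}(\gamma^q)$ under left multiplication, and for any $a_0 \in \Sigma^{-1}(\gamma)$ conjugation by $a_0$ carries $Z_{\hat{G}}(\gamma)$ isomorphically onto $Z_{\hat{G}}(\gamma^q)$. Writing $\Sigma^{-1}(C) \cong \hat{G} \times^{Z_{\hat{G}}(\gamma)} \Sigma^{-1}(\gamma)$ as an associated bundle over the connected homogeneous space $C \cong \hat{G}/Z_{\hat{G}}(\gamma)$, connectedness of $\Sigma^{-1}(C)$ reduces to that of the algebraic group $Z_{\hat{G}}(\gamma)$.

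Finally, let $f \colon \hat{G}^{sc} \to \hat{G}$ be the simply connected cover, with kernel $\mu = \pi_1(\hat{G})$ finite étale over $k$ since $|\mu| \in D$. Lift $g_0$ to $\tilde g_0 \in \hat{G}^{sc}(k)$; since Borel pairs lift uniquely under $f$, the automorphism $\tilde\tau = \Ad(\tilde g_0 s)$ of $\hat{G}^{sc}$ is quasi-semisimple, so Steinberg's theorem on centralizers of quasi-semisimple elements in simply connected groups gives $Z_{\hat{G}^{sc}}(\tilde g_0 s)$ connected. The $\langle\tau\rangle$-cohomology sequence of $1 \to \mu \to \hat{G}^{sc} \to \hat{G} \to 1$ reads
\[
1 \to \mu^\tau \to Z_{\hat{G}^{sc}}(\tilde g_0 s) \to Z_{\hat{G}}(g_0 s) \to H^1(\langle\tau\rangle, \mu(k)),
\]
and since the order of $\tau$ lies in $D'$ while $|\mu| \in D$, the two orders are coprime and the $H^1$-term vanishes. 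Thus $Z_{\hat{G}^{sc}}(\tilde g_0 s) \to Z_{\hat{G}}(g_0 s)$ is surjective on $k$-points, hence as a morphism of algebraic groups, realizing $Z_{\hat{G}}(g_0 s)$ as a quotient of a connected group by the finite étale subgroup $\mu^\tau$. Therefore $Z_{\hat{G}}(g_0 s)$ is connected, completing the proof. The main obstacle is the order estimate on $\tau$: once $N_s^b(g_0) = 1$ is extracted from $r_D(\vp_0) = 1$ together with the quasi-semisimplicity of $\gamma$, the Steinberg--cohomology descent is formal.
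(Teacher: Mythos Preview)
Your proof is correct and follows essentially the same strategy as the paper: reduce via Lemma~\ref{lemma:reduce-to-ss} to connectedness of $\Sigma^{-1}(C)$, then exploit that this space fibers over the connected orbit $C$ with fibers which are torsors for the (twisted) centralizer of a quasi-semisimple element of order in $D'$.

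The packaging differs in two minor ways. First, the paper constructs a universal twisted centralizer $H \to C$ via a fiber product diagram and argues that $\Sigma^{-1}(C) \to C$ is an $H$-torsor, whereas you fix a single $\gamma \in C$ and write $\Sigma^{-1}(C) \cong \hat{G} \times^{Z_{\hat{G}}(\gamma)} \Sigma^{-1}(\gamma)$ as an associated bundle; these are equivalent descriptions of the same geometry. Second, for the connectedness of the centralizer, the paper invokes \cite[9.11]{Steinberg-End} directly (which gives connectedness once the order of the quasi-semisimple automorphism is coprime to $|\pi_1(\hat{G})|$), while you unpack this by lifting to $\hat{G}^{sc}$, applying Steinberg's theorem in the simply connected case, and descending via the vanishing of $H^1(\langle\tau\rangle,\mu)$ forced by coprimality. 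Your route is more self-contained; the paper's is terser. You are also more explicit than the paper in justifying that $\tau$ has order in $D'$ by extracting $N_s^b(g_0)=1$ from $r_D(\vp_0)=1$ and the quasi-semisimplicity of $\gamma$ --- the paper simply asserts this.
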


\begin{proof}
    By Lemma~\ref{lemma:reduce-to-ss}, it is enough to show that $X = \Sigma^{-1}(C)$ is connected. Note that $\Sigma$ restricts to a surjective $k$-morphism $X \to C$. There is a natural map $[q]\colon C \to \hat{G} \rtimes s^q$ given by the $q$th power, whose scheme-theoretic image we call $C^{[q]}$. Being a surjective morphism of homogeneous spaces for $\hat{G}$, the map $C \to C^{[q]}$ is flat. Let $H$ denote the fiber product
    \[
    \begin{tikzcd}
        H \arrow[r] \arrow[d]
            &\hat{G} \times C^{[q]} \arrow[d, "{(g, t) \mapsto (g t g^{-1},\, t)}"] \\
        C \arrow[r, "\Delta \circ {[q]}"]
            &C^{{[q]}} \times C^{{[q]}}
    \end{tikzcd}
    \]
    so $H$ is a ``universal twisted centralizer" over $C$, and the diagram shows that $H \to C$ is flat. Moreover, there is an action map $\alpha\colon H \times_C \Sigma^{-1}(C) \to \Sigma^{-1}(C)$ given by $\alpha(h, \vp)(\rm{Fr}) = h\vp(\rm{Fr})$ and $\alpha(h, \vp)(s) = \vp(s)$, which makes $\Sigma|_X$ into an $H$-torsor over $C$. By \cite[9.11]{Steinberg-End}, since $|\pi_1(\hat{G})| \in D$ and each $t \in C(k)$ is quasi-semisimple of (finite) order in $D'$, it follows that $H$ has connected fibers over $C$. Since $C$ is a single $\hat{G}$-orbit, it is in particular connected, and thus $X$ is connected.
\end{proof}

\subsection{Connectedness of the scheme of eigenvalues}\label{ss:eigenvalues}

In this section, we complete the proof of Theorem~\ref{theorem:main-general-reduced} by showing that $(\im \ov{\Sigma}_{D'})_R$ is connected. We retain the notation of Section~\ref{ss:fibral}.\smallskip

By Lemma~\ref{lemma:lifting-Aw} and Lemma~\ref{lemma:Aw-B-surjective}, we have a surjection $|\bigcup_{w \in W_0} A_w[D']_R| \to |(\im \ov{\Sigma}_{D'})_R|$, where each $A_w[D']_R$ is finite and connected. The image of each $A_w[D']_R$ in $(A/W_0)_R$ is closed, so we see $\pi_0((\im \ov{\Sigma}_{D'})_R) = W_0/{\sim}$, where $\sim$ is the equivalence relation generated by the relation $w \sim w'$ whenever the images of $A_w[D']_R$ and $A_{w'}[D']_R$ in $(A/W_0)_R$ intersect. In this section, we will show that $w \sim w'$ for all $w, w' \in W_0$, and thus $(\im \ov{\Sigma}_{D'})_R$ is connected. In fact, we will see that $A_w[D']_R \cap A_{w'}[D']_R \neq \emptyset$ for all $w, w' \in W_0$ whenever $Z(\hat{G})$ is non-smooth at at most one prime in $D'$. However, this can fail for simple groups in type A by the following example, so this case is worked out separately in Appendix~\ref{appendix:a}.

\begin{example}\label{example:type-A}
    Let $\hat{G} = \SL_6$, let $q = 7$, and let $\alpha$ be a primitive $36$th root of unity. Let $\rm{Fr}$ act trivially on $\hat{G}$, and let $s$ act as $\Ad(t)$, where $t = \diag(\alpha, \alpha, \alpha, \alpha, \alpha, \alpha^{-5})$. If $w = (1\, 2\, 3\, 4\, 5\, 6)$, we claim that $A_1 \cap A_w = \emptyset$. Indeed, if $k$ is a field then any element $(\gamma_1, \dots, \gamma_6)$ of $A'_1(k) \cap A'_w(k)$ is required to satisfy $\gamma_i^6 = \alpha^{-6}$ for all $i$ and also $\gamma_i = \gamma_j$ for all $i, j$. This is not possible for any field $k$ because there is no $6$th root of $\alpha^{-6}$ in $\mu_6(k)$ (as $6$ is not a power of $\chara k$).
\end{example}

Write $s = \Ad(t) \circ s_0$ and $\rm{Fr} = \Ad(t') \circ F_0$, where $t, t' \in \hat{T}(\cO_K[D^{-1}])$ and $s_0$ and $F_0$ preserve a pinning $(\hat{B}, \hat{T}, \{X_\alpha\})$; this is possible by \cite[XXIV, 1.3]{SGA3III}. The relation $\rm{Fr} s \rm{Fr}^{-1} = s^q$ implies $\Ad\left(\prescript{\rm{Fr}}{}{t} \cdot t' \cdot \left(\prescript{s^q}{}{t'}\right)^{-1}\right) = \Ad\left(\prod_{i=0}^{q-1} \prescript{s^i}{}{t}\right)$, so $\prescript{\rm{Fr}}{}{t} \cdot t' \cdot \left(\prescript{s^q}{}{t'}\right)^{-1} = \left(\prod_{i=0}^{q-1} \prescript{s^i}{}{t}\right) \cdot z$ for some central element $z$. In particular, $[\prescript{\rm{Fr}}{}{t}] \cdot [t^{-q}] = [z]$, where we use $[x]$ to denote the image of $x$ in $A$. Since the action is $D$-tame, we have $t^b \in Z(\hat{G})(\cO_K[D^{-1}])$ for some $b \in D'$. Writing $t^b = z_0 z_1$ as a product of central elements, where $z_0$ has order in $D$ and $z_1$ has order in $D'$, we may pass to a translate of $t$ by a $b$th root of $z_0^{-1}$ to assume that $[z]$ is of finite order in $D'$.

\begin{lemma}\label{lemma:reduce-to-central}
    With notation as above, we have $A_w = [t]^{-1} \cdot \left\{a \in A\colon \left(\prescript{\rm{Fr^{-1}}}{}{a}\right)^q\left(\prescript{w}{}{a}\right)^{-1} = \left[\prescript{\rm{Fr}^{-1}}{}{z}^{-1}\right]\right\}$.
\end{lemma}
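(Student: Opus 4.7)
The statement is a concrete change-of-variables identity on the abelian group scheme $A = \hat{T}/(1-s)\hat{T}$, so I would attack it by a direct computation: substitute $a \mapsto [t]^{-1} a'$ in the equation defining $A_w$ and use the two structural identities already available — the pinning decomposition $s = \Ad(t) \circ s_0$ and the relation $[\prescript{\rm{Fr}}{}{t}] \cdot [t^{-q}] = [z]$ derived just above the lemma.

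\textbf{Step 1: rewrite the twisted Weyl action in terms of the ordinary one.} Since $s_0$ preserves the pinning $(\hat{B}, \hat{T}, \{X_\alpha\})$, every $w \in W_0$ admits a lift $n_0 \in N_{\hat{G}}(\hat{T})$ with $\prescript{s_0}{}{n_0} = n_0$ (e.g.\ the product of the $s_0$-orbits of the standard pinned simple reflection lifts). Using $s = \Ad(t) \circ s_0$ one computes $\prescript{s}{}{n_0} = n_0 \cdot (\prescript{w^{-1}}{}{t}) \cdot t^{-1}$, from which
\[
n_0 \cdot a \cdot (\prescript{s}{}{n_0})^{-1} = \prescript{w}{}{a} \cdot \prescript{w}{}{[t]} \cdot [t]^{-1} \qquad \text{in } A,
\]
where $\prescript{w}{}{(-)}$ denotes the ordinary (untwisted) Weyl action and $[\,\cdot\,]$ the image in $A$. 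In particular, the defining condition $a \in A_w$ is equivalent to
\[
(\prescript{\rm{Fr}^{-1}}{}{a})^q = \prescript{w}{}{a} \cdot \prescript{w}{}{[t]} \cdot [t]^{-1}.
\]

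\textbf{Step 2: express $(\prescript{\rm{Fr}^{-1}}{}{[t]})^q$ via $z$.} Applying $\prescript{\rm{Fr}^{-1}}{}{(-)}$ to the identity $[\prescript{\rm{Fr}}{}{t}] \cdot [t]^{-q} = [z]$ (already established in the passage above the lemma) gives $[t] \cdot (\prescript{\rm{Fr}^{-1}}{}{[t]})^{-q} = [\prescript{\rm{Fr}^{-1}}{}{z}]$, i.e.\
\[
(\prescript{\rm{Fr}^{-1}}{}{[t]})^q = [t] \cdot [\prescript{\rm{Fr}^{-1}}{}{z}^{-1}].
\]

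\textbf{Step 3: combine.} Writing $a = [t]^{-1} a'$ and substituting into the equation of Step 1, the factor $\prescript{w}{}{[t]} \cdot [t]^{-1}$ cancels on the right (since $\prescript{w}{}{([t]^{-1}a')} = \prescript{w}{}{[t]}^{-1} \cdot \prescript{w}{}{a'}$ and $A$ is abelian), while the Step 2 identity rewrites the left-hand side as $[t]^{-1} \cdot [\prescript{\rm{Fr}^{-1}}{}{z}] \cdot (\prescript{\rm{Fr}^{-1}}{}{a'})^q$. Dividing through by $[t]^{-1}$ yields precisely $(\prescript{\rm{Fr}^{-1}}{}{a'})^q \cdot (\prescript{w}{}{a'})^{-1} = [\prescript{\rm{Fr}^{-1}}{}{z}^{-1}]$, establishing the claimed equality of subschemes.

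The only genuine content is Step 1, comparing twisted and untwisted Weyl actions on $A$; once that identity is in hand, Steps 2 and 3 are mechanical. The minor subtlety to watch is the existence of an $s_0$-fixed lift $n_0$ of $w$, which uses the pinning (and is the reason the reduction to $s = \Ad(t) \circ s_0$ with $s_0$ pinning-preserving was set up in the first place).
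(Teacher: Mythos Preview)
Your proof is correct and follows essentially the same computation as the paper's: both rewrite the twisted $W_0$-action on $A$ via an $s_0$-fixed lift of $w$ to obtain $n_0 a(\prescript{s}{}{n_0})^{-1}=\prescript{w}{}{a}\cdot\prescript{w}{}{[t]}\cdot[t]^{-1}$, then combine with $[\prescript{\rm{Fr}}{}{t}][t]^{-q}=[z]$ and the substitution $a=[t]^{-1}a'$. Two small remarks: the paper reduces to $\ov{\bQ}$-points (flatness plus generic \'etaleness) whereas you argue functorially, which is fine; and your parenthetical construction of the $s_0$-fixed lift is imprecise --- what you want is Tits' canonical section $w\mapsto n_w$ built from the pinned simple-reflection lifts, which satisfies $\prescript{s_0}{}{n_w}=n_{\prescript{s_0}{}{w}}$ (this is exactly the paper's citation of \cite[1.15]{Digne-Michel}).
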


\begin{proof}
    Let $A'_w = \left\{a \in A\colon \left(\prescript{\rm{Fr^{-1}}}{}{a}\right)^q\left(\prescript{w}{}{a}\right)^{-1} = \left[\prescript{\rm{Fr}^{-1}}{}{z}^{-1}\right]\right\}$. We need only check the equality $A_w = [t]^{-1} \cdot A'_w$ of generically \'etale flat closed subschemes of $A$ on $\ov{\bQ}$-points. By \cite[1.15]{Digne-Michel}, we may choose some $n \in N_{\hat{G}}(\hat{T})(\ov{\bQ})$ lifting $w$ which is fixed by $s_0$. We have then $n\left(\prescript{s}{}{n}\right)^{-1} = ntn^{-1}t^{-1} = \prescript{w}{}{t} \cdot t^{-1}$, so if $a \in A_w(\ov{\bQ})$ then
    \[
    \prescript{\rm{Fr}^{-1}}{}{\left([t]a\right)}^{q} = [t] \cdot \left[\prescript{\rm{Fr}^{-1}}{}{z}^{-1}\right] \cdot \prescript{w}{}{a} \cdot [\prescript{w}{}{t}] \cdot [t^{-1}].
    \]
    This shows $[t]a \in A'_w(\ov{\bQ})$. Moreover, the calculation can be run in reverse to show that if $a' \in A'_w(\ov{\bQ})$ then $[t]^{-1}a' \in A_w(\ov{\bQ})$.
\end{proof}

Lemma~\ref{lemma:reduce-to-central} shows in particular that $\sim$ only depends on $D$, $z$, and th e action of $W_F$ on $A$.

\begin{prop}\label{prop:eigenvalues-connected}
    If $\hat{G}$ is semisimple and simply connected, then $(\im \ov{\Sigma}_{D'})_R$ is connected.
\end{prop}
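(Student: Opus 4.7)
The plan is to force the equivalence relation $\sim$ on $W_0$ introduced in this section to be trivial by producing, for some field $k$ over $R$, a single element $a_0 \in A(k)$ that lies in $A_w[D'](k)$ for every $w \in W_0$ simultaneously. By Lemma~\ref{lemma:reduce-to-central}, the element $[t]^{-1}$ lies in $A_w(k)$ for every $w$ precisely when $[z] = 1$ in $A(k)$; the adjustment of $t$ preceding that lemma already arranges that $[t]^{-1}$ has order in $D'$, so this choice of $a_0$ automatically lies in $A_w[D'](k)$ once $[z]$ is arranged to vanish.

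I would first reduce to the case that $\hat{G}$ is simple and simply connected. Decomposing $\hat{G}$ into $W_F$-orbits of simple factors and applying a Shapiro-type argument on the cocycle scheme (using that $\hat{G}$ is a $W_F$-induction of a simple simply connected factor) yields such a reduction. The type $A$ case is deferred to Appendix~\ref{appendix:a}, since Example~\ref{example:type-A} rules out the common-point strategy there. In every other simple type, $Z(\hat{G})$ has order a prime power $\ell_0^k$ (possibly with $k=0$), so $[z]$ has $\ell_0$-power order.

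The remaining argument is a dichotomy on $\ell_0$. If $\ell_0 \in D$, then the order of $[z]$ divides both $\ell_0^k$ and some element of $D'$, hence equals $1$, so $[z] = 1$ already over $R$ and I take $k = \Frac(R)$. If instead $\ell_0 \in D'$, then $\ell_0$ is a non-unit of $R$, so there is a prime ideal $\fp \subset R$ containing $\ell_0$; taking $k$ to be an algebraic closure of $k(\fp)$ produces a field of characteristic $\ell_0$ in which $\mu_{\ell_0^\infty}(k) = 1$, so the diagonalizable group $A_k$ has no nontrivial $\ell_0$-power torsion $k$-points, and the $\ell_0$-power order section $[z]$ dies in $A(k)$. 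In either case, $a_0 = [t]^{-1}$ is a common $k$-point of all the $A_w[D']_R$, forcing $w \sim w'$ for every pair $w, w' \in W_0$; hence $(\im \ov{\Sigma}_{D'})_R$ is connected.

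The main obstacle is really the type $A$ case itself: for $\hat{G} = \SL_n$ the order $n$ of $Z(\hat{G}) = \mu_n$ can be divisible by several distinct primes, so the above dichotomy collapses and, as Example~\ref{example:type-A} shows, there need not exist any common point at all. That case is handled by the more delicate chain-of-intersections argument in the appendix. A secondary, routine subtlety is the Shapiro-style reduction when $W_F$ permutes simple factors of $\hat{G}$ nontrivially, where one needs to check that the decomposition of $A$, $W_0$, and the distinguished central element $[z]$ all transport correctly to the stabilizer group.
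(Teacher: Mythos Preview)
Your proposal is essentially the same as the paper's proof, with one unnecessary detour. The paper also decomposes $\hat{G}$ into $W_F$-orbits of simple factors, but then does \emph{not} invoke a Shapiro-type argument to pass to a single simple factor. Once the simple factors are permuted transitively by $W_F$ they are all isomorphic to some fixed simple $H$; if $H$ is not of type~A then $|Z(H)|$ is a prime power $\ell_0^k$, hence so is $|Z(\hat{G})| = |Z(H)|^m$, and your common-point argument via $a_0 = [t]^{-1}$ and the vanishing of $[z]$ modulo $\ell_0$ (or automatically when $\ell_0 \in D$) goes through directly on $\hat{G}$ itself. The Shapiro step you propose would require checking that $A$, $W_0$, the schemes $A_w[D']$, and the class $[z]$ all transport correctly along the restriction map of Lemma~\ref{lemma:shapiro}, which concerns cocycle schemes rather than $\im\ov{\Sigma}_{D'}$; this is not as routine as you suggest, and in any case is not needed outside type~A. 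The paper's case split is also phrased slightly more sharply: rather than ``type~A versus not'', it asks whether $|Z(\hat{G})|$ has at most one prime factor in $D'$ (common-point argument) or at least two (which forces type~A, then defer to Appendix~\ref{appendix:a}), thereby absorbing some type-A cases with $n$ a prime power into the easy branch.
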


\begin{proof}
    By \cite[XXIV, 5.3, 5.5]{SGA3III}, there is a unique direct product decomposition $\hat{G} \cong \prod_{i \in I} \hat{G}_i$ for semisimple $\cO_K[D^{-1}]$-group schemes $\hat{G}_i$ with absolutely simple fibers. The action of $W_F$ on $\hat{G}$ permutes the $\hat{G}_i$, so $W_F$ acts on $I$ and we may write $I = \coprod_{j=1}^k I_j$ where each $I_j$ is a single $W_F$-orbit. If $\hat{G}_{I_j} \coloneqq \prod_{i \in I_j} \hat{G}_i$, then we have $\underline{\rm{Z}}^1(W_F^0/P_F, \hat{G})^{r_D = 1} \cong \prod_{j=1}^k \underline{\rm{Z}}^1(W_F^0/P_F, \hat{G}_{I_j})^{r_D = 1}$. Since each $\underline{\rm{Z}}^1(W_F^0/P_F, \hat{G}_{I_j})^{r_D = 1}$ admits a section over $\cO_K[D^{-1}]$ (namely, the trivial cocycle), we may thus reduce from $\hat{G}$ to $\hat{G}_{I_j}$ to assume that $W_F$ permutes the simple factors of $\hat{G}$ transitively. In particular, the simple factors of $\hat{G}$ are pairwise isomorphic.\smallskip
    
    Supppose that the center of $\hat{G}$ is smooth over $\cO_K[(D \cup \{\ell\})^{-1}]$ for some prime $\ell \in D'$, so $[z]$ is of $\ell$-power order. If $\fm$ is a maximal ideal of $R$ containing $\ell$, then Lemma~\ref{lemma:reduce-to-central} shows $t^{-1} \in A_w[D'](R/\fm)$ for all $w \in W_0$, so $W_0/{\sim}$ is a singleton. On the other hand, if $|Z(\hat{G})|$ is divisible by at least two primes in $D'$, then the simple factors of $\hat{G}$ are all isomorphic to $\SL_n$ for some $n$, and thus Lemma~\ref{lemma:type-a-weyl-equivalence} shows that $\pi_0((\im \ov{\Sigma}_{D'})_R) \cong W_0/{\sim}$ is a singleton, as desired.
\end{proof}

\begin{proof}[Proof of Theorem~\ref{theorem:main-general-reduced}]
    By Lemma~\ref{lemma:reduction-sssc}, we may and do pass from $\hat{G}$ to the universal cover of the derived group $\sD(\hat{G})$ to assume that $\hat{G}$ is semisimple and simply connected. By Lemma~\ref{lemma:reduction-fixed-jordan} and Lemma~\ref{lemma:connected-fibers}, the natural map $\pi_0(\underline{\rm{Z}}^1(W_F^0/P_F, \hat{G})_R) \to \pi_0((\im \ov{\Sigma})_R)$ is bijective. By Proposition~\ref{prop:eigenvalues-connected}, the scheme $(\im \ov{\Sigma}_{D'})_R$ is connected, so we are done.
\end{proof}

\begin{remark}
    The proof of Proposition~\ref{prop:eigenvalues-connected} (and thus Theorem~\ref{theorem:main-general}) can be made uniform for $\hat{G}$ such that $Z(\hat{G})_{\cO_K[(D \cup \{\ell\})^{-1}]}$ is smooth for some $\ell \not\in D$, and hence (by splitting into simple factors) if $\hat{G}$ has no simple factors of type A. In the reduction from Theorem~\ref{theorem:main-general} to Theorem~\ref{theorem:main-general-reduced}, one replaces $\hat{G}$ by the identity component $\hat{G}'$ of the group of fixed points of the action of a finite group of order in $D$ on $\hat{G}$, and it may happen that $\hat{G}'$ has simple factors of type A. However, if $Z(\hat{G})_{\cO_K[(D \cup \{\ell\})^{-1}]}$ is smooth for some $\ell \not\in D$, then \cite[3.11]{DHKM} or \cite[3.2, 5.5]{Cotner-Springer} can be used to show that $Z(\hat{G}')_{\cO_K[(D \cup \{\ell\})^{-1}]}$ is also smooth, so a trick using z-extensions and an analogue of \cite[4.27]{DHKM} over $R$ (akin to the argument in Section~\ref{ss:sc}) would allow one to prove Conjecture~\ref{conj:4.3} uniformly away from type A.
\end{remark}

\appendix

\section{Type A}\label{appendix:a}

In this section, we retain the notation of Section~\ref{section:proof}, especially \ref{ss:eigenvalues}. We will assume that $\hat{G} \cong \SL_n^m$ and that $W_F^0/P_F$ permutes the simple factors of $\hat{G}$ transitively. Further, $\hat{T}$ is the diagonal torus and $\hat{B}$ is the upper-triangular Borel. Our aim is to show that $w \sim w'$ for all $w, w' \in W_0$, which will involve constructing a sequence $w = w_0, w_1, \dots, w_N = w'$ such that $A_{w_i} \cap A_{w_{i+1}} \neq \emptyset$ for all $i$. We begin with the first nontrivial example, illustrating part of our algorithm for constructing $w_i$.

\begin{example}\label{example:type-A-good}
    We retain the notation and definitions of Example~\ref{example:type-A}. Although $A_w \cap A_1 = \emptyset$, we will show that $w \sim 1$. Let $w' = (1 \, 2 \, 3)(4 \, 5 \, 6)$, and let $\beta = \alpha^6$. Because a 6th root of unity in $\ov{\bF}_2$ is automatically a 3rd root of unity, a short calculation using Lemma~\ref{lemma:reduce-to-central} shows that the element $t^{-1}\diag(1, \beta, \beta^2, \beta^3, \beta^4, \beta^5)$ lies in $(A_w \cap A_{w'})(\ov{\bF}_2)$. (In fact, $A_w$ and $A_{w'}$ \textit{only} intersect modulo $2$.) If $w'' = (1 \, 2\, 3)$, then in fact $(A_{w'} \cap A_{w''})(\ov{\bQ}) \neq \emptyset$: this follows from a general argument using Lemma~\ref{lemma:reduction-helper} (in the case $a = 1$ and $\sigma_{w} = w$ for all $w$), but one can explicitly compute that $t^{-1}\diag(\alpha^{11}, \alpha^{-1}, \alpha^{23}, \alpha, \alpha, \alpha)$ lies in this intersection. Moreover, $1 \in (A_{w''} \cap A_1)(\ov{\bQ})$. This gives a chain which proves $w \sim 1$ in this case.
\end{example}

Example~\ref{example:type-A-good} gives the model for our argument below: break apart $n$-cycles by specializing modulo a well-chosen prime, then cut off individual cycles by finding intersections over $\ov{\bQ}$. In order to deal with actions of $W_F^0/P_F$ which are more complicated than the one in Example~\ref{example:type-A-good}, the general argument requires more notation, which we now begin to introduce.\smallskip

Let $I_1, \dots, I_a$ be the $s$-orbits on $\{1, \dots, m\}$. Since $W_F^0/P_F$ permutes the simple factors of $\hat{G}$ transitively, it follows that $\rm{Fr}$ permutes the $I_i$ transitively. By reordering the $I_i$, we may and do assume that $\rm{Fr}$ sends each $I_{i+1}$ to $I_i$, where we write $I_{a + 1} = I_1$. We first note that we may reduce to the case $a = 1$.

\begin{lemma}\label{lemma:shapiro}
    Let $F_a$ denote the unramified extension of $F$ of degree $a$. The natural restriction map $r\colon \underline{\rm{Z}}^1(W_F^0, \hat{G}) \to \underline{\rm{Z}}^1(W_{F_a}^0, \hat{G}_{I_1})$ is smooth with geometrically connected fibers.
\end{lemma}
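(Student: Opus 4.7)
The plan is to realize $r$ as the projection from a trivial smooth bundle via an explicit Shapiro-type construction. The key observation is that $W_{F_a}^0$ is exactly the stabilizer of $I_1$ in $W_F^0$ (since $I_F^0$ fixes each $I_j$ setwise and $\rm{Fr}$ cycles the $I_j$), so $\hat{G}_{I_1}$ is naturally a $W_{F_a}^0$-group and $\hat{G}$ is the $W_F^0$-group coinduced from $\hat{G}_{I_1}$.

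I would make this concrete at the level of $R$-points for each $\cO_K[D^{-1}]$-algebra $R$. To a cocycle $\varphi \colon W_F^0 \to \hat{G}(R)$ associate the pair $(\psi,(g_2,\ldots,g_a))$, where $\psi$ is the projection of $\varphi|_{W_{F_a}^0}$ to $\hat{G}_{I_1}$ and $g_j$ is the projection of $\varphi(\rm{Fr})$ to $\hat{G}_{I_j}$ for $2 \le j \le a$. In the opposite direction, given such data, first set
\[
g_1 \coloneqq \psi(\rm{Fr}^a)\,\bigl(\prescript{\rm{Fr}^{a-1}}{}{g_a}\bigr)^{-1}\cdots\bigl(\prescript{\rm{Fr}}{}{g_2}\bigr)^{-1} \in \hat{G}_{I_1}(R);
\]
next define the sought components $\psi_j \colon W_{F_a}^0 \to \hat{G}_{I_j}(R)$ of $\varphi|_{W_{F_a}^0}$ inductively by $\psi_1 \coloneqq \psi$ and
\[
\psi_{j+1}(w) \coloneqq \prescript{\rm{Fr}^{-1}}{}{\bigl(g_j^{-1}\,\psi_j(\rm{Fr}\,w\,\rm{Fr}^{-1})\,\prescript{\rm{Fr}\,w\,\rm{Fr}^{-1}}{}{g_j}\bigr)},
\]
as forced by applying the cocycle identity to $\rm{Fr}\,w = (\rm{Fr}\,w\,\rm{Fr}^{-1}) \cdot \rm{Fr}$; finally, extend uniquely to $\varphi \colon W_F^0 \to \hat{G}(R)$ using the coset decomposition $W_F^0 = \bigsqcup_{0 \le i < a}\rm{Fr}^i\,W_{F_a}^0$ and the cocycle relation. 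A direct computation confirms that each $\psi_j$ is a cocycle and that the loop-closure condition $\psi_{a+1} = \psi_1$ reduces, via the cocycle identity for $\psi$ applied to $\rm{Fr}^a\,w = (\rm{Fr}^a\,w\,\rm{Fr}^{-a}) \cdot \rm{Fr}^a$, to the single equation $g_1 \cdot \prescript{\rm{Fr}}{}{g_2}\cdots\prescript{\rm{Fr}^{a-1}}{}{g_a} = \psi(\rm{Fr}^a)$, which holds by the choice of $g_1$.

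Since this construction is functorial in $R$, it gives an isomorphism of $\cO_K[D^{-1}]$-schemes
\[
\underline{\rm{Z}}^1(W_F^0,\hat{G}) \xrightarrow{\sim} \underline{\rm{Z}}^1(W_{F_a}^0,\hat{G}_{I_1}) \times_{\cO_K[D^{-1}]} \prod_{j=2}^{a} \hat{G}_{I_j}
\]
under which $r$ corresponds to projection onto the first factor. Because $\prod_{j=2}^{a} \hat{G}_{I_j}$ is a split reductive group scheme over $\cO_K[D^{-1}]$, hence smooth with geometrically connected fibers, this projection, and therefore $r$, is smooth with geometrically connected fibers. The only real work is the routine verification of the cocycle axiom for the recursively defined $\psi_j$ together with the loop-closure calculation; neither poses a substantive obstacle, so this is the step to carry out carefully rather than the one to worry about.
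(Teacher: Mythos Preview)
Your proposal is correct and follows essentially the same approach as the paper. The paper's proof simply invokes \cite[5.13]{DHKM} to conclude that $r$ is isomorphic to the first projection of $\underline{\rm{Z}}^1(W_{F_a}^0,\hat{G}_{I_1}) \times \hat{G}_{I_1}^{a-1}$; your explicit Shapiro-type construction is exactly an unpacking of that argument, with the cosmetic difference that you parameterize the fiber as $\prod_{j=2}^{a}\hat{G}_{I_j}$ rather than $\hat{G}_{I_1}^{a-1}$ (these being isomorphic since $\rm{Fr}$ identifies all the $\hat{G}_{I_j}$).
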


\begin{proof}
    The proof of \cite[5.13]{DHKM} can be read directly with only very minor adaptations to show that $r$ is isomorphic to the projection map of $\underline{\rm{Z}}^1(W_{F_a}^0, \hat{G}_{I_1}) \times \hat{G}_{I_1}^{a-1}$ onto its first factor, where $\hat{G}_{I_1}$ is the product of the factors of $\hat{G}$ with index in $I_1$.
\end{proof}

Note that $s^m$ induces an automorphism of each factor of $\hat{G}$; we denote its image in the group of outer automorphisms of $\SL_n$ by $\eps(s)$, and note that it is independent of the choice of factor. The relation $\rm{Fr} s \rm{Fr}^{-1} = s^q$ implies that $\eps(s) = \eps(s)^q$, so $\eps(s)$ is trivial if $q$ is even. If $S$ denotes the diagonal torus of $\SL_n$, then $\eps(s)$ induces an automorphism of $S$ by identifying $S$ as the group of coinvariants of $S^m$ under the permutation action induced by $s$. We have $A = \hat{T}/(1 - s)\hat{T} = S/(1 - \eps(s))S$. Similarly, $\rm{Fr}$ induces an automorphism of $S$, and we denote its image in the group of outer automorphisms by $\eps(\rm{Fr})$. When the context makes it unambiguous, we will also use $\eps(s)$ and $\eps(\rm{Fr})$ to denote their images under the canonical isomorphism $\Out(\SL_n) \cong \{\pm 1\}$.\smallskip

The action of $\rm{Fr}$ on $S$ is given by $(\beta_1, \dots, \beta_n) \mapsto \left(\beta_{\rm{Fr}(1)}^{\eps(\rm{Fr})}, \dots, \beta_{\rm{Fr}(n)}^{\eps(\rm{Fr})}\right)$, where we use $\rm{Fr}$ here and below to (abusively) denote the involution of $\{1, \dots, n\}$ induced by $\eps(\rm{Fr})$. Similarly, we will use $s$ to denote the permutation induced by $\eps(s)$. If $t = (\gamma_j)_{j=1}^n \in S(A)$ for a ring $A$, and $\sigma \in S_n$ is a permutation, then by definition $\prescript{\sigma}{}{t} \coloneqq (\gamma_{\sigma^{-1}(j)})_{j=1}^n$.\smallskip

Keeping the notation preceding Lemma~\ref{lemma:reduce-to-central}, we have $s = \Ad(t) \circ s_0$ and $\rm{Fr} = \Ad(t') \circ F_0$, where $t, t' \in \hat{T}(\cO_K[D^{-1}])$ and $s_0$ and $F_0$ preserve a pinning $(\hat{B}, \hat{T}, \{X_\alpha\})$, and we have
\[
\prescript{\rm{Fr}}{}{t} \cdot t' \cdot \left(\prescript{s^q}{}{t'}\right)^{-1} = \left(\prod_{i=0}^{q-1} \prescript{s^i}{}{t}\right) \cdot z
\]
for a central element $z$. Note that the map $Z(\hat{G})^s \to Z(\hat{G})^s$ given by $t \mapsto \prescript{\rm{Fr}}{}{t} \cdot t^{-q}$ has kernel of order dividing $q^a - \eps(\rm{Fr})$, as one can see by a straightforward calculation. Thus, after possibly extending $K$, we may and do alter $t$ by an element of $Z(\hat{G})^s(\cO_K[D^{-1}])$ to assume that every prime dividing the order of $z$ divides $q^a - \eps(\rm{Fr})$. Similarly, since the action of $W_F$ is $D$-tame, we may and do assume that $z$ has order in $D'$.\smallskip

If $w \in W_0$, we define $S_w$ to be the closed subscheme of $S$ consisting of those sections $t$ such that $\left(\prescript{\rm{Fr}^{-1}}{}{t}\right)^q \cdot \left(\prescript{w}{}{t}\right)^{-1} = \left[\prescript{\rm{Fr}^{-1}}{}{z}^{-1}\right]$. By Lemma~\ref{lemma:reduce-to-central}, if $S_w[b] \cap S_{w'}[b] \neq \emptyset$ for some $b \in D'$, then $w \sim w'$. In fact, it is enough to show $S_w \cap S_{w'}$ admits a torsion point: suppose $k$ is a field over $R$ and $t \in S_w(k) \cap S_{w'}(k)$ is of order $n = ab$ for $a \in D$, $b \in D'$. Let $c \in D$ be such that $ac \equiv 1 \pmod{n_0}$, and note
\[
\left(\prescript{\rm{Fr}^{-1}}{}{t}^{ac}\right)^q \cdot \left(\prescript{w}{}{t}^{ac}\right)^{-1} = \left[\prescript{\rm{Fr}^{-1}}{}{z}^{-1}\right]^{ac} = \left[\prescript{\rm{Fr}^{-1}}{}{z}^{-1}\right],
\]
so $t^{ac} \in S_w[b](k) \cap S_{w'}[b](k)$. For convenience, if $w \in W_0$ we will write $\sigma_w \coloneqq \rm{Fr}w \in W_0$. \smallskip

Having set up the necessary notation, we are now ready to begin proving that $w \sim w'$ for all $w, w' \in W_0$. By symmetry, we need only show that $w \sim \rm{Fr}$ for all $w \in W_0$.

\begin{lemma}\label{lemma:reduction-helper}
    Let $w, w' \in W_0$ be such that $\sigma_w$ commutes with $\sigma_{w'}$. If $S^{\sigma_w^{-1}\sigma_{w'}}$ is a torus, then $S_w[b] \cap S_{w'}[b] \neq \emptyset$ for some $b \in D'$.
\end{lemma}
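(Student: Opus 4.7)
The approach is to solve a twisted equation on the $\sigma$-fixed subtorus of $\hat T = S^m$, project the solution down to $A$, and then lift to a torsion element of $S$.

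First observe that as operators on $\hat T$,
\[
\sigma_w^{-1}\sigma_{w'} = (\rm{Fr}\circ w)^{-1}(\rm{Fr}\circ w') = w^{-1}\circ \rm{Fr}^{-1}\circ\rm{Fr}\circ w' = w^{-1}w',
\]
so $\sigma := \sigma_w^{-1}\sigma_{w'}$ acts on $\hat T$ as the Weyl element $w^{-1}w' \in W_0$. In particular, $\sigma$ fixes $Z(\hat G)$ pointwise, so $z \in \hat T^\sigma$. Because $W_0$ acts diagonally on $\hat T = S^m$, we have $\hat T^\sigma = (S^\sigma)^m$, which is a connected torus by the hypothesis on $S^\sigma$. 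The commutation of $\sigma_w$ with $\sigma_{w'}$ is equivalent to the commutation of $\sigma_w$ with $\sigma$, so $\sigma_w$ preserves $\hat T^\sigma$.

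Now consider the endomorphism $\tilde\Phi\colon \hat T^\sigma \to \hat T^\sigma$ defined by $\tilde\Phi(t) \coloneqq t^q \cdot \sigma_w(t)^{-1}$. On cocharacters it acts as $q\cdot\id - \sigma_w$, and since $\sigma_w$ is of finite order its eigenvalues are roots of unity, so $q\cdot\id-\sigma_w$ is invertible on $X_*(\hat T^\sigma)\otimes\bQ$. Hence $\tilde\Phi$ is a surjective isogeny with finite kernel. Over an algebraically closed field $k$ over $R$, I can therefore choose $\hat t \in \hat T^\sigma(k)$ with $\tilde\Phi(\hat t) = z^{-1}$; since $z$ is torsion of order in $D'$ and $\ker\tilde\Phi$ is finite, $\hat t$ is automatically torsion.

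Projecting to $A$, the class $[\hat t]$ lies in $A^\sigma$ and satisfies $\bar\Phi([\hat t]) = [z^{-1}]$, so $[\hat t] \in A_w$. Equating the defining equations for $A_w$ and $A_{w'}$ yields $A_w\cap A_{w'} = A_w \cap A^\sigma$, so in fact $[\hat t] \in A_w\cap A_{w'}$. To finish, I lift $[\hat t]$ to a torsion $t \in S(k)$: a general lift $t_0$ satisfies $nt_0 \in (1-\eps(s))S(k)$ for some $n$, and because the subtorus $(1-\eps(s))S$ has divisible $k$-points, $t_0$ can be corrected by an element of this subtorus to produce a torsion lift $t$ with $[t] = [\hat t]$. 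This $t$ lies in $S_w \cap S_{w'}$, and the trick in the paragraph preceding the lemma extracts an element of $S_w[b] \cap S_{w'}[b]$ for a suitable $b \in D'$. The main subtlety is the decision to solve the equation in $\hat T^\sigma$ rather than $A^\sigma$ or $S^\sigma$: the element $z$ naturally lives in $\hat T$, $A^\sigma$ may fail to be connected, and a direct attempt on $A^\sigma$ would require vanishing of $H^1(\langle\sigma\rangle, \ker\bar\Phi)$, whereas on the connected torus $\hat T^\sigma = (S^\sigma)^m$ the surjectivity of $\tilde\Phi$ is immediate.
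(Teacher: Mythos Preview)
Your core idea---restrict the map $t \mapsto t^q\cdot(\prescript{\sigma_w}{}{t})^{-1}$ to the $\sigma$-fixed subtorus and solve for a preimage of the central element---is exactly right and is what the paper does. The problem is the detour through $\hat T$ and $A$. After landing in $A$, you lift to a torsion $t \in S(k)$ and simply assert $t \in S_w \cap S_{w'}$. But $S_w$ is cut out by an equation in $S$, not in $A$: knowing $\bar\Phi([t]) = [z^{-1}]$ in $A$ only says that the two sides of the $S_w$-equation agree modulo $(1-\eps(s))S$, not that they agree in $S$. Separately, $[\hat t] \in A^\sigma$ does not force your chosen lift $t$ to lie in $S^\sigma$, and that is what you need for $\prescript{w}{}{t} = \prescript{w'}{}{t}$ and hence $t \in S_{w'}$. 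Both gaps could in principle be closed with further cohomological arguments (surjectivity of $S^\sigma \to A^\sigma$ and of $f$ on $(1-\eps(s))S$), but your proof does not address them; your closing paragraph worries about $A^\sigma$ being disconnected while overlooking the descent from $A$ back to $S$.

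The paper's argument avoids all of this by working directly on $S$. It defines $f(t) = t^q \cdot (\prescript{\sigma_w}{}{t})^{-1}$ on $S$; since $\sigma_w$ commutes with $\sigma$, $f$ preserves $S^\sigma$, and the restriction is an isogeny of the torus $S^\sigma$ (this is where the hypothesis is used). Your concern that ``$z$ naturally lives in $\hat T$'' is a red herring: by the set-up preceding the lemma, the right-hand side of the $S_w$-equation is already an element $\alpha \in Z(\SL_n) \subset S^\sigma$, so one solves $f(t) = \prescript{\rm{Fr}}{}{\alpha}$ directly with torsion $t \in S^\sigma(k)$. Such $t$ lies in $S_w$ by construction, and $\sigma(t) = t$ immediately gives $t \in S_{w'}$. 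No passage through $\hat T$ or $A$ is needed.
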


\begin{proof}
    Define a map $f\colon S \to S$ by $f(t) = t^q \cdot \left(\prescript{\sigma_w}{}{t}\right)^{-1}$. Let $k$ be an algebraically closed field over $R$ and note $(\ker f)(k)$ has no $p$-torsion, so $f$ is an isogeny. If $A$ is an $R$-algebra and $t \in S(A)$ is fixed by $\sigma_w^{-1}\sigma_{w'}$, then we have
    \[
    \prescript{\sigma_w^{-1}\sigma_{w'}}{}{f(t)} = \prescript{\sigma_w^{-1}\sigma_{w'}}{}{\left(t^q \cdot \left(\prescript{\sigma_w}{}{t}\right)^{-1}\right)} = f(t)
    \]
    since $\sigma_w$ commutes with $\sigma_{w'}^{-1} \sigma_w = w^{-1}w'$. Thus $f$ induces an isogeny of tori $S^{\sigma_w^{-1}\sigma_{w'}} \to S^{\sigma_w^{-1}\sigma_{w'}}$. In particular, since $Z(\SL_n) \subset S^{\sigma_w^{-1}\sigma_{w'}}$, it follows that there exists a torsion element $t \in S^{\sigma_w^{-1}\sigma_{w'}}(k)$ such that $t \in S_w(k)$. But $\prescript{w}{}{t} = \prescript{w'}{}{t}$, so also $t \in S_{w'}(k)$, and thus $w \sim w'$.
\end{proof}

Lemma~\ref{lemma:reduction-helper} is not enough to prove $w \sim w'$ for all $w, w' \in W_0$, but it is enough to show this for ``most" $w$ and $w'$, as we now explain. We will say $\sigma \in S_n$ is an \textit{$s$-cycle} if either $\sigma$ is a cycle or there is a cycle $c \in S_n$ such that $c$ and $\prescript{\eps(s)}{}{c}$ are disjoint and $\sigma = c \cdot \prescript{\eps(s)}{}{c}$. If $\sigma \in S_n$ is fixed by $\eps(s)$, then there is a unique decomposition $\sigma = c_1 \cdots c_r$ of $\sigma$ into disjoint $s$-cycles $c_1, \dots, c_r$. If $\sigma$ is an $s$-cycle, then we define its \textit{length} to be the number of $i$ such that $\sigma(i) \neq i$.

\begin{lemma}\label{lemma:reduce-to-trivial}
    If $\sigma_w$ is not a single $s$-cycle of length $n$, then $S_w[b] \cap S_{\rm{Fr}}[b] \neq \emptyset$ for some $b \in D'$.
\end{lemma}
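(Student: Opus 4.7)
My plan is to reduce $w$ to $1 \in W_0$ by iterating Lemma~\ref{lemma:reduction-helper}, peeling off (and, when necessary, re-attaching) $s$-cycles one at a time.

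Write the $s$-cycle decomposition $\sigma_w = c_1 \cdots c_r$. The hypothesis that $\sigma_w$ is not a single $s$-cycle of length $n$ forces every $c_i$ to have length strictly less than $n$: if $r \geq 2$ this is automatic from disjointness, and if $r = 1$ it is the content of the assumption. The key algebraic input is that for every $s$-cycle $c$ of length $k < n$, the fixed subgroup $S^c \subset S$ of $\SL_n$ is a connected subtorus. Indeed, the fixed subgroup of $c$ in the diagonal torus of $\GL_n$ is a split torus (of rank $n - k + 1$ in the single-cycle case, or $n - 2k + 2$ in the disjoint-pair case), and the $\det = 1$ cut corresponds to a single primitive character, whose kernel is connected.

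Each $c_i$ automatically lies in $W_0$ (since $s$-cycles are $\tau_s$-centralized by construction), so setting $w_1 := w c_r^{-1}$ gives $w_1 \in W_0$ with $\sigma_{w_1} = \sigma_w c_r^{-1}$ having one fewer $s$-cycle. The hypotheses of Lemma~\ref{lemma:reduction-helper} hold: $c_r$ commutes with $\sigma_w$ (being one of its cycles), and $S^{c_r}$ is a torus. Iterating this peeling $r$ times yields a chain $w = w_0 \sim w_1 \sim \cdots \sim w_r$, with $\sigma_{w_r}$ the identity permutation. If $\eps(\rm{Fr}) = 1$, then $w_r = 1$ and I am done. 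Otherwise $\eps(\rm{Fr}) = -1$ (which forces $n \geq 3$, since $\Out(\SL_2)$ is trivial), and $w_r$ is the involution $\tau_{\rm{Fr}}$; I then reverse the procedure to re-attach the 2-cycles $(i, n+1-i)$ of $\tau_{\rm{Fr}}$ one at a time. Each such 2-cycle lies in $W_0$ (being $\tau_s$-fixed, since $\tau_s = \tau_{\rm{Fr}}$ in this outer automorphism picture) and $S^{(i,n+1-i)}$ is a subtorus of dimension $n - 2 \geq 1$, so Lemma~\ref{lemma:reduction-helper} applies at each step. After $\lfloor n/2 \rfloor$ re-attachments, I reach $w_N = 1$.

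The main obstacle is upgrading the resulting chain of direct intersections $S_{w_i}[b] \cap S_{w_{i+1}}[b] \neq \emptyset$ to the single direct intersection $S_w[b] \cap S_{\rm{Fr}}[b] \neq \emptyset$ that the lemma asserts. The route I would take is to revisit the explicit Lang--Steinberg construction inside Lemma~\ref{lemma:reduction-helper}'s proof: the torsion point produced in each fixed subtorus $S^{c_i}$ or $S^{(i,n+1-i)}$ sits inside the common ambient torus $S$, and the successive isogenies can be chosen coherently so that the final torsion element simultaneously satisfies the defining equations of $S_w[b]$ and $S_{\rm{Fr}}[b]$. Alternatively, one can bypass the chain by directly solving the simultaneous system $\prescript{w}{}{t} = t$ and $t^q \prescript{\rm{Fr}}{}{t}^{-1} = [\prescript{\rm{Fr}^{-1}}{}{z}^{-1}]$ on the identity component $(S^w)^\circ$, using the cycle structure of $\sigma_w$ to guarantee that the relevant endomorphism is surjective onto the target central translation.
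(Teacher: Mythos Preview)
Your approach is essentially the paper's: iterate Lemma~\ref{lemma:reduction-helper} to walk from $w$ to $\rm{Fr}$ along a chain in $W_0$. The paper does this in two steps rather than $r$: instead of peeling off one $s$-cycle at a time, it jumps from $w$ straight to $\rm{Fr}c_1$ (using that $\sigma_w^{-1}c_1 = c_2^{-1}\cdots c_r^{-1}$ fixes the support of $c_1$ pointwise, so $S^{\sigma_w^{-1}c_1}$ is a torus), and then from $\rm{Fr}c_1$ to $\rm{Fr}$ (using that $c_1$ has length $<n$, so $S^{c_1^{-1}}$ is a torus). Your longer chain is equally valid; the only cost is a few more invocations of the same lemma.

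Two corrections. First, your re-attachment phase is unnecessary: the target in the statement is $\rm{Fr}$, not $1$. Since $\sigma_w = \rm{Fr}\cdot w$ and $\rm{Fr}$ is an involution, your terminal element $w_r$ with $\sigma_{w_r}=1$ already satisfies $w_r=\rm{Fr}$ in both cases $\eps(\rm{Fr})=\pm 1$, so you are done at that point. (Also, your justification ``$\tau_s=\tau_{\rm{Fr}}$'' is false in general, though the transpositions $(i,\,n+1-i)$ happen to lie in $W_0$ anyway.) Second, on your ``main obstacle'': you are right that the chain only yields $w\sim\rm{Fr}$, not the literal single intersection $S_w[b]\cap S_{\rm{Fr}}[b]\neq\emptyset$. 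But the paper's own proof concludes with ``$w\sim\rm{Fr}$, as desired'' via the same two-step chain, and this is all that is used downstream in Lemma~\ref{lemma:type-a-weyl-equivalence}. So there is no need to upgrade; the stated conclusion is slightly stronger than what is proved or needed.
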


\begin{proof}
    First, let $\sigma_w = c_1 \cdots c_r$ be the decomposition of $w$ into disjoint $s$-cycles of length $> 1$, so either $r \geq 2$ or $r=1$ and $c_1$ is of length $< n$. Note $w \sim \rm{Fr}c_1$ by Lemma~\ref{lemma:reduction-helper}, since $S^{\sigma}$ is a torus for any permutation $\sigma$ in $S_n$ which has a fixed point. Thus we may assume that $r = 1$. In this case, another application of Lemma~\ref{lemma:reduction-helper} shows that $w \sim \rm{Fr}$, as desired.
\end{proof}

Thus we must show that if $w \in W_0$ is a single $s$-cycle of length $n$, then there is some $w' \in W_0$ which is not a single $s$-cycle of length $n$ such that $w \sim w'$. We begin with an elementary lemma for which we do not know a reference.

\begin{lemma}\label{lemma:elementary-congruence}
    Let $n_0, e, q \geq 1$ be integers such that every prime number dividing $n_0$ divides $q - 1$.
    \begin{enumerate}
        \item $\sum_{i = 0}^{n_0 e - 1} q^i \equiv 0 \pmod{n_0}$,
        \item $\sum_{i = 0}^{n_0 e - 1} (n_0 e - 1 - i) q^i \equiv 0 \pmod{n'_0}$, where $n_0' = n_0$ for $n_0$ odd and $n_0' = n_0/2$ for $n_0$ even.
    \end{enumerate}
\end{lemma}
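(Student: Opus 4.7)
The plan is to put both sums in closed form so that the statements become $\ell$-adic valuation estimates, and then handle each prime $\ell \mid n_0$ with Lifting the Exponent (LTE) and binomial estimates.

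For (1), write $N = n_0 e$ and use $\sum_{i=0}^{N-1} q^i = (q^N - 1)/(q - 1)$. The claim reduces to $v_\ell(q^N - 1) \geq v_\ell(n_0) + v_\ell(q - 1)$ for every prime $\ell \mid n_0$, and by hypothesis $\ell \mid q - 1$, so LTE applies. For $\ell$ odd this is immediate: $v_\ell(q^N - 1) = v_\ell(q-1) + v_\ell(N) \geq v_\ell(q-1) + v_\ell(n_0)$. For $\ell = 2$ (where necessarily $q$ is odd), the case $q \equiv 1 \pmod 4$ or $N$ odd is routine; when $q \equiv 3 \pmod 4$ and $N$ is even, the $2$-adic version of LTE supplies an extra factor $v_2(q+1) \geq 2$ that compensates for the $-1$ in the formula.

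For (2), let $T_N = \sum_{i=0}^{N-1}(N-1-i)q^i$. A short manipulation gives $(q-1)T_N = \sum q^i - N$, and expanding the right side in powers of $x := q - 1$ via the binomial theorem yields the clean identity
\[
T_N = \sum_{k=0}^{N-2} \binom{N}{k+2} x^k.
\]
I would prove $n_0' \mid T_N$ term by term. The leading term $\binom{N}{2} = N(N-1)/2$ is the source of the loss of a factor of $2$: for odd $\ell \mid n_0$ we have $v_\ell(\binom{N}{2}) = v_\ell(N) \geq v_\ell(n_0)$ (using $\ell \nmid N - 1$), while for $\ell = 2 \mid n_0$ we only get $v_2(\binom{N}{2}) = v_2(N) - 1 \geq v_2(n_0')$. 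For $k \geq 1$, combine the factorization $\binom{N}{k+2} = \tfrac{N}{k+2}\binom{N-1}{k+1}$ with $v_\ell(x) \geq 1$ to get $v_\ell(\binom{N}{k+2} x^k) \geq v_\ell(n_0) + k - v_\ell(k+2)$ (respectively $\geq v_\ell(n_0') + k - v_2(k+2) + 1$ when $\ell = 2$), and then verify the elementary inequality $k \geq v_\ell(k+2) - [\ell = 2]$, which holds by direct check for small $k$ and because $v_\ell(k+2) \leq \log_\ell(k+2)$ for larger $k$.

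The main obstacle is really just bookkeeping at $\ell = 2$: because the closed form $T_N = (f(q) - N)/(q-1)$ divides by $q - 1$, and $2 \mid q - 1$ whenever $2 \mid n_0$, one loses a factor of $2$ that cannot in general be recovered — this is precisely why $n_0'$ rather than $n_0$ appears in (2). The binomial expansion in $x = q - 1$ makes the loss transparent and localized to the leading term $\binom{N}{2}$, after which the remaining terms contribute more than enough $\ell$-adic valuation to conclude.
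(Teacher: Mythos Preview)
Your argument is correct and follows essentially the same route as the paper: both reduce to $\ell$-adic valuation estimates and expand in powers of $x = q - 1$. The paper first reduces to $e = 1$ and $n_0 = \ell^a$ and phrases (2) via a derivative of $(1 - (1+X)^{\ell^a})/X$ together with Legendre-type estimates, whereas your closed-form identity $T_N = \sum_{k=0}^{N-2}\binom{N}{k+2}(q-1)^k$ and the termwise bound $v_\ell\bigl(\binom{N}{k+2}\bigr) \geq v_\ell(N) - v_\ell(k+2)$ make the same computation more transparent; the content is the same, and your presentation localizes the loss of a factor of $2$ to the $k=0$ term just as the paper does.
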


\begin{proof}
    We may and do assume $e = 1$ and $n_0 = \ell^a$ for some prime number $\ell$ and some $a \geq 1$. Write $q = \ell^c d + 1$, where $\ell \nmid d$. For (1), note that
    \[
    \sum_{i=0}^{\ell^a - 1} q^i = \frac{(\ell^c d + 1)^{\ell^a} - 1}{\ell^c d}.
    \]
    It is elementary to check that if $x \equiv y \pmod{\ell^c}$, then $x^{\ell^a} \equiv y^{\ell^a} \pmod{\ell^{a+c}}$, so the displayed equation implies (1). Note that the sum in (2) is equal to the value at $X = \ell^c d$ of 
    \[
    \frac{\rm{d}}{\rm{d}x} \frac{1 - (1 + X)^{\ell^a}}{X} = \frac{-\ell^a X (1 + X)^{\ell^a - 1} + (1 + X)^{\ell^a} - 1}{X^2} = \sum_{i=2}^{\ell^a} \left({\ell^a \choose i} - \ell^a {\ell^a - 1 \choose i - 1}\right) X^{i - 2}.
    \]
    Modulo $\ell^a$, this sum is equal to $\frac{q^{\ell^a} - (1 + \ell^{a+c} d)}{q^2}$. Standard estimates using Legendre's formula show that $q^{\ell^a} \equiv 1 + \ell^{a+c} d \pmod{\ell^{a+2c}}$ if $\ell \neq 2$, and $q^{\ell^a} \equiv 1 + \ell^{a+c} d \pmod{\ell^{a + 2c - 1}}$ if $\ell = 2$. This immediately yields (2).
\end{proof}

An element of $S$ is identified with a tuple $(\gamma_j)$ with $1 \leq j \leq n$ such that $\prod_{j=1}^n \gamma_j = 1$. Note in particular that $\left[\prescript{\rm{Fr}^{-1}}{}{z}^{-1}\right]$ is identified with an $n$th root of unity $\alpha$. Note that a tuple $(\gamma_j)$ lies in $S_w$ if and only if
\[
\gamma_{\sigma_w^{-1}(j)} = \gamma_j^{\eps(\rm{Fr})q} \alpha^{-1}
\]
for all $j$.

\begin{lemma}\label{lemma:reduce-from-length-n}
    Suppose $w$ is an $s$-cycle of length $n$. There is some $w' \in W_0$ which is a product of disjoint $s$-cycles of length $< n$ such that $S_w[b] \cap S_{w'}[b] \neq \emptyset$ for some $b \in D'$.
\end{lemma}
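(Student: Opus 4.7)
The plan is to exhibit, modulo a suitable prime $\ell \in D'$, an explicit element of $S_w(\ov{\bF}_\ell) \cap S_{w'}(\ov{\bF}_\ell)$ for a well-chosen $w'$ that is a product of disjoint $s$-cycles of length strictly less than $n$; this then implies $w \sim w'$ by the reduction to torsion points in the paragraph preceding Lemma~\ref{lemma:reduction-helper}. The element I construct is a periodic sequence modelled on Example~\ref{example:type-A-good}, whose period equals the reduced order of $\alpha$ modulo $\ell$; periodicity is precisely what permits simultaneous satisfaction of the cycle-closure conditions for both $\sigma_w$ and the shorter $\sigma_{w'}$.

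If $\alpha = 1$, then the identity element of $S$ lies in every $S_w$ and we are done with $\sigma_{w'} = 1$. Otherwise set $n_0 \coloneqq \mathrm{ord}(\alpha) > 1$. Every prime divisor of $n_0$ lies in $D'$ (since $|z| \in D'$), so we may choose a prime $\ell \in D'$ dividing $n_0$; I use the preference $\ell = 2$ whenever $n_0$ is even, so that the reduced order $n_0' \coloneqq n_0/\ell^{v_\ell(n_0)}$ is odd. Setting $k \coloneqq n_0'$, we have $1 \le k < n_0 \le n$, $k \mid n$, and $k$ odd. In the core case $\eps(\rm{Fr}) = \eps(s) = 1$ with $\sigma_w = (1\,2\,\ldots\,n)$, take
\[
\sigma_{w'} \coloneqq (1\,2\,\ldots\,k)(k{+}1\,\ldots\,2k)\cdots((d{-}1)k{+}1\,\ldots\,dk), \qquad d = n/k,
\]
and define $\gamma_j \coloneqq \alpha^{(j-1) \bmod k}$. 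Since $n_0 \mid q - 1$ and $k \mid n_0$, reducing modulo $\ell$ gives $q \equiv 1 \pmod{k}$; the $S_w$-relation $\gamma_{j-1} = \gamma_j^q \alpha^{-1}$ then reads $\alpha^{(j-2) \bmod k} = \alpha^{(j-1) - 1}$, which holds in $\ov{\bF}_\ell$, and the $S_{w'}$-relations follow from the $k$-periodicity of $(\gamma_j)$. The $\SL_n$-condition $\prod_j \gamma_j = 1$ reduces to $\alpha^{n(k-1)/2} = 1$, which holds because $k \mid n$ and $k$ is odd; conceptually this is Lemma~\ref{lemma:elementary-congruence}(2) applied with $n_0$ replaced by $k$ and $e = d$, using $(n_0')' = n_0'$ when $n_0'$ is odd.

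The variant $\eps(\rm{Fr}) = -1$ proceeds identically: now $n_0 \mid q + 1$, so $-q \equiv 1 \pmod{k}$ and the twisted recurrence $\gamma_{j-1} = \gamma_j^{-q}\alpha^{-1}$ yields the same linear progression of exponents. The variant $\eps(s) = -1$ is forced into a degeneracy: computing $Z(\SL_n)^{\eps(s)} = \mu_2 \cap \mu_n$ restricts $\alpha$ to $\{\pm 1\}$, so $n_0 \le 2$; the choice $\ell = 2$ then yields $k = 1$ and $\sigma_{w'} = 1$, and the constant sequence $\gamma_j \equiv 1$ satisfies everything because $-1 \equiv 1$ in characteristic $2$ makes the relation $1 = 1/\alpha$ tautological. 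The main obstacle in the core case is the product identity: if one took $\ell \ne 2$ while $n_0$ is even, the product would acquire a parasitic factor $\alpha^{k/2} = -1$, which is precisely the obstruction tracked by the odd-vs.-even dichotomy in Lemma~\ref{lemma:elementary-congruence}(2) and by the essential use of $\ell = 2$ in Examples~\ref{example:type-A} and~\ref{example:type-A-good}.
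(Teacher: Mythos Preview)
There is a genuine gap in the core case. You assert ``$n_0 \mid q-1$'' (and later ``$-q \equiv 1 \pmod k$'' when $\eps(\rm{Fr})=-1$), but the setup only arranges the weaker condition that every \emph{prime} dividing $n_0$ divides $q-\eps(\rm{Fr})$; see the normalization of $z$ just before the definition of $S_w$. Your linear ansatz $\gamma_j=\alpha^{(j-1)\bmod k}$ satisfies the $S_w$-recurrence $\gamma_{j-1}=\gamma_j^{\eps(\rm{Fr})q}\alpha^{-1}$ only when $k\mid \eps(\rm{Fr})q-1$, which fails already for $\eps(\rm{Fr})=1$, $n_0=18$, $q=7$, $\ell=2$, $k=9$: there $\alpha$ has order $9$ in $\ov{\bF}_2$, and at $j=2$ the recurrence reads $1=\alpha^{7-1}=\alpha^6\neq 1$. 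So your candidate lies neither in $S_w(\ov{\bF}_\ell)$ nor, a fortiori, in the intersection.

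The fix is exactly what the paper does: replace the linear exponent by the geometric sum, setting $\gamma_{\sigma_w^{-j}(1)}=\alpha^{-\sum_{i=0}^{j-1}(\eps(\rm{Fr})q)^i}$. The recurrence is then tautological, and well-definedness around the cycle together with the product condition $\prod_j\gamma_j=1$ are precisely the two congruences of Lemma~\ref{lemma:elementary-congruence} (applied with the hypothesis ``every prime dividing $n_0$ divides $q-1$'', not $n_0\mid q-1$). This also forces you to organize the period around the prime-to-$\ell$ part $d$ of $n$ (rather than of $n_0$), since Lemma~\ref{lemma:elementary-congruence}(1) gives $d$-periodicity of the partial sums modulo $\ell$; the paper's $\sigma_{w'}$ is accordingly a product of $d$-cycles, and the verification that $w'\in W_0$ when $\eps(s)=-1$ and $n'=n$ requires the specific choice $\ell=2$. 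Your treatment of the degenerate cases ($\alpha=1$, and $\eps(s)=-1$ with the constant sequence in characteristic $2$) is fine, but the heart of the argument in the generic $\eps(s)=1$ case does not go through as written.
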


\begin{proof}
    If $n$ is divisible by at most one prime in $D'$, then this follows from the argument of Proposition~\ref{prop:eigenvalues-connected}, so we will assume that $n$ is divisible by at least two primes in $D'$. Let $n'$ equal $n$ if $w$ is a single cycle of length $n$, and $n/2$ if $w$ is a product of two disjoint $s$-conjugate cycles whose lengths add to $n$ (so in particular $n$ is even and $2 \in D'$). Let $\ell$ be a prime number in $D'$ dividing $n'$, and write $n' = \ell^c d$ for some $d$ such that $\ell \nmid d$. Define $w' \in W_0$ as follows: $\sigma_{w'}\sigma_w^i(1)$ is equal to $\sigma_w^{i+1}(1)$ if $d \nmid i+1$, and $\sigma_w^{i+1 - d}(1)$ if $d \mid i + 1$. If $n' = n$, then this determines $w'$; otherwise, we also require that $\sigma_{w'}\sigma_w^i(n)$ is equal to $\sigma_w^{i+1}(n)$ if $d \nmid i+1$, and $\sigma_w^{i+1 - d}(n)$ if $d \mid i+1$.\smallskip
    
    If $n' = n$, then we find
    \[
    \sigma_{w'} = (1 \,\sigma_w(1) \cdots \sigma_w^{d-1}(1))(\sigma_w^d(1) \cdots ) \cdots (\sigma_w^{(\ell^c - 1)d}(1) \cdots \sigma_w^{n - 1}(1)).
    \]
    If $n' = n/2$, then we have
    \begin{align*}
    \sigma_{w'} = (1 \,\sigma_w(1) \cdots \sigma_w^{d-1}(1))\cdots &(\sigma_w^{(\ell^c - 1)d}(1) \cdots \sigma_w^{n' - 1}(1)) \cdot \\
        \cdot &(n \cdots \sigma_w^{d - 1}(n)) \cdots (\sigma_w^{(\ell^c - 1)d}(n) \cdots \sigma_w^{n'-1}(n)).
    \end{align*}
    We will show that, if one of the conditions
    \begin{enumerate}
        \item $n' = n$ is odd,
        \item $n' = n$ and $2 \in D$,
        \item $n' = n$ and $\ell = 2 \in D'$,
        \item $n' = n/2$ is even and $\ell = 2 \in D'$,
        \item $n' = n/2$ is odd and $\ell \not\in D \cup \{2\}$,
    \end{enumerate}
    holds, then $w' \in W_0$ and $(S_w \cap S_{w'})(\ov{\bF}_\ell) \neq \emptyset$, and thus $w \sim w'$. This is enough to prove the lemma: if $n' = n/2$, then $2 \in D'$ and this follows from (4) and (5) because $n$ is divisible by at least two primes in $D'$ by assumption. If $n' = n$ is odd, then this follows from (1). If $n' = n$ is even, then this follows from (2) and (3) except possibly if $\ell = 2 \in D'$ and $n/2$ is odd, in which case we can pass from $w$ to $w'$ to reduce to the already-established case $n' = n/2$. \smallskip

    We begin by showing $w' \in W_0$. If $\eps(s) = 1$, then this is trivial, so suppose $\eps(s) = -1$. In this case, it follows that $n$ is even, since any permutation of $\{1, \dots, 2m+1\}$ fixed by the flip fixes $m+1$. Moreover, $2 \in D'$. We have
    \[
    \eps(s)\sigma_{w'}\sigma_w^i(1) =
    \begin{cases}
        &\sigma_w^{i+1}(n) \text{ if } d \nmid i+1, \\
        &\sigma_w^{i+1 - d}(n) \text{ if } d \mid i+1.
    \end{cases}
    \]
    On the other hand, if $n' = n$ then $\eps(s)(1) = n = \sigma_w^{n/2}(1)$ and thus
    \[
    \sigma_{w'}\sigma_w^i(\eps(s)(1)) =
    \begin{cases}
        &\sigma_w^{i + 1}(n) \text{ if } d \nmid i+1+\frac{n}{2}, \\
        &\sigma_w^{i + 1 - d}(n) \text{ if } d \mid i+1+\frac{n}{2}.
    \end{cases}
    \]
    If $\ell = 2$, then $d \mid \frac{n}{2}$, so we find $\eps(s)\sigma_{w'} = \sigma_{w'}\eps(s)$, i.e., $\sigma_{w'} \in W_0$, and thus $w' \in W_0$ since $\rm{Fr} \in W_0$. Thus for the rest of this proof, we will assume $\ell = 2$ whenever $\eps(s) = -1$ and $n' = n$. If instead $n' = n/2$, then the definitions immediately imply $\eps(s) \sigma_{w'} = \sigma_{w'} \eps(s)$ without restriction on $\ell$.\smallskip

    To show $w \sim w'$, suppose first that $n' = n$. If either $n$ is odd or $2 \in D$, then we choose $\ell \in D'$ arbitrarily; if $n$ is even and $2 \in D'$, then we take $\ell = 2$. By the congruences of Lemma~\ref{lemma:elementary-congruence}, there is an element $t = (\gamma_j)$ of $S_w(\ov{\bF}_\ell)$ determined by $\gamma_{\sigma_w^{-j}(1)} = \prod_{i=0}^{j-1} \alpha^{-(\eps(\rm{Fr})q)^i}$ for all $j$. The key observation is now that $\gamma_{\sigma_w^j(1)} = \gamma_{\sigma_w^{j + d}(1)}$ in $\ov{\bF}_\ell$ for all $j$: indeed, unraveling, we must show that $\prod_{i=0}^{d-1} \alpha^{\eps(\rm{Fr})q^i} = 1$ in $\ov{\bF}_\ell$. This follows from Lemma~\ref{lemma:elementary-congruence}(1), using the facts that $\alpha$ is a $\gcd(d, n_0)$th root of unity in $\ov{\bF}_\ell$ and that $\eps(\rm{Fr})q \equiv 1 \pmod{\ell'}$ for every prime $\ell'$ dividing $\gcd(d, n_0)$. This shows $t \in S_{w'}(\ov{\bF}_\ell)$. Since $t$ is torsion, we find $w \sim w'$. \smallskip

    Next, suppose $n' = n/2$. In this case, $n$ is even and $2 \in D'$. If either $\ell = 2$, or $n'$ is odd and $\ell \not\in D \cup \{2\}$, then we let $t = (\gamma_j)$ be the element of $S_w(\ov{\bF}_\ell)$ determined by $\gamma_{\sigma_w^{-j}(1)} = \gamma_{\sigma_w^{-j}(n)} = \prod_{i=0}^{j-1} \alpha^{-(\eps(\rm{Fr})q)^i}$ for all $j \geq 0$. Again, the congruences of Lemma~\ref{lemma:elementary-congruence} imply that this is well-defined, and the same argument as before implies $t \in S_{w'}(\ov{\bF}_\ell)$, allowing us to conclude.
\end{proof}

\begin{lemma}\label{lemma:type-a-weyl-equivalence}
    Suppose $\hat{G} \cong \SL_n^m$, where $W_F$ permutes the simple factors transitively. Then $w \sim w'$ for all $w, w' \in W_0$.
\end{lemma}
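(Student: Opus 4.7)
The strategy is to combine the preceding lemmas in the obvious way. By the symmetry of $\sim$ and its transitivity (it is an equivalence relation), it suffices to show that $w \sim \rm{Fr}$ for every $w \in W_0$. First I would invoke Lemma~\ref{lemma:shapiro} to reduce from the general case (with $a$ orbits $I_1,\dots,I_a$ of $s$ on $\{1,\dots,m\}$ permuted transitively by $\rm{Fr}$) to the case $a = 1$: since the restriction map $\underline{\rm{Z}}^1(W_F^0,\hat G)\to \underline{\rm{Z}}^1(W_{F_a}^0,\hat G_{I_1})$ is smooth with geometrically connected fibers, connectedness passes between the two sides, and in particular the equivalence classes of $\sim$ are preserved under this reduction. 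Henceforth I assume the $s$-orbits on the factors form a single $\rm{Fr}$-orbit.

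Given $w \in W_0$, consider the permutation $\sigma_w = \rm{Fr} w$ and its decomposition into disjoint $s$-cycles. There are two cases. If $\sigma_w$ is \emph{not} a single $s$-cycle of length $n$, then Lemma~\ref{lemma:reduce-to-trivial} produces $b \in D'$ with $S_w[b] \cap S_{\rm{Fr}}[b] \neq \emptyset$, giving $w \sim \rm{Fr}$ directly.

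If instead $\sigma_w$ \emph{is} a single $s$-cycle of length $n$, then I apply Lemma~\ref{lemma:reduce-from-length-n} to produce an element $w' \in W_0$ such that $\sigma_{w'}$ is a product of disjoint $s$-cycles of length strictly less than $n$, and such that $w \sim w'$. In particular, $\sigma_{w'}$ is not a single $s$-cycle of length $n$, so the previous case applies to $w'$: by Lemma~\ref{lemma:reduce-to-trivial}, $w' \sim \rm{Fr}$. Transitivity of $\sim$ then gives $w \sim \rm{Fr}$, as required.

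The work is already contained in the three earlier lemmas, so no new obstacle arises at this step; it is purely a matter of case-splitting on whether $\sigma_w$ is a single length-$n$ $s$-cycle and concatenating the chains of equivalences. This completes the proof of the lemma (and, in view of its use in the proof of Proposition~\ref{prop:eigenvalues-connected}, of Theorem~\ref{theorem:main-general-reduced} in type A).
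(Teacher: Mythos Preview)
Your proposal is correct and follows the paper's approach exactly: the paper's one-line proof simply says that $w \sim \rm{Fr}$ for all $w \in W_0$ by Lemmas~\ref{lemma:reduce-to-trivial} and \ref{lemma:reduce-from-length-n}, which is precisely the case-split you spell out. Two minor remarks: the reduction to $a=1$ via Lemma~\ref{lemma:shapiro} was already made in the appendix before the $S_w$ were introduced (so the paper does not re-invoke it here), and your sentence ``Henceforth I assume the $s$-orbits on the factors form a single $\rm{Fr}$-orbit'' just restates the lemma's hypothesis---what you mean after the reduction is that there is a single $s$-orbit, i.e., $a=1$.
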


\begin{proof}
    It follows from Lemma~\ref{lemma:reduce-from-length-n} and Lemma~\ref{lemma:reduce-to-trivial} that $w \sim \rm{Fr}$ for all $w \in W_0$, as desired.
\end{proof}

\bibliographystyle{halpha-abbrv}
\bibliography{bibliography}

\end{document}